\newcommand{\ds}{\displaystyle}
\newcommand{\QQ}{\mathbb{Q}}
\newcommand{\RR}{\mathbb{R}}
\newcommand{\CC}{\mathbb{C}}
\newcommand{\NN}{\mathbb{N}} 
\newcommand{\KK}{\mathbb{K}}
\newcommand{\HH}{\mathbb{H}} 
\newcommand{\DD}{\mathbb{D}}
\newcommand{\EE}{\mathbb{E}}
\newcommand{\Aff}{\mathbb{A}}
\newcommand{\Aut}{\operatorname{Aut}}  
\newcommand{\End}{\operatorname{End}}
\newcommand{\id}{\operatorname{id}} 
\newcommand{\Sp}{\operatorname{span}}
\newcommand{\Hom}{\operatorname{Hom}}
\newcommand{\e}{\epsilon}
\newcommand{\Endo}{\operatorname{End}} 
\newcommand{\op}{\operatorname{op}}
\newcommand{\Res}{\operatorname{Res}}
\newtheorem{theorem}{Theorem}[section]
\newtheorem{proposition}[theorem]{Proposition}
\newtheorem{lemma}[theorem]{Lemma}
\newtheorem{corollary}[theorem]{Corollary}
\newtheorem{definition}[theorem]{Definition}
\newtheorem{example}[theorem]{Example}
\theoremstyle{remark}
\newtheorem{remark}[theorem]{Remark}
\renewenvironment{proof}{{\noindent\bf Proof.}}{\hfill $\Box$\par\vskip3mm}
\title{Maximal Subalgebras of Finite-Dimensional Algebras}
\author{\sc Miodrag Cristian Iovanov \\   
\vspace{.5cm} {\small  University of Iowa, USA and University of Bucharest, Romania }\\
\sc Alexander Sistko \\
{\small  University of Iowa, USA}
}
\begin{document} 

\maketitle 
\date{}     

\begin{abstract}
\noindent 
We study maximal associative subalgebras of an arbitrary finite dimensional associative algebra $B$ over a field $\KK$, and obtain full classification/description results of such algebras. This is done by first obtaining a complete classification in the semisimple case, and then lifting to non-semisimple algebras. The results are sharpest in the case of algebraically closed fields, and take special forms for algebras presented by quivers with relations. We also relate representation theoretic properties of the algebra and its maximal and other subalgebras, and provide a series of embeddings between quivers, incidence algebras and other structures which relate indecomposable representations of algebras and some subalgebras via induction/restriction functors. Some results in literature are also re-derived as a particular case, and other applications are given.
\footnote{{2000 \textit{Mathematics Subject Classification}. Primary 16S99; Secondary 16G60, 16G10, 16S50, 16W20}\\
{\bf Keywords} maximal subalgebra, semisimple algebra, separable functor, separable, split, split-by-nilpotent}
\end{abstract} 

\section*{Introduction} 

Given a mathematical object, it is often natural to consider its maximal subobjects as a means of further understanding it. Maximal subalgebras of (not-necessarily associative) algebras, and in particular maximal abelian subalgebras, have classically guided such inquiry. A well-known instance of this principle arises, of course, in the structure theory of finite-dimensional semisimple Lie algebras, where a central role is played by their Cartan subalgebras: over $\CC$, these are simply maximal abelian subalgebras, as seen in the classical papers \cite{Dynk}, \cite{Ma}. Others have subsequently generalized this work, and have applied similar ideas to maximal sub-structures of other possibly non-associative algebraic structures such as Malcev algebras, Jordan algebras, associative superalgebras, or classical groups (\cite{Dynk2}, \cite{Eld1}, \cite{Eld2}, \cite{Eld3}, \cite{MZ}, \cite{Rac0}, \cite{Rac}, \cite{Rac1}). On the associative side, a well known result of Schur \cite{Sch} states that a commutative subalgebra of $M_n(\CC )$ can have dimension at most $\lfloor \frac{n^2}{4} \rfloor + 1$ , and this dimension is attained. A nice short proof of this interesting result was given later by Mirzakhani \cite{Mir}. In another related direction, Motzkin and Taussky proved that the variety of commuting $n$-by-$n$ complex matrices is irreducible in \cite{Motz1}, \cite{Motz2}, and Gerstenhaber \cite{Ger2} noted that this bounds the dimension of any $2$-generated abelian subalgebra of $M_n(\CC )$ by $n$. There has been a lot of interest in studying dimensions of certain subalgebras of matrix algebras, as well as irreducible components of matrix varieties satisfying various properties. Often, these questions are tightly related to representation theory (\cite{Bar}, \cite{Bas1}, \cite{Bas2}, \cite{Bas3}, \cite{Ger}, \cite{Ger2}, \cite{Gur2}, \cite{Gur3}, \cite{Prem}, \cite{Motz1}, \cite{Motz2}, \cite{Schr}).

The results of Schur concerning the maximal dimension of commutative subalgebras were generalized in several directions. One such extension is attributed to Jacobson \cite{Jac}, who generalized Schur's theorem to any field $\KK$. Maximal subfields of algebraically closed fields were studied by Guralnick and Miller in \cite{Gur}. At the other end, Laffey \cite{Laff} gave lower bounds for maximal abelian subalgebras of $M_n(\KK )$. In the general case of not necessarily commutative maximal subalgebras of matrix algebras, the problem was studied by Racine, who obtained a structure theorem for maximal subalgebras of associative central simple algebras \cite{Rac0}, \cite{Rac}. Using a deep theorem of Gerstenhaber from \cite{Ger}, in \cite{Ag} Agore showed that the maximal dimension of a subalgebra in $M_n(\CC)$ is $n^2-n+1$. Recently, on the infinite dimensional side, interest for maximal subalgebras arose as well in commutative algebra \cite{Mau}.  However, there does not seem to be a general classification of maximal subalgebras of finite dimensional associative algebras beyond the case of matrix algebras. 


The main result and first goal of this paper is to provide such a complete classification. More precisely, given a finite-dimensional associative algebra $B$, we wish to answer two questions: 
 
\begin{enumerate} 
\item Can we classify/describe the maximal associative subalgebras $A \subset B$? 
\item Can we determine under what conditions a maximal subalgebra $A\subset B$ shares interesting representation-theoretic data with $A$, and what can be said about such a minimal extension of algebras? 
\end{enumerate}  


For instance, as it turns out, a relevant question for (2) above will be finding conditions under which the extension $A \subset B$ is separable, split, or split-by-nilpotent. 

We first provide a general structure theorem for maximal associative subalgebras of any finite dimensional associative algebra $B$. The study proceeds in two steps: one deals first with the semisimple case, where there are essentially four types of maximal subalgebras, and then this is used to ``lift" modulo the Jacobson radical, to deal with the general case. In the semisimple case, our proofs are a blend of techniques characteristic to finite dimensional simple algebras and representation theoretic arguments. In the non-semisimple case, there are two types of subalgebras: one coming essentially from maximal subalgebras of $B/J(B)$ via pull-back, where $J(B)$ is the Jacobson radical (and these further ramify via the semisimple classification); and another one, characterized by the property that irreducible modules of $A$ and $B$ ``coincide" via restriction. These two types will give rise to examples of separable extensions and of split extensions of algebras, respectively, and both situations can be understood as particular cases of separable functors. The results take particularly nice forms when additional mild hypotheses are imposed, such that the algebra $B/J(B)$ is separable (in particular, when $\KK$ is algebraically closed, or when it is a splitting field for $B/J(B)$); in this case, the main result can be formulated as follows.

\begin{theorem}\label{t.t}
Let $B$ be a finite dimensional algebra over a field $\KK$ whose simple modules are all Schur, that is, $\End(S)=\KK$ for each simple $B$-module $S$, and let $J(B)$ be the Jacobson radical of $B$. If $A_0\subset B$ is a subalgebra such that $B=A_0\oplus J(B)$ (so $A_0\cong B/J(B)$ via the canonical projection; this exists by Wedderburn-Malcev) and if $A_0=\prod\limits_{i}M_{n_i}(\KK)$, then every maximal subalgebra of $B$ is conjugate (inside $B$) to an algebra of the following three types:\\
(a) $\left(B(k,n_i-k)\times \prod\limits_{j\neq i} M_{n_j}(\KK)\right)\oplus J(B)$, where $B(k,n_i-k)$ is the subalgebra of $M_{n_i}(\KK)$ of block upper triangular matrices with blocks of size $k$ and $n_i-k$ on the diagonal, and the parenthesis is considered as a subalgebra of $A_0$. \\
(b) $\left(\Delta^2(n_i,\KK) \times \prod\limits_{k\neq i,j} M_{n_i}(\KK)\right) \oplus J(B)$, where $n_i=n_j$ and $\Delta^2(n_i,\KK)$ is the image of the diagonal embedding $M_{n_i}(\KK)\rightarrow M_{n_i}(\KK) \times M_{n_i}(\KK)$ (here, this diagonal embedding lands in components $i,j$ of the direct product $A_0=\prod\limits_{k}M_{n_k}(\KK)$).\\
(c) $A_0\oplus H$, where $J(B)^2\subset H\subset J(B)$ and $H/J(B)^2$ is a maximal $A_0$-sub-bimodule of $J(B)/J(B)^2$.
\end{theorem}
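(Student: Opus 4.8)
The plan is to take an arbitrary maximal subalgebra $A \subset B$ and to organize everything around the canonical projection $\pi \colon B \to B/J(B) \cong A_0$, splitting into the two cases $\pi(A) \subsetneq A_0$ and $\pi(A) = A_0$. If $\pi(A) \subsetneq A_0$, then $A$ is contained in the proper subalgebra $\pi^{-1}(\pi(A)) \subsetneq B$, so maximality forces $A = \pi^{-1}(\pi(A))$; in particular $J(B) \subseteq A$ and $\pi(A)$ is a \emph{maximal} subalgebra of the semisimple algebra $A_0$. Through the splitting $B = A_0 \oplus J(B)$ this reads $A = \bar{A} \oplus J(B)$ with $\bar{A} \subset A_0$ maximal, so types (a) and (b) will follow once the semisimple case is settled.

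The substantive remaining case is $\pi(A) = A_0$, i.e. $A$ surjects onto $B/J(B)$; this is where type (c) comes from. Here $A \cap J(B)$ is a nilpotent ideal of $A$ with $A/(A \cap J(B)) \cong A_0$ semisimple, so $\rad(A) = A \cap J(B)$. Applying Wedderburn--Malcev to $A$ produces a semisimple complement $A_0'$ to $\rad(A)$ inside $A$; but $A_0'$ is then also a complement to $J(B)$ in $B$, so the conjugacy part of Wedderburn--Malcev lets me conjugate by a unit in $1 + J(B)$ to arrange $A_0 \subseteq A$, whence $A = A_0 \oplus H$ with $H := A \cap J(B)$ simultaneously a subalgebra and an $A_0$-sub-bimodule of $J(B)$. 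I would then show $J(B)^2 \subseteq H$: since $A_0 \oplus (H + J(B)^2)$ is again a subalgebra containing $A$, maximality gives either $J(B)^2 \subseteq H$ or $H + J(B)^2 = J(B)$, and in the latter case a Nakayama-type iteration (from $J(B)^2 = (H+J(B)^2)^2 \subseteq H + J(B)^3$ one gets $J(B) = H + J(B)^n$ for all $n$, hence $H = J(B)$ by nilpotency) would force $A = B$, a contradiction. With $J(B)^2 \subseteq H \subseteq J(B)$ secured, the subalgebras between $A_0 \oplus J(B)^2$ and $B$ correspond bijectively to $A_0$-sub-bimodules of $J(B)/J(B)^2$, so maximality of $A$ is exactly maximality of $H/J(B)^2$ as such a bimodule, giving type (c).

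It remains to classify maximal subalgebras $C$ of the semisimple algebra $A_0 = \prod_i M_{n_i}(\KK)$, which feeds the first case. If $C$ fails to surject onto some factor $M_{n_i}(\KK)$, then $C \subseteq \left(\prod_{j \neq i} M_{n_j}(\KK)\right) \times p_i(C)$ and maximality forces equality with $p_i(C)$ a maximal subalgebra of $M_{n_i}(\KK)$; a Burnside/Jacobson density argument shows any proper subalgebra of $M_{n_i}(\KK)$ stabilizes a nontrivial subspace $W$, hence lies in the parabolic $\mathrm{Stab}(W)$, and maximality together with a choice of basis identifies it with $B(k, n_i - k)$ --- type (a). If instead $C$ surjects onto every factor yet $C \neq A_0$, a Goursat-type analysis across the simple factors, combined with Skolem--Noether (so that every algebra isomorphism $M_n(\KK) \to M_n(\KK)$ is inner), shows $C$ must glue exactly two isomorphic factors $n_i = n_j$ along the graph of such an isomorphism; conjugating that graph to the standard diagonal yields $\Delta^2(n_i,\KK)$ times the remaining factors --- type (b).

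The main obstacle I anticipate is this semisimple classification: verifying that a proper subdirect product of simple matrix algebras which surjects onto every factor must in fact be a diagonal gluing of precisely two isomorphic factors (the Goursat step for more than two factors, using simplicity to rule out partial gluings), and separately that the maximal subalgebras of a single $M_n(\KK)$ are exactly the two-block parabolics. By comparison, the lifting in the non-semisimple case is comparatively soft once the two halves of Wedderburn--Malcev (existence and conjugacy of complements) are combined as above, and the Nakayama argument that pins down $J(B)^2 \subseteq H$ is short.
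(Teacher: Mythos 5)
Your overall architecture parallels the paper's: your dichotomy on $\pi(A)$ is the paper's Lemma \ref{l.1} (semisimple type versus split type), and your handling of the case $\pi(A)=A_0$ is correct --- Wedderburn--Malcev applied inside $A$, Malcev conjugacy in $B$ to arrange $A_0\subseteq A$, then the Nakayama-type iteration forcing $J(B)^2\subseteq H$ and the lattice correspondence between subalgebras containing $A_0\oplus J(B)^2$ and $A_0$-sub-bimodules of $J(B)/J(B)^2$. That part is, if anything, more direct than the paper's own proof of Theorem \ref{t.gen}, which instead analyzes the largest ideal of $B$ contained in $A$ and a minimal ideal above it to show that $J(A)$ is a maximal $B$-sub-bimodule of $J(B)$. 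Your type (b) step (Goursat gluing plus Skolem--Noether) is a fair sketch of the paper's Theorem \ref{t.3.1}.

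The genuine gap is in the single-block case feeding type (a). You claim that a Burnside/Jacobson density argument shows every proper subalgebra $C\subset M_{n_i}(\KK)$ stabilizes a nontrivial proper subspace. That is true only when $\KK$ is algebraically closed. Density gives $C=\End_D(V)$ where $D=\End_C(V)$, and the theorem's Schur hypothesis concerns $\End_B(S)$, not $\End_C(S)$: nothing prevents $D\supsetneq\KK$. Concretely, take $B=M_2(\RR)$ over $\KK=\RR$; then $\End_B(\RR^2)=\RR$, so $B$ satisfies the hypothesis, yet $\CC\subset M_2(\RR)$ acts irreducibly, stabilizes no line, and is maximal (an intermediate subalgebra would be a $\CC$-subspace of $\RR$-dimension $3$, which is impossible). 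This is precisely the paper's type S2 --- the centralizer of a minimal field extension of $\KK$ inside the block --- which by the paper's own Lemma \ref{l.3.1} and Remark \ref{r.ss} survives under the Schur hypothesis and disappears only over algebraically closed fields. So your argument establishes the classification only for $\KK$ algebraically closed; for a general Schur algebra $B$ the block case requires the Racine-type analysis of Section \ref{s.3}, and the list of types must then be enlarged by S2 (as in the paper's Corollary \ref{c.ss}), a type that your Burnside step silently excludes.
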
 

The theorem above follows as a consequence of Sections 2 and 3. Its version for basic algebras, stated in the language of quivers with relations, takes an even more precise form (Theorem \ref{t.basic}). As a consequence, for any finite dimensional algebra over an algebraically closed field, we find the maximal dimension $d$ of a proper subalgebra; it turns out that $d$ depends only on the smallest dimension of a simple module. The formula we obtain extends the results of \cite{Ag} to the most general case.

On the other hand, our other major motivation is representation theoretic. The study of subalgebras of certain particular classes of associative algebras is certainly not new, and is crucial in several fields: in group representation theory, for example, induction and restriction to and from subgroups is an indispensable central tool. Similarly, Hopf subalgebras of finite dimensional Hopf algebras play an important role in understanding the structure, via the Nichols-Zoeller theorem \cite{DNR}. More generally, subgroups of algebraic groups provide further examples. In fact, given a finite group $G$ and a subgroup $H$, the extension $\KK H \subseteq \KK G$ often has nice properties: it is easily seen to be separable when $H$ is a p-Sylow subgroup and ${\rm char}(\KK)=p$ (which is, essentially, Maschke's theorem), and it is also split in general, in the sense that $\KK H$ is a direct summand of $\KK G$ as $\KK H$-bimodule.  

Special classes of finite-dimensional associative algebras also possess nice subalgebras, which influence the representation theory of the full algebra. For instance, every cluster-tilted algebra can be obtained as a trivial extension of a tilted algebra \cite{AssCTA1}. Furthermore, it is known that one can obtain the tilted algebra from its cluster-tilted algebra by deleting certain arrows from the cluster-tilted algebra \cite{Ass1}. Such a process of deleting arrows has a natural interpretation as a filtration of subalgebras $C = A_0 \subset A_1 \subset \cdots \subset A_n = B$, where $C$ is tilted, $B$ is the cluster-tilted algebra corresponding to $C$, and $A_{i-1}$ is a maximal subalgebra of $A_i$ for all $i$, obtained by deleting a suitable arrow at each step. The problem of finding all such filtrations is essentially the problem of determining which tilted algebras give rise to a fixed cluster-tilted algebra, and which arrows are suitable for deletion. This problem has been solved in \cite{Ass1}, \cite{AssCTA2}. Since trivial extensions are in particular split extensions, the induction and coinduction functors between a tilted algebra and its corresponding cluster-tilted algebra also share nice properties. These have been studied in \cite{Ser}.

For general finite dimensional associative algebras, however, there does not seem to have been much work done towards understanding the representation theory via induction/restriction to subalgebras; rather, one often looks at quotient by various ideals. Perhaps the absence of a good supply of easily understood subalgebras with good properties is a reason for this. Hence, this paper is also intended to take a step in this direction; naturally, maximal subalgebras can be regarded as a first such step. Certainly, the case of induction/restriction to subalgebras can be regarded as a particular case of relating algebras via bimodules; but in general, given an algebra $A$, it is usually not straightforward to find an algebra $B$ and bimodule ${}_BM{}_A$ such that the functor $M\otimes_A(-)$ has the ``right" properties. Nevertheless, in our study, we obtain constructions that yield classes of subalgebras of associative algebras which have good representation theoretic properties and are easily described at the same time. In the last section, we provide many examples of maximal subalgebras, as well as embeddings of algebras $A\subset B$ with such relevant properties. We give examples of embeddings of quiver algebras and incidence algebras, and in particular of Dynkin quivers, and show how indecomposable representations of many ADE quivers can be obtained via induction/restriction from suitable subalgebras, which are often also ADE, thus providing relations between indecomposables of various quivers (of finite type or not). Such induction/restriction functors sometimes even produce morphisms between the representation rings, and can thus be used to relate them. While we do not attempt to create a general theory - which could go into different directions as per various types of subalgebras -  the multitude of examples and flexibility in the choices seem to suggest plenty of possible applications and a further study may be warranted.  


The paper is organized into four sections. In the first, we give background on separable functors, and related notions of split/separable extensions of algebras. In the second section, we prove several fundamental results, and define two essential types of maximal subalgebras, which we call maximal subalgebras of semisimple (or separable) type and of split type. In the third section we provide a complete classification of maximal subalgebras of semisimple algebras, and prove the full classification, and the above Theorem \ref{t.t}. Finally, in the fourth section we provide examples, illustrate the theory in several important instances, and discuss possible future investigations.  

\noindent {\bf{Acknowledgments.}} The authors would like to thank Ryan Kinser for a careful reading of a preliminary version of this paper and many useful suggestions which improved the paper; they would also and Victor Camillo encouraging discussions and suggesting a few additional references. 

\section{Split Extensions, Split-by-Nilpotent Extensions, and Separable Functors}   

\subsection{Separable Functors}
  
\noindent In this section we give some general background on separable functors, and extensions of algebras related to such functors. We will see that separable functors provide a good context in which algebras can share representation-theoretic data. Much of our exposition will follow Chapter 3 of \cite{Caen}.  

\noindent Throughout the rest of this paper, $\KK$ denotes a field, and $A \subset B$ an extension of rings. We say that $A$ is {\bf{maximal}} in $B$ if for any subalgebra $C \subset B$ such that $A \subseteq C \subseteq B$, it follows that $A = C$ or $C = B$. We denote by $J(A)$ the Jacobson radical of the algebra $A$, and $Z(A)$ the center of $A$. We will also write $C(A) = \{ b \in B \mid ba = ab$ for all $a \in A \}$ to denote the centralizer of $A$ in $B$, and $C^2(A) = \{ b \in B \mid bx = xb$ for all $x \in C(A)\}$ the bicommutant subalgebra. It is a standard fact (of Galois connections) that $C^3(A)=C(C^2(A))=C(A)$ for any $A$. Unless otherwise stated, all rings are associative $\KK$-algebras, and all $\KK$-algebras and modules are finite-dimensional over $\KK$. 

\begin{definition}  
Let $F : \mathcal{C} \rightarrow \mathcal{D}$ be a functor between categories $\mathcal{C}$ and $\mathcal{D}$. Then $F$ induces a natural transformation $\tilde{F}: \mathcal{C}(-,-) \rightarrow \mathcal{D}(F(-),F(-))$, defined by $\tilde{F}\left( X \xrightarrow[]{\alpha} Y \right) = F(X) \xrightarrow[]{F(\alpha)} F(Y)$. $F$ is called {\bf{separable}} if $\tilde{F}$ admits a natural section, i.e. a natural transformation $G : \mathcal{D}(F(-),F(-)) \rightarrow \mathcal{C}(-,-)$ with $G\circ \tilde{F} = 1_{\mathcal{C}(-,-)}$.
\end{definition}



\noindent Separable functors were first studied in \cite{Nast}. The terminology comes from the fact that an extension of rings $\varphi : A \rightarrow B$ is separable if and only if the restriction functor $\Res_{\varphi} $ is separable (Prop. 1.3.1 of \cite{Nast}.) Such functors have found applications in representation theory as a general setting for Maschke-type theorems \cite{Caen}. For the purposes of this paper, we need only a few essential facts, which we list and recall below without proof. 

\bigskip




\begin{theorem}[Rafael] 
Let $ F: \mathcal{C}\rightarrow \mathcal{D}$ have a right adjoint $G : \mathcal{C} \rightarrow \mathcal{D}$.  

\begin{enumerate} 
\item $F$ is separable if and only if the unit $\eta : 1_{\mathcal{C}} \rightarrow GF$ of the adjunction $(F,G)$ splits, in the sense that there is a natural transformation $\nu : GF \rightarrow 1_{\mathcal{C}}$ such that $\nu \circ \eta = 1_{1_{\mathcal{C}}}$, the identity transformation of $1_{\mathcal{C}}$.
\item $G$ is separable if and only if the counit $\e : FG \rightarrow 1_{\mathcal{D}}$ of the adjunction $(F,G)$ cosplits, in the sense that there is a natural transformation $\zeta : 1_{\mathcal{D}} \rightarrow FG$ such that $\e \circ \zeta = 1_{1_{\mathcal{D}}}$, the identity transformation of $1_{\mathcal{D}}$.
\end{enumerate} 
\end{theorem}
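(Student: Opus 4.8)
The plan is to deduce both statements from one mechanism: the hom-set isomorphism of the adjunction $(F,G)$ (whose right adjoint is $G:\mathcal{D}\to\mathcal{C}$) combined with the Yoneda lemma. I would carry out (1) in detail and obtain (2) by the dual argument. Note that the paper's definition of separability reuses the letter $G$ for the section; to avoid collision with the right adjoint I rename the section to $S$.

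For (1), a separable structure on $F$ is by definition a natural section $S$ of $\tilde F$, i.e.\ a family $S_{X,Y}:\mathcal{D}(FX,FY)\to\mathcal{C}(X,Y)$, natural in $X$ and $Y$, with $S_{X,Y}\circ\tilde F=\mathrm{id}$. The first step is to rewrite the target of $\tilde F$ via the adjunction isomorphism $\Phi_{X,Y}:\mathcal{D}(FX,FY)\xrightarrow{\ \sim\ }\mathcal{C}(X,GFY)$. The key computation is that $\Phi$ transports $\tilde F$ into post-composition with the unit: for $\alpha:X\to Y$ one has $\Phi_{X,Y}(F\alpha)=G(F\alpha)\circ\eta_X=\eta_Y\circ\alpha$, the last equality being exactly naturality of $\eta$. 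Thus $\Phi\circ\tilde F=(\eta_Y)_\ast$, and a natural section $S$ of $\tilde F$ is the same datum as a natural section $T_{X,Y}:=S_{X,Y}\circ\Phi_{X,Y}^{-1}$ of $(\eta_Y)_\ast:\mathcal{C}(X,Y)\to\mathcal{C}(X,GFY)$.

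The second step invokes Yoneda. Fixing $Y$, the family $T_{-,Y}$ is a morphism of representable presheaves $\mathcal{C}(-,GFY)\Rightarrow\mathcal{C}(-,Y)$, hence is post-composition with a unique morphism $\nu_Y:=T_{GFY,Y}(1_{GFY}):GFY\to Y$; that is, $T_{X,Y}(g)=\nu_Y\circ g$ for all $g$. Naturality of $T$ in $Y$ translates into naturality of $Y\mapsto\nu_Y$, yielding a natural transformation $\nu:GF\Rightarrow 1_{\mathcal{C}}$; and applying the section identity to $g=\eta_Y$ gives $\nu_Y\circ\eta_Y=T_{Y,Y}(\eta_Y)=1_Y$, i.e.\ $\nu\circ\eta=1_{1_{\mathcal C}}$. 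Every step is a bijective correspondence, so conversely any splitting $\nu$ of the unit reproduces a natural section of $\tilde F$, establishing (1).

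Part (2) is dual. Now $\tilde G_{U,V}:\mathcal{D}(U,V)\to\mathcal{C}(GU,GV)$, and one uses the adjunction in the form $\mathcal{C}(GU,GV)\cong\mathcal{D}(FGU,V)$, under which $\tilde G$ becomes pre-composition with the counit, $(\e_U)^\ast:\mathcal{D}(U,V)\to\mathcal{D}(FGU,V)$ (this time via naturality of $\e$). Fixing $U$ and applying the covariant Yoneda lemma to $\mathcal{D}(FGU,-)\Rightarrow\mathcal{D}(U,-)$ produces $\zeta_U:U\to FGU$; naturality in $U$ together with the section identity yield $\zeta:1_{\mathcal{D}}\Rightarrow FG$ with $\e\circ\zeta=1_{1_{\mathcal D}}$. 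I expect the only real difficulty to be bookkeeping: tracking naturality in both variables simultaneously and verifying the two transport identities $\Phi\circ\tilde F=(\eta_Y)_\ast$ and its counit analogue, since these are precisely what make the single-variable Yoneda lemma applicable and force the correspondence to match natural sections with splittings of the unit (resp.\ cosplittings of the counit).
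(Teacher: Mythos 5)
Your proof is correct. Note that the paper itself does not prove this statement: it is quoted as Rafael's theorem, and the ``proof'' is only a citation to Ch.\ 3.1, Thm.\ 24 of \cite{Caen}, so there is no internal argument to compare against. Your argument is, in substance, the standard proof from that reference (and from Rafael's original paper), but packaged more conceptually: where the standard proof simply writes down two formulas and checks naturality by hand, you first transport $\tilde{F}$ across the adjunction isomorphism $\Phi$ to identify it with post-composition by the unit, and then invoke Yoneda to conclude that natural sections of $(\eta_Y)_*$ are exactly splittings of $\eta$. Indeed, unwinding your definition $\nu_Y = T_{GFY,Y}(1_{GFY}) = S_{GFY,Y}\bigl(\Phi^{-1}(1_{GFY})\bigr)$ gives precisely $S_{GFY,Y}(\epsilon_{FY})$, the formula used in the cited proof, and your inverse correspondence $S_{X,Y}(f) = T_{X,Y}(\Phi(f)) = \nu_Y \circ G(f) \circ \eta_X$ is the cited proof's converse formula; so the two arguments coincide morphism-by-morphism, with Yoneda doing the bookkeeping that the reference carries out directly. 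What your packaging buys is that both implications, together with the uniqueness of $\nu$ (resp.\ $\zeta$), fall out of a single bijective correspondence rather than two separate verifications, and part (2) is then genuinely dual rather than a second computation. Two minor points in your favor: you correctly repaired the statement's typo (the right adjoint is of course a functor $G : \mathcal{D} \rightarrow \mathcal{C}$, not $\mathcal{C} \rightarrow \mathcal{D}$), and renaming the separability section to $S$ to avoid the collision with the adjoint $G$ was necessary and handled cleanly.
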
 

\begin{proof} 
See Ch. 3.1, Thm. 24 of \cite{Caen}.
\end{proof}

\noindent The following are straightforward corollaries to the general theory of separable functors, known to specialists, and they provides us with motivation for considering split and separable extensions. Such results also arise in the context of separable bimodules \cite{Caen2}. Recall that an algebra is representation finite if, up to isomorphism, there are only finitely many left (equivalently, right) indecomposable $A$-modules (in which case, every module is a direct sum of indecomposable modules). For this and other notions in the representation theory of finite dimensional algebras we refer to the well known textbooks \cite{Ass5, ARS}. In what follows, we let $(F:A{\rm-mod}\longrightarrow B{\rm-mod}, G:B{\rm-mod}\longrightarrow A{\rm-mod})$ be an adjoint pair between categories of finite dimensional modules over algebras $A$ and $B$, respectively. Denote $\operatorname{Ind}(A),\operatorname{Ind}(B)$ the sets of isomorphism classes of $A$-, and respectively, $B$-modules. We include a brief argument only  as an illustration of the theory; it also follows readily as consequence of more general results on separable bimodules and functors; see e.g. \cite{Caen,Caen2}.

\begin{lemma}\label{l.1.1}
(i) If $F$ is separable, then for each $X\in \operatorname{Ind}(A)$, there is $Y\in \operatorname{Ind}(B)$ such that $X$ is a direct summand in $G(Y)$. In particular, if $B$ is representation finite, then so is $A$. \\
(ii) Moreover, if the unit $\eta$ is an isomorphism, 
for every $X\in \operatorname{Ind}(A)$, $F(X)=Y\oplus\bigoplus\limits_{i}Y_i$ where $Y,Y_i$ are indecomposables, $X=G(Y)$ and $G(Y_i)=0$ for all $i$. If, in addition, $G$ is faithful, then $F$ induces an injective map from $\operatorname{Ind}(A)$ to $\operatorname{Ind}(B)$.
\end{lemma}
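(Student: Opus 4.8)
The plan is to reduce both parts to the splitting of the unit $\eta$ supplied by Rafael's theorem, and then to argue entirely on the level of Krull--Schmidt decompositions, which are available throughout because all modules in sight are finite-dimensional. Concretely, since $F$ is left adjoint to $G$ and $F$ is separable, Rafael's theorem gives a natural transformation $\nu\colon GF\to 1_{A\text{-mod}}$ with $\nu\circ\eta=1$. Hence for every $X$ the map $\eta_X\colon X\to GF(X)$ is a split monomorphism, so $X$ is a direct summand of $GF(X)$; this single fact drives the proof of (i).

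For part (i), I would decompose $F(X)=\bigoplus_j Y_j$ into indecomposables and apply $G$ to get $GF(X)=\bigoplus_j G(Y_j)$. The key step is then to observe that, $X$ being indecomposable and a direct summand of $\bigoplus_j G(Y_j)$, the uniqueness statement in Krull--Schmidt forces $X$ to be isomorphic to an indecomposable summand of a single $G(Y_j)$; taking $Y:=Y_j$ for that index proves the first assertion. For the ``in particular'', I would argue by counting: if $B$ is representation finite then $\operatorname{Ind}(B)$ is finite, each $G(Y)$ is finite-dimensional and so has only finitely many indecomposable summands up to isomorphism, and every $X\in\operatorname{Ind}(A)$ occurs among these summands by the first assertion; hence $\operatorname{Ind}(A)$ is finite.

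For part (ii), assume in addition that $\eta$ is an isomorphism, so that $GF(X)\cong X$ for all $X$. Fixing an indecomposable $X$ and decomposing $F(X)=\bigoplus_k Z_k$ into indecomposables, I obtain $\bigoplus_k G(Z_k)\cong X$. Since $X$ is indecomposable and nonzero, discarding the zero terms and applying Krull--Schmidt shows that exactly one summand $G(Z_{k_0})$ is nonzero, with $G(Z_{k_0})\cong X$, while $G(Z_k)=0$ for all $k\neq k_0$ (a direct sum of two nonzero modules is decomposable, so at most one nonzero term can occur). Renaming $Y:=Z_{k_0}$ and relabelling the remaining summands as $Y_i$ gives $F(X)=Y\oplus\bigoplus_i Y_i$ with $G(Y)\cong X$ and $G(Y_i)=0$, as claimed.

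Finally, suppose $G$ is faithful. Here I would use the standard fact that a faithful functor reflects zero objects: if $G(Z)=0$ then $G(\operatorname{id}_Z)=0=G(0_Z)$, whence $\operatorname{id}_Z=0$ and so $Z=0$. Applying this to the indecomposable (hence nonzero) modules $Y_i$, which satisfy $G(Y_i)=0$, shows that no $Y_i$ can occur, so $F(X)=Y$ is indecomposable. Therefore $X\mapsto F(X)$ is a well-defined map $\operatorname{Ind}(A)\to\operatorname{Ind}(B)$, and it is injective because $G(F(X))\cong X$ via $\eta$, so that $F(X)\cong F(X')$ forces $X\cong X'$. I expect the only non-formal points to be these two categorical observations---the passage from a summand of a direct sum to a summand of a single term via Krull--Schmidt in (i), and the reflection of zero objects by a faithful functor in (ii); the remainder is bookkeeping with the unit $\eta$.
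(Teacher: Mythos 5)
Your proof is correct and takes essentially the same approach as the paper's: split the unit $\eta$ via Rafael's theorem, decompose $F(X)$ into indecomposables, apply $G$, and conclude via Krull--Schmidt. You merely make explicit two points the paper leaves implicit---the passage from a summand of $\bigoplus_i G(Y_i)$ to a summand of a single $G(Y_i)$, and the fact that a faithful additive functor reflects zero objects, which is exactly how the paper's terse ``the last part follows from this'' is meant to be read.
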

\begin{proof}
If $X\in \operatorname{Ind}(A)$, then Rafael's Theorem implies that $X$ is a direct summand of $GF(X)$ (the unit $\eta$ map splits); decomposing $F(X)=\bigoplus\limits_i Y_i$ as a finite direct sum of indecomposables yields $X\stackrel{\oplus}{\hookrightarrow}\bigoplus\limits_{i}G(Y_i)=GF(X)$. So $X\stackrel{\oplus}{\hookrightarrow}G(Y_i)$ for some $i$. Moreover, if $\eta$ is an isomorphism, then it follows that all but one term $G(Y_0)$ in the direct sum decomposition $X=\bigoplus\limits_{i}G(Y_i)$ are zero, and $G(Y_0)=X$. If $G$ is faithful, the last part follows from this.
\end{proof}

\noindent Of course, the previous Lemma has a similar version for the right adjoint:

\begin{lemma}\label{l.1.2}
(i) If $G$ is separable, then for each $Y\in \operatorname{Ind}(B)$, there is $X\in \operatorname{Ind}(A)$ such that $Y$ is a direct summand in $F(X)$. In particular, if $A$ is representation finite, then so is $B$. \\
(ii) Moreover, if the counit $\epsilon$ is an isomorphism, 
for every $Y\in \operatorname{Ind}(B)$, $G(Y)=X\oplus\bigoplus\limits_{j}X_j$ where $X,X_j$ are indecomposable, $Y=F(X)$ and $F(X_j)=0$ for all $j$. If, in addition, $F$ is faithful, then $G$ induces an injective map from $\operatorname{Ind}(B)$ to $\operatorname{Ind}(A)$.
\end{lemma}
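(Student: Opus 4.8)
The plan is to run the proof of Lemma \ref{l.1.1} in its dual form, systematically interchanging $F$ with $G$, the unit $\eta$ with the counit $\e$, and the roles of $A$ and $B$. The only structural change is that I invoke part (2) of Rafael's Theorem in place of part (1): since $G$ is separable, the counit $\e : FG \to 1_{\mathcal{D}}$ cosplits, so for each $Y$ the component $\e_Y : FG(Y) \to Y$ is a split epimorphism, and hence $Y$ is a direct summand of $FG(Y)$. This single observation is the analogue of ``$X$ is a direct summand of $GF(X)$'' used before.

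For part (i) I would fix $Y \in \operatorname{Ind}(B)$, decompose $G(Y) = \bigoplus_j X_j$ into indecomposable $A$-modules (legitimate since all modules are finite dimensional, so Krull--Schmidt applies), and apply $F$ to obtain $FG(Y) = \bigoplus_j F(X_j)$. Combining this with $Y \stackrel{\oplus}{\hookrightarrow} FG(Y)$ and using that $Y$ is indecomposable, Krull--Schmidt forces $Y \stackrel{\oplus}{\hookrightarrow} F(X_j)$ for some $j$; taking $X = X_j$ gives the claim. The representation-finiteness statement then follows by a counting argument: if $A$ has finitely many indecomposables $X_1,\dots,X_m$, then every $Y \in \operatorname{Ind}(B)$ occurs as a summand of some $F(X_i)$, and these collectively have only finitely many indecomposable summands, so $\operatorname{Ind}(B)$ is finite.

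For part (ii), when $\e$ is an isomorphism I have $FG(Y) \cong Y$. Decomposing $G(Y) = X \oplus \bigoplus_j X_j$ and applying $F$ yields $F(X) \oplus \bigoplus_j F(X_j) \cong Y$; since $Y$ is indecomposable, Krull--Schmidt shows that exactly one summand survives, which after relabeling gives $F(X) = Y$ and $F(X_j) = 0$ for all $j$. If moreover $F$ is faithful, then $F(X_j) = 0$ forces $X_j = 0$ (a faithful functor kills no nonzero object, since $\id_{X_j}$ and $0$ would otherwise have the same image, contradicting injectivity on $\Hom$-sets), so $G(Y) = X$ is indecomposable and $G$ sends indecomposables to indecomposables. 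Injectivity of the induced map $\operatorname{Ind}(B) \to \operatorname{Ind}(A)$ is then immediate: from $Y \cong FG(Y)$, any isomorphism $G(Y) \cong G(Y')$ gives $Y \cong FG(Y) \cong FG(Y') \cong Y'$.

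I do not anticipate a genuine obstacle here, as the argument is formally dual to Lemma \ref{l.1.1}; the only points requiring care are the bookkeeping (making sure it is the cosplitting of the counit, rather than the splitting of the unit, that separability of $G$ supplies) and the standing finite-dimensionality hypothesis, which is exactly what licenses the repeated appeals to Krull--Schmidt.
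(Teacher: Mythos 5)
Your proof is correct and is exactly what the paper intends: the paper states Lemma~\ref{l.1.2} without proof as the dual of Lemma~\ref{l.1.1} (``the previous Lemma has a similar version for the right adjoint''), and your argument is precisely that dualization, invoking part (2) of Rafael's Theorem so that the cosplitting of the counit makes $Y$ a direct summand of $FG(Y)$, then applying Krull--Schmidt as in Lemma~\ref{l.1.1}. The added details (the finiteness count in (i), and the observation that faithfulness of $F$ forces $X_j=0$ in (ii)) are accurate fillings-in of steps the paper leaves implicit.
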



\subsection{Localization} 

We remark how the localization or ``corner rings" also produce examples of such separable functors. If $A$ is any ring, and $e$ an idempotent in $A$, we have the following adjoint pairs:

$$eAe{\rm-Mod}\stackrel{L}{\longrightarrow} A{\rm-Mod} \stackrel{E}{\longrightarrow} eAe{\rm-Mod} \stackrel{R}{\longrightarrow} A{\rm-Mod}$$

where $L(N)=Ae\otimes_{eAe} N$; $E(M)=eM$; $R(N)=\Hom_{eAe}(eA,N)$. Then $E$ can be regarded  as a localization functor in the sense of P. Gabriel \cite{G}. The functors $(L,E)$ and $(E,R)$ form adjoint pairs. It is well known (and not difficult to see) that $E\circ L={\rm Id}$ and $E\circ R={\rm Id}$ via the unit, and respectively, counit of the adjunction. These identities also imply that $L$ and $R$ are faithful. 

\begin{corollary}\label{c.1.1}
The functors $L,R$ are separable. If $A$ is a finite dimensional algebra, then each indecomposable $eAe$-module $N$ is of the form $N=eM$ for an indecomposable module $M$; moreover, one can pick such $M$ in two ways: (1) such that $Ae\otimes_{eAe} N=M\oplus \bigoplus\limits_i M_i$, a direct sum of indecomposable $A$-modules with $eM_i=0$ or (2) such that $\Hom_{eAe}(eA,N)=M\oplus \bigoplus\limits_i M_i$ a direct sum of indecomposable $A$-modules with $eM_i=0$. 
\end{corollary}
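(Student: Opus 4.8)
The plan is to deduce everything from Rafael's Theorem together with Lemmas \ref{l.1.1} and \ref{l.1.2}, exploiting the facts recorded just above the statement: that the composites $E\circ L={\rm Id}$ and $E\circ R={\rm Id}$ are realized by the unit and the counit of the respective adjunctions, so that these unit/counit transformations are in fact natural isomorphisms.

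First I would settle separability. Since $(L,E)$ is an adjoint pair with $L$ the left adjoint, Rafael's Theorem (part 1) says $L$ is separable precisely when the unit $\eta\colon 1\to EL$ splits. But $EL={\rm Id}$ with $\eta$ itself the identifying natural isomorphism, so $\eta$ is invertible and $\nu=\eta^{-1}$ is the required retraction; hence $L$ is separable. Dually, $(E,R)$ is an adjoint pair with $R$ the right adjoint, and Rafael's Theorem (part 2) gives that $R$ is separable exactly when the counit $\epsilon\colon ER\to 1$ cosplits; since $ER={\rm Id}$ via this counit, $\epsilon$ is invertible and $\zeta=\epsilon^{-1}$ cosplits it, so $R$ is separable.

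For the structural statements I would feed these facts into the two lemmas, keeping careful track of the roles of the functors. Finite-dimensionality of $A$ guarantees that $eAe$ is finite-dimensional and that the module categories are Krull--Schmidt, so the lemmas apply. For way (1), apply Lemma \ref{l.1.1}(ii) to the pair $(F,G)=(L,E)$ (so the lemma's ground algebra ``$A$'' is here $eAe$ and its ``$B$'' is $A$): since $\eta$ is an isomorphism, every indecomposable $eAe$-module $N$ satisfies $L(N)=M\oplus\bigoplus_i M_i$ with $M,M_i$ indecomposable $A$-modules, $N=E(M)=eM$, and $E(M_i)=eM_i=0$. For way (2), apply Lemma \ref{l.1.2}(ii) to the pair $(F,G)=(E,R)$ (now the lemma's ``$A$'' is $A$ and its ``$B$'' is $eAe$): since $\epsilon$ is an isomorphism, the same $N$ satisfies $R(N)=M\oplus\bigoplus_j M_j$ with $M,M_j$ indecomposable, $N=E(M)=eM$, and $eM_j=0$. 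In both cases one recovers $N\cong eM$ for an indecomposable $M$, giving the two asserted descriptions.

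The only genuinely delicate point is the bookkeeping of conventions: I must correctly match the left/right adjoint labeling and the ``which side is $A$, which is $B$'' convention of Lemmas \ref{l.1.1}--\ref{l.1.2} to each of the pairs $(L,E)$ and $(E,R)$, and confirm that the stated identities $EL={\rm Id}$ and $ER={\rm Id}$ are indeed witnessed by the unit and counit (rather than by some other natural transformation), since parts (ii) of those lemmas require exactly the unit, respectively counit, to be an isomorphism. Once that matching is verified, every remaining assertion is a formal consequence.
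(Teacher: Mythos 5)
Your proof is correct and is exactly the argument the paper intends: the corollary is stated as an immediate consequence of Rafael's Theorem (the unit of $(L,E)$ and the counit of $(E,R)$ being isomorphisms, hence splitting/cosplitting) combined with Lemmas \ref{l.1.1}(ii) and \ref{l.1.2}(ii), with the same assignment of roles $(F,G)=(L,E)$ and $(F,G)=(E,R)$ that you make. Your bookkeeping of which algebra plays ``$A$'' and which plays ``$B$'' in each lemma, and the appeal to Krull--Schmidt via finite-dimensionality, matches the paper's setup precisely.
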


\noindent We note that results as the previous one can be obtained in more general context of localization of categories, as in many instances such localizations have left and right adjoints which are ``one-sided" inverses, hence satisfying such properties \cite{G}.

\subsection{Split and separable extensions}

\noindent For any morphism of algebras $\varphi : A \rightarrow B$, we have an adjoint pair of functors $\left( (-)\otimes_BB, \Res_{\varphi} \right)$ between right $A$-modules and right $B$-modules given by induction and restriction along $\varphi$. For the purposes of this paper we may assume that $\varphi$ is injective, identify $A$ with $\varphi (A)$, and only consider the inclusion $i: \varphi (A) \rightarrow B$; however, the statements below hold in general. 

\begin{definition} 
 Let $A \subset B$ be an extension of algebras. $B$ is called a {\bf{split}} extension of $A$ if $(-)\otimes_{A}B$ is separable, and {\bf{separable}} if $\Res_{A}^B = \Res_i$ is separable. 
\end{definition}   

\noindent  The functor characterizations of separable and split extensions are often useful. However, it is usually easier to use the following criterion to directly verify that a given extension of algebras is split or separable:

\begin{lemma} 
Let $A\subset B$ be an extension of algebras. Then the following hold: 
\begin{enumerate} 
\item $B$ is a split extension of $A$ if and only if there is an $A$-sub-bimodule $I$ of $B$ such that $B = I \oplus A$ as $A$-bimodules. 
\item $B$ is a separable extension of $A$ if and only if the multiplication map $u : B\otimes_{A}B \rightarrow B$ is a split epimorphism of $B$-bimodules, if and only if there is an element $e \in B\otimes_{A}B$ such that $u(e) = 1$ and $eb = be$ for all $b \in B$.
\end{enumerate}
\end{lemma}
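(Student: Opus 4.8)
The plan is to prove both characterizations by directly manipulating the adjunction data supplied by Rafael's Theorem, specialized to the induction/restriction adjoint pair. The key observation throughout is that for the extension $i:A\hookrightarrow B$, the counit and unit of the relevant adjunctions are given by concrete multiplication and structure maps, so the abstract ``splits/cosplits'' conditions of Rafael's Theorem translate into the concrete bimodule statements claimed.

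\textbf{Part (1): the split extension criterion.} First I would recall that $B$ is a split extension means $(-)\otimes_A B$ is separable. I would like to apply Rafael's Theorem, but note that $(-)\otimes_A B$ is the \emph{left} adjoint of the restriction functor $\Res_A^B$, so I use part (1) of Rafael's Theorem applied to the adjoint pair $\big((-)\otimes_A B,\ \Res_A^B\big)$: separability of the left adjoint $F=(-)\otimes_A B$ is equivalent to the unit $\eta:1\to \Res\circ(-\otimes_A B)$ being split by a natural retraction. Rather than chase this through all modules, the cleanest route is to evaluate at the regular bimodule and use that $(-)\otimes_A B$ is separable if and only if the natural transformation splits on the free module $A$, i.e. on $B=A\otimes_A B$ itself, giving an $A$-bimodule retraction $B\to A$ of the inclusion. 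Concretely, I would argue that a natural $A$-bimodule splitting of the inclusion $A\hookrightarrow B$ is exactly the data of an $A$-sub-bimodule complement $I$ with $B=A\oplus I$: given such a splitting $\pi:B\to A$, set $I=\ker\pi$; conversely a complement $I$ yields the projection $\pi$. The main work is checking that producing this single bimodule splitting is equivalent to separability of the whole functor; I would invoke that $(-)\otimes_A B$ is faithfully consistent with evaluation on generators, so naturality forces the retraction to be determined by its value on $1\in B$, reducing the functorial statement to the bimodule statement.

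\textbf{Part (2): the separable extension criterion.} Here $B$ is separable means $\Res_A^B$ is separable, and $\Res_A^B=\Res_i$ is the \emph{right} adjoint in the pair $\big((-)\otimes_A B,\ \Res_i\big)$. By part (2) of Rafael's Theorem, separability of the right adjoint $G=\Res_i$ is equivalent to the counit $\varepsilon$ of the adjunction cosplitting. I would identify the counit: on a $B$-module $M$ it is the action map $\varepsilon_M:\Res_i(M)\otimes_A B\to M$, and in particular on $M=B$ it is precisely the multiplication map $u:B\otimes_A B\to B$, a morphism of $B$-bimodules. The cosplitting of the counit then amounts to a natural section, and evaluating at the free generator $M=B$ converts this into a $B$-bimodule section $\zeta$ of $u$, i.e. $u$ is a split epimorphism of $B$-bimodules. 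For the final ``element'' reformulation, I would set $e=\zeta(1)\in B\otimes_A B$; then $u(e)=1$ because $\zeta$ splits $u$, and the condition that $\zeta$ is a morphism of $B$-bimodules (equivalently that $\varepsilon$ cosplits naturally) translates exactly into $be=eb$ for all $b\in B$. Conversely, any such separability idempotent $e$ defines $\zeta(b)=be=eb$ and hence a bimodule section of $u$.

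\textbf{Main obstacle.} The essential subtlety, and the step I would treat most carefully, is the passage between the \emph{functorial} conditions (the unit splits, the counit cosplits, as natural transformations on all modules) and the \emph{single-object} bimodule conditions (a complement $I$, or a separability element $e$). This is exactly the standard reduction that separability of an induction or restriction functor is detected on the regular bimodule, using naturality together with the fact that every module is a quotient of free modules and that natural transformations commute with the module structure maps. I would make this reduction explicit by checking that the candidate natural transformation built from $\pi$ (respectively from $e$) is genuinely natural, using that morphisms of $B$-modules are compatible with the $B$-action and that $\otimes_A B$ and $\Res_i$ send these to compatible maps; the bimodule condition $be=eb$ is precisely what guarantees naturality of $\zeta$, which is why it appears in the statement.
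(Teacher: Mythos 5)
Your proposal is correct, but note that the paper itself does not prove this lemma at all: its ``proof'' is just a citation to the references on separable bimodules and Frobenius extensions (Caenepeel--Zhu and Kadison). Your argument is therefore a genuinely different route, namely a self-contained derivation from Rafael's Theorem, which the paper does state in Section 1, and the derivation holds up. For (1), the unit of the adjunction $\bigl((-)\otimes_A B, \Res_A^B\bigr)$ evaluated at the regular module $A$ is the inclusion $A\hookrightarrow B$; naturality with respect to the right $A$-module maps $l_a\colon A\to A$ (left multiplications) upgrades any natural retraction $\nu_A$ to an $A$-bimodule retraction $B\to A$, whose kernel is the complement $I$, and conversely a bimodule projection $\pi$ extends to the natural splitting $\nu_M(m\otimes b)=m\pi(b)$. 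For (2), the counit at $N=B$ is the multiplication $u$; a natural section $\zeta$ gives $e=\zeta_B(1)$ with $u(e)=1$, where $be=eb$ follows by combining naturality at left multiplications (giving $\zeta_B(b)=be$) with the fact that $\zeta_B$ is a map of right $B$-modules (giving $\zeta_B(b)=eb$), and conversely $e$ defines $\zeta_N(n)=ne$, which is right $B$-linear and natural precisely because $be=eb$. The ``main obstacle'' you flag --- passing between the single-object bimodule data and the full natural transformation --- is resolved by exactly these formulas, since naturality against the maps $a\mapsto ma$ and $b\mapsto nb$ forces $\nu_M$ and $\zeta_N$ to be determined by their values on the regular module; your sketch identifies this correctly, though phrases like ``faithfully consistent with evaluation on generators'' should be replaced by that explicit check in a final write-up. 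What your approach buys is a proof entirely within the paper's own toolkit; what the paper's citation buys is brevity and the more general context of separable bimodules in which both characterizations are standard.
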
  

\begin{proof} 
See \cite{Caen2}, \cite{Kad}.
\end{proof} 

\noindent We note that another notion of split extensions, in the spirit of Hochschild, is often present in literature, which requires the subspace $I$ above to be an ideal.

\begin{corollary}  \label{c.1.2}
Let $A \subset B$ be an extension of algebras. Then the following hold: 
\begin{enumerate} 
\item If $B$ is a split extension of $A$, then every indecomposable $A$-module is a direct summand of a restriction of an indecomposable $B$-module; in particular, if $B$ is representation-finite, then so is $A$. 
\item If $B$ is a separable extension of $A$, then every indecomposable $B$-module is a direct summand of a module induced from an indecomposable $A$-module; in particular, if $A$ is representation-finite, then so is $B$.
\end{enumerate}
\end{corollary}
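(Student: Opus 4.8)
The plan is to derive Corollary \ref{c.1.2} directly from the two preceding lemmas (\ref{l.1.1} and \ref{l.1.2}) by specializing the abstract adjoint pair $(F,G)$ to the induction/restriction pair attached to the inclusion $A \subset B$. Concretely, for part (1) I would set $F = (-)\otimes_A B$ and $G = \Res_A^B$, which by the discussion preceding the Definition of split/separable extensions form an adjoint pair $(F,G)$ between (right) $A$-modules and (right) $B$-modules. By definition, $B$ being a split extension of $A$ means exactly that $F = (-)\otimes_A B$ is separable. Then Lemma \ref{l.1.2}(i), applied with this $F$ separable and $G = \Res_A^B$, says that for each indecomposable $B$-module $Y$ there is an indecomposable $A$-module $X$ with $Y$ a direct summand of $F(X)$ — but this is the induction statement, so I must instead invoke the lemma that matches the direction. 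Here the key bookkeeping point is to pair the correct lemma with the correct separability hypothesis.

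Let me be precise about the matching. Lemma \ref{l.1.1} concerns separability of the \emph{left} functor $F$ and concludes that each indecomposable in the \emph{source} category embeds as a summand of $G(Y)$ for some indecomposable $Y$ in the target. Lemma \ref{l.1.2} concerns separability of the \emph{right} adjoint $G$ and concludes that each indecomposable in the target embeds as a summand of $F(X)$. For part (1) of the Corollary, the statement is that every indecomposable $A$-module is a summand of a \emph{restriction} $\Res_A^B(Y)=G(Y)$ of an indecomposable $B$-module; the hypothesis is that $B$ is a split extension, i.e.\ $F=(-)\otimes_A B$ is separable. This is exactly the conclusion of Lemma \ref{l.1.1}(i) applied to the pair $(F,G)=((-)\otimes_A B,\ \Res_A^B)$. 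For part (2), the statement is that every indecomposable $B$-module is a summand of an \emph{induced} module $F(X)=X\otimes_A B$; the hypothesis is that $B$ is a separable extension, i.e.\ $G=\Res_A^B$ is separable. This is precisely Lemma \ref{l.1.2}(i) applied to the same adjoint pair. So the body of the proof is just two one-line citations once the identifications are fixed.

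The representation-finiteness consequences then follow immediately from the summand statements, and indeed are already recorded as the ``in particular'' clauses of Lemmas \ref{l.1.1}(i) and \ref{l.1.2}(i). For part (1): if $B$ is representation-finite, there are finitely many indecomposable $B$-modules $Y$, hence finitely many restrictions $G(Y)$, and every indecomposable $A$-module occurs as a summand of one of these; since a finite-dimensional module has only finitely many indecomposable summands, $A$ admits only finitely many indecomposables and is representation-finite. The symmetric argument with induction gives part (2). No genuinely new content is needed beyond translating the lemmas.

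The only real obstacle is purely organizational rather than mathematical: one must verify that $((-)\otimes_A B,\ \Res_A^B)$ is genuinely an adjoint pair with $F$ the \emph{left} adjoint, so that Lemma \ref{l.1.1} (stated for separability of the left functor) applies to split extensions and Lemma \ref{l.1.2} (for the right functor) applies to separable extensions. This adjunction is standard — it is exactly the tensor-hom/induction-restriction adjunction recalled at the start of the subsection — so the matching is immediate once stated carefully. Thus I expect the proof to consist of fixing the adjoint pair, citing Lemma \ref{l.1.1}(i) for (1) and Lemma \ref{l.1.2}(i) for (2), and noting that the finiteness statements are the corresponding ``in particular'' clauses.
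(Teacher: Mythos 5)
Your proposal is correct and matches the paper's intent exactly: the paper states Corollary \ref{c.1.2} without proof, precisely because it follows by specializing the adjoint pair $(F,G)$ of Lemmas \ref{l.1.1} and \ref{l.1.2} to $\bigl((-)\otimes_A B,\ \Res_A^B\bigr)$, with part (1) coming from Lemma \ref{l.1.1}(i) (split means $F$ separable) and part (2) from Lemma \ref{l.1.2}(i) (separable means $G$ separable). Your matching of hypotheses to lemmas, and the representation-finiteness conclusions, are exactly the intended argument.
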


\noindent Hence split extensions hence allow one to transfer representation-theoretic properties of $B$ down to $A$, whereas separable extensions allow one to transfer such data from $A$ to $B$. To close this section, we recall some common representation-theoretic terminology:

\begin{definition} \label{d.split}
Suppose $A\subset B$ is a split extension, with $B = I\oplus A$ as before. Then $B$ is {\bf{split-by-nilpotent}} if $I$ is a nilpotent ideal of $B$.
\end{definition}   

\begin{definition}\label{d.trivial}
Suppose $A\subset B$ is a split-extension, with $B = I\oplus A$ as before. If $I^2 = 0$, we say that $B$ is the {\bf{trivial extension}} of $A$ through the $A$-bimodule $I$.
\end{definition}

\noindent Split-by-nilpotent extensions have a fairly rich theory. In particular, trivial extensions arise prominently as cluster tilted algebras (see \cite{AssCTA1}, \cite{AssCTA2}, \cite{Buan}, \cite{Platz}, \cite{Ser}.) We will see in the next section that, under suitable hypotheses, we are able to gather partial information about the maximal subalgebras of a fixed algebra $B$ by considering related trivial extensions. This reduction works for general $B$, but relies heavily on the ideals shared by $B$ and its maximal subalgebras. For more on split-by-nilpotent extensions in general, see \cite{Ass1}, \cite{Ass4},  \cite{Ass2}, \cite{Ass3}, \cite{Caen2}, \cite{Platz}, \cite{Ser}.

\section{General Results}    \label{s.gen}

\noindent We start off by recalling a simple lemma related to primitive orthogonal idempotents. It appears as Theorem 10.3.6 in \cite{Haz}, but does not appear to be well-known. Hence, we reproduce the brief proof here: 

\begin{lemma} 
Let $A$ be a ring, $e_1A\oplus \cdots \oplus e_nA = A = f_1A\oplus \cdots \oplus f_nA$ a decomposition of $A_A$ into right ideals with $\{ e_1,\ldots , e_n\}$ and $\{ f_1,\ldots , f_n \}$ two collections of pairwise orthogonal idempotents which sum to $1$. Furthermore suppose that $e_iA \cong f_iA$ as $A$-modules. Then there is an invertible element $a \in A$ such that $f_i = ae_ia^{-1}$. In particular, if $\{e_1,\dots,e_n\}$ and $\{f_1,\dots,f_n\}$ are two complete systems of primitive orthogonal idempotents, then $f_{\sigma(i)} = ae_ia^{-1}$ for a suitable permutation $\sigma$.
\end{lemma}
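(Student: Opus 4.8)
The plan is to realize each module isomorphism $e_iA\cong f_iA$ as left multiplication by a ring element, and then to glue these into a single invertible $a$. First I would recall the standard identification $\Hom_A(e_iA,A)\cong Ae_i$ for right modules, under which an $A$-linear map is sent to the image of $e_i$; this image lies in $Ae_i$ because $e_i$ acts on the right as the identity on $e_iA$. Fixing isomorphisms $\phi_i\colon e_iA\to f_iA$, the requirement that $\phi_i$ take values in $f_iA$ forces $u_i:=\phi_i(e_i)\in f_iAe_i$, and the analogous statement for the inverse yields $v_i:=\phi_i^{-1}(f_i)\in e_iAf_i$. Evaluating the relations $\phi_i^{-1}\phi_i=\id_{e_iA}$ and $\phi_i\phi_i^{-1}=\id_{f_iA}$ at $e_i$ and $f_i$ and using right $A$-linearity translates them into the two ring identities $v_iu_i=e_i$ and $u_iv_i=f_i$.

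Next I would set $a=\sum_i u_i$ and $b=\sum_i v_i$ and check that $ab=ba=1$. The key computation combines the orthogonality of the two systems with the one-sided conditions $u_i\in f_iAe_i$ and $v_i\in e_iAf_i$: since $u_i=u_ie_i$ and $v_j=e_jv_j$, one has $u_iv_j=u_i(e_ie_j)v_j=\delta_{ij}u_iv_i=\delta_{ij}f_i$, so $ab=\sum_i f_i=1$; the symmetric bookkeeping with $u_i=f_iu_i$ and $v_i=v_if_i$ gives $v_iu_j=\delta_{ij}e_i$ and hence $ba=\sum_i e_i=1$. Thus $a$ is a unit with $a^{-1}=b$.

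Finally, the conjugation formula falls out of the same orthogonality relations: $ae_i=\sum_j u_je_i=u_i$, since $u_j=u_je_j$ and $e_je_i=\delta_{ij}e_i$, and therefore $ae_ia^{-1}=u_ib=\sum_j u_iv_j=u_iv_i=f_i$. For the corollary, when $\{e_i\}$ and $\{f_i\}$ are complete systems of primitive orthogonal idempotents, the summands $e_iA$ and $f_iA$ are indecomposable with local endomorphism rings, so the isomorphism $\bigoplus_i e_iA\cong A\cong\bigoplus_j f_jA$ together with the Krull--Schmidt--Azumaya theorem yields a permutation $\sigma$ with $e_iA\cong f_{\sigma(i)}A$; applying the first part to the re-indexed system $\{f_{\sigma(i)}\}$, which is still a pairwise orthogonal family summing to $1$, produces $a$ with $ae_ia^{-1}=f_{\sigma(i)}$. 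None of the calculations are difficult; the only point demanding genuine care is the consistent tracking of the one-sided membership conditions $u_i\in f_iAe_i$ and $v_i\in e_iAf_i$, as this is precisely what forces all the cross terms in $ab$, $ba$, and $ae_ia^{-1}$ to vanish.
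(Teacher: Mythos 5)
Your proof is correct and takes essentially the same route as the paper's: realize each isomorphism $e_iA\cong f_iA$ as left multiplication by an element $u_i\in f_iAe_i$ with inverse realized by $v_i\in e_iAf_i$, sum these to get mutually inverse units $a=\sum_i u_i$ and $b=\sum_i v_i$, and deduce $ae_ia^{-1}=f_i$ from the orthogonality relations. Your write-up is in fact slightly more careful than the paper's (you make the vanishing of the cross terms $u_iv_j$, $v_iu_j$ explicit, and you supply the Krull--Schmidt argument for the permutation statement, which the paper leaves implicit).
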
 

\begin{proof} 
We know that $\Hom_A(e_iA, f_iA) \cong f_iAe_i$, with homomorphisms in $\Hom_A(e_iA, f_iA)$ given by left multiplication by elements in $f_iAe_i$; hence, the isomorphism $e_iA \cong f_iA$ is realized by left multiplication by some $a_i \in A$ with $f_ia_i = a_ie_i = a_i$. Set $a = \sum_{i=1}^n{a_i}$. Let $b_i \in e_iAf_i$ realize the inverse isomorphism to $a_i$. Then $a_ib_i=f_i$ and $b_ia_i=f_i$. Set $b = \sum_{i=1}^n{b_i}$. Then $ab = \sum_i{a_ib_i} = \sum_i{f_i} = 1$ and $ba = \sum_i{b_ia_i} = \sum_i{e_i} = 1$, so $a$ is a unit. Since $f_ia = f_ia_i = a_ie_i = a e_i$, we have $f_i = ae_ia^{-1}$ for all $i$. 
\end{proof} 

\begin{corollary} 
Let $A \subset B$ be an extension of finite-dimensional $\KK$-algebras, and let $\{ e_1,\ldots , e_n \}$ be a complete collection of primitive orthogonal idempotents for $B$. Then there is a unit $u \in U(B)$ such that the subalgebra $uAu^{-1} \subset B$ contains a complete collection of primitive orthogonal idempotents for $uAu^{-1}$, whose elements are sums of elements in $\{ e_1,\ldots , e_n \}$.
\end{corollary}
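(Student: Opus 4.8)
The plan is to start with any complete collection of primitive orthogonal idempotents $\{g_1,\dots,g_m\}$ for the subalgebra $A$, and to compare it with the given collection $\{e_1,\dots,e_n\}$ for $B$ via the previous lemma. The key point is that $A \subset B$ means any idempotent decomposition of $A$ is automatically one of $B$, though typically not into \emph{primitive} $B$-idempotents: each $g_k$ decomposes in $B$ as $g_k = \sum_{i \in S_k} h_{k,i}$ for suitable primitive orthogonal idempotents $h_{k,i}$ of $B$, where the index sets $S_k$ partition $\{1,\dots,N\}$ for some total count $N = \dim$-count of primitive $B$-idempotents. Thus the refinement of $\{g_1,\dots,g_m\}$ down to primitive $B$-idempotents gives \emph{some} complete collection $\{h_1,\dots,h_N\}$ of primitive orthogonal idempotents of $B$ with the property that each $g_k$ is a sum of certain $h_j$'s.

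Next I would invoke the lemma to compare this refined collection $\{h_1,\dots,h_N\}$ with the prescribed collection $\{e_1,\dots,e_n\}$. Since both are complete systems of primitive orthogonal idempotents for $B$, we must first check that $N = n$ (this follows because both refine the identity $1 = \sum e_i = \sum h_j$ and the multiset of isomorphism classes $\{e_jB\}$ and $\{h_jB\}$ must coincide by Krull--Schmidt applied to $B_B$). The lemma then produces a unit $a \in U(B)$ and a permutation $\sigma$ with $e_{\sigma(j)} = a\, h_j\, a^{-1}$ for all $j$. Setting $u = a^{-1}$ (or $a$, depending on the convention one adopts to conjugate $A$ rather than $B$), I would conjugate: the collection $\{u h_j u^{-1}\}$ is then exactly a permutation of $\{e_1,\dots,e_n\}$, and correspondingly $\{u g_k u^{-1}\}$ is a complete collection of primitive orthogonal idempotents for $uAu^{-1}$. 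Since each $g_k$ is a sum of the $h_j$'s, each $u g_k u^{-1}$ is a sum of the $e_i$'s, which is exactly the desired conclusion.

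The main obstacle I anticipate is the bookkeeping needed to ensure the hypotheses of the lemma are genuinely met after refinement, specifically the matching $e_iB \cong h_iB$ up to reindexing. The lemma as stated requires the two systems to be indexed so that $e_iA \cong f_iA$; but two arbitrary complete systems of primitive orthogonal idempotents of $B$ need only agree as \emph{multisets} of isomorphism classes, so one must first reorder one system by a permutation to align the isomorphism types before applying the lemma. This is precisely the content of the ``In particular'' clause of the lemma, and the honest step is to argue that Krull--Schmidt guarantees the two refined systems decompose $B_B$ into isomorphic indecomposables with the same multiplicities, forcing $N = n$ and furnishing the required permutation. Once this alignment is in place, everything else is a direct transport of structure along conjugation by $u$.

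A secondary point worth verifying carefully is that conjugating $A$ by $u$ preserves the primitivity and orthogonality of the $g_k$ \emph{as idempotents of the conjugated subalgebra} $uAu^{-1}$: orthogonality and the sum-to-one property are preserved by any inner automorphism, and primitivity in $uAu^{-1}$ is immediate since conjugation is an algebra isomorphism $A \to uAu^{-1}$. Hence the $\{u g_k u^{-1}\}$ form a complete system of primitive orthogonal idempotents for $uAu^{-1}$, each expressible as a sum of the $e_i$, completing the argument.
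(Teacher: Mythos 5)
Your proposal is correct and follows essentially the same route as the paper: refine a complete system of primitive orthogonal idempotents of $A$ into primitive orthogonal idempotents of $B$, invoke the preceding lemma (its ``in particular'' clause) to obtain a unit conjugating this refined system onto a permutation of $\{e_1,\ldots,e_n\}$, and then transport everything along conjugation. The extra care you take with Krull--Schmidt alignment and with primitivity being preserved under conjugation is exactly what the paper leaves implicit, so nothing needs to change.
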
 

\begin{proof} 
Let $\{ \e_1,\ldots , \e_k \}$ be a complete collection of primitive orthogonal idempotents for $A$. Then in $B$, the $\e_i$'s are still a complete collection of orthogonal idempotents, and each $\e_i$ can be written as a sum of primitive orthogonal idempotents, say $\e_i = \sum_{j=1}^{n_i}{f_j^{(i)}}$. By the previous lemma, there is a unit $u \in U(B)$ such that $\{uf_i^{(j)}u^{-1}\}$ form a permutation of $\{ e_1,\ldots , e_n \}$ for all $i$ and $j$, from which the claim follows.
\end{proof}

\noindent {\bf{Note:}} Automorphisms of $B$ preserve maximality of subalgebras. In other words, if $\alpha$ is an automorphism of $B$ and $A$ is a maximal subalgebra, then $\alpha (A) \subset \alpha (B) = B$ is another maximal subalgebra. More generally, $\KK$-algebra endomorphisms of $B$ induce order-preserving maps on the poset of subalgebras of $B$ (ordered with respect to inclusion.) Often, it will be natural to classify maximal subalgebras of $B$ up to conjugation by a unit (i.e. up to orbits of the action of inner automorphisms on $B$.)
\bigskip  

\noindent {\bf{Note:}} In general, it is not true that if $A$ and $A'$ are isomorphic subalgebras of $B$, that $A' = \alpha (A)$ for some automorphism $\alpha : B \rightarrow B$. For instance, if $B = \mathbb{F}_2[X,Y]/(X^2, Y^4)$, then $A = \Sp_{\mathbb{F}_2}\{1, X\} \cong \Sp_{\mathbb{F}_2}\{ 1, Y^2 \} = A'$, but $\alpha (A) \neq A'$ for any automorphism $\alpha$ of $B$. Indeed, any such $\alpha$ would necessarily map $X$ to $Y^2$. Since $Y^2 \neq 0$, $\alpha (Y) = \alpha_XX + Y + \mathcal{O}(XY)$, for some element $\alpha_X \in \mathbb{F}_2$, and where $\mathcal{O}(XY)$ is a multiple of $XY$. But $XY^2 \neq 0$ in $B$, and $\alpha (X) \alpha (Y)^2 = Y^2 (\alpha_X X + Y + \mathcal{O}(XY))^2 = Y^2\cdot Y^2 = 0$, a contradiction. In section 4, we will see more examples of this behavior.

\begin{lemma} \label{l.1}
Let $A$ be a maximal subalgebra of $B$. Then $A+ J(A)\cdot B = A = A+ B\cdot J(A)$, and exactly one of the following holds: \par
(i) $J(B) \subset A$ and $A/J(B)$ is a maximal subalgebra of the semisimple algebra $B/J(B)$.\par
(ii) $J(B)\not\subset A$, in which case $J(A) = A \cap J(B)$ and $A$ and $B$ have the same simple modules, that is, the functor ${\rm Res}_A^B$ induces a bijection from simple $B$-modules to simple $A$-modules.
\end{lemma}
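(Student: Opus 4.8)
The plan is to prove the universal identity $A+J(A)\cdot B=A$ first, and only then address the dichotomy, whose two alternatives (namely $J(B)\subseteq A$ or $J(B)\not\subseteq A$) are visibly mutually exclusive and exhaustive, so that ``exactly one holds'' needs no extra argument beyond verifying the stated consequences in each case. For the identity I would first observe that $A+J(A)\cdot B$ is a subalgebra of $B$ containing $A$: in the expansion of $(a_1+x_1)(a_2+x_2)$ with $a_i\in A$ and $x_i\in J(A)\cdot B$, the term $a_1a_2$ lies in $A$ while the remaining three lie in $J(A)\cdot B$, since $J(A)$ is a two-sided ideal of $A$ and $J(A)\cdot B$ is a right ideal of $B$. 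Maximality then forces $A+J(A)\cdot B\in\{A,B\}$, and I would exclude the value $B$ by a Nakayama-type telescoping: if $B=A+J(A)\cdot B$, then $J(A)\cdot B=J(A)A+J(A)^2\cdot B\subseteq J(A)+J(A)^2\cdot B$, and iterating gives $J(A)\cdot B\subseteq J(A)+J(A)^2+\cdots$; as $J(A)$ is nilpotent, this sum is contained in $J(A)\subseteq A$, contradicting $A\subsetneq B$. Hence $J(A)\cdot B\subseteq A$, i.e. $A+J(A)\cdot B=A$, and the identity $A+B\cdot J(A)=A$ follows by the symmetric argument with left ideals. Note that this step needs only the inclusion $J(A)^k A\subseteq J(A)^k$, so it is insensitive to identity conventions on $A$.

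For case (i), assuming $J(B)\subseteq A$, the algebra $A/J(B)$ is a subalgebra of the semisimple algebra $B/J(B)$, and I would invoke the correspondence theorem along $\pi\colon B\to B/J(B)$: taking preimages gives an inclusion-preserving bijection between subalgebras of $B/J(B)$ containing $A/J(B)$ and subalgebras of $B$ containing $A$ (the condition of containing $\ker\pi=J(B)$ being automatic here). Maximality of $A$ in $B$ transfers directly to maximality of $A/J(B)$ in $B/J(B)$.

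For case (ii) the crux is the observation that, since $J(B)$ is a two-sided ideal, $A+J(B)$ is a subalgebra strictly containing $A$ (because $J(B)\not\subseteq A$); maximality therefore forces $A+J(B)=B$. Consequently $\pi|_A\colon A\to B/J(B)$ is surjective with kernel $A\cap J(B)$, so $A/(A\cap J(B))\cong B/J(B)$ is semisimple. Since $A\cap J(B)$ is a nilpotent two-sided ideal of $A$ it lies in $J(A)$, while semisimplicity of $A/(A\cap J(B))$ forces $J(A)\subseteq A\cap J(B)$; hence $J(A)=A\cap J(B)$. Finally, the resulting isomorphism $A/J(A)\cong B/J(B)$ is induced by inclusion followed by $\pi$, so restriction of scalars along it coincides with $\Res_A^B$ on modules annihilated by the radicals. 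Because an isomorphism of semisimple algebras induces an equivalence of module categories preserving simples, and simple $A$- (resp.\ $B$-) modules are exactly the simple $A/J(A)$- (resp.\ $B/J(B)$-) modules, I conclude that $\Res_A^B$ induces a bijection on isomorphism classes of simple modules.

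The main obstacle is case (ii): the entire argument rests on recognizing that maximality forces $A+J(B)=B$, which is precisely what makes $A$ surject onto $B/J(B)$ and thereby identifies $J(A)$ with $A\cap J(B)$. Without this, the inclusion $J(A)\subseteq J(B)$ can fail for a general subalgebra, so this is the one genuinely nonformal point; the remainder is routine use of the radical as the smallest ideal with semisimple quotient and the standard translation between simple modules and semisimple quotients.
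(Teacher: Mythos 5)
Your proposal is correct and follows essentially the same route as the paper: both establish $A+J(A)\cdot B=A$ by noting it is a subalgebra and excluding the value $B$ via Nakayama (your telescoping with nilpotency of $J(A)$ is just an unfolded form of the paper's appeal to Nakayama's Lemma on $B/A$), and both handle the dichotomy by applying maximality to the subalgebra $A+J(B)$, deducing in case (ii) that $A/(A\cap J(B))\cong B/J(B)$, hence $J(A)=A\cap J(B)$ and the bijection on simples. The minor differences (explicit verification that $A+J(A)\cdot B$ is closed under multiplication, and spelling out why the algebra isomorphism identifies restriction on simples) are elaborations of steps the paper leaves implicit.
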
 

\begin{proof} 
Note that  $A+J(A)\cdot B$ is a subalgebra with $A \subseteq A + J(A)\cdot B \subseteq B$, and hence, either $A = A+J(A)\cdot B$ or $A+J(A)\cdot B = B$. If the latter equality holds, then as left $A$-modules, $J(A)\cdot (B/A) = (J(A)\cdot B+A)/A = B/A$, and since $A$ and $B$ are finite-dimensional, we get $B/A = 0$ by Nakayama's Lemma. But then $A= B$, contradicting the properness of the inclusion $A\subset B$. Hence, $A+J(A)\cdot B = A$, and the proof of $A+B\cdot J(A) = A$ is similar. \\
Now, since $A \subset A+J(B) \subset B$ and $A+J(B)$ is a subalgebra, either $A = A+J(B)$ or $A+J(B) = B$. If $A = A+J(B)$, then in particular $J(B) \subset A$, and by correspondence, in this case $A/J(B)$ is a maximal subalgebra of $B/J(B)$. Otherwise, $A+J(B) = B$. In this case $B/J(B) = (A+J(B))/J(B) \cong A/(J(B)\cap A)$ as $A$-modules. In fact, this is an isomorphism of $\KK$-algebras, and since $A/(J(B)\cap A) \cong B/J(B)$ is semisimple, $J(A) \subset J(B)\cap A$. But since $J(B)\cap A$ is a nil (and nilpotent) ideal of $A$ (since $J(B)$ is so as $\dim(B)<\infty$), we also have $J(B)\cap A \subset J(A)$. Therefore, $B/J(B) \cong A/(J(B)\cap A) = A/J(A)$; it is easy to see now that this implies that $\Res_A^B$ induces a bijection from simple $B$-modules to simple $A$-modules. 
\end{proof}

This Lemma breaks maximal subalgebras into two possible types: those subalgebras $A$ of $B$ corresponding to maximal subalgebras $A'$ of the semisimple algebra $B/J(B)$, and the rest, which satisfy $J(A)=A\cap J(B) \neq J(B)$. The first type reduces to the study of the maximal subalgebras of semisimple algebras, and we call them {\bf{maximal subalgebras of semisimple type}} or {\bf{maximal subalgebras of separable type}}, for reasons that will be apparent later. We call the maximal subalgebras of the second kind {\bf{maximal subalgebras of split type}}. We construct a large class of subalgebras of the split type in the following main example; this class essentially produces all examples in many cases, as will be shown next.  

\begin{example}[Maximal subalgebras of split type.]\label{e.split}
(1) Let $H$ be a two-sided ideal of $B$, properly contained in $J(B)$, and maximal with this property (that is, a maximal $B$-sub-bimodule of $J(B)$), and such that the projection $B/H \rightarrow B/J(B)$ admits an algebra retract; equivalently, there is a subalgebra $A'\subseteq B/H$ such that $B/H= A'\oplus J(B)/H$ is a trivial extension, and also a Hochschild split extension, with the ideal $I=J(B)/H$ satisfying $I^2=0$. Let $A$ be a subalgebra of $B$ containing $H$, such that $A/H=A'$. Then $A$ is a maximal subalgebra of $B$, which we will say is {\bf of split type}. Indeed, this follows if we show that $A'$ is a maximal subalgebra of $B'=B/H$. Note that $B'=A'\oplus I$ as $A'$-bimodules; $I$ is a simple $B$-bimodule, and therefore a simple $B'$-bimodule. Since $I^2=0$, the $I$-part of $B'$ acts trivially on itself, which implies that $I$ is a simple $A'$-bimodule. Now, if $A'\subsetneq A''\subseteq B'$ is an intermediate subalgebra, then $A''$ is an $A'$-sub-bimodule, and since $B/A'$ is a simple $A'$-bimodule, the conclusion follows. \\
(2) Assume now that $B/J(B)$ is separable over $\KK$. By the Wedderburn-Malcev theorem, there exists a subalgebra $A_0$ of $B$ such that $B=A_0\oplus J(B)$. If $H$ is a maximal two-sided ideal contained in $J(B)$, then as in (1), it follows that $A_0\oplus H$ is a maximal subalgebra of $B$.  In particular, this is true when $\KK$ is perfect (so when it is algebraically closed), or when the algebra is Schur (i.e. $\End(S)=\KK$ for every simple module), and so when the algebra is basic pointed (that is, every simple module is 1-dimensional). 
\end{example}

The next theorem gives the complete general structure of maximal subalgebras. We will need to use the following well known fact: if $M$ is a simple $B$-bimodule which is finite dimensional (or, more generally, artinian or semiartinian both as a left and as a right module over $B$), then $M$ is semisimple as left and as right $B$-module (and even isotypical). Indeed, taking $M_0$ to be the socle of ${}_BM$ and $b\in B$, then $M_0b$ is semisimple since it is a quotient of the left module $M_0$, so $M_0b\subseteq M_0$ and so $M_0$ a sub-bimodule. Since $M_0\neq 0$, $M_0=M$.

\begin{theorem}\label{t.gen}
Let $B$ be a finite dimensional $\KK$-algebra. Then any maximal subalgebra of $A$ is either of semisimple type, or of split type. Moreover, if $B/J(B)$ is a separable $\KK$-algebra, and $A_0$ is a subalgebra of $B$ with $A_0\oplus J(B)=B$ (which exists by Wedderburn-Malcev Theorem), then:\\
(i) Every maximal subalgebra $A$ of $B$ of split type is conjugate to one of the form in \ref{e.split} (2); that is, $A=u(A_0\oplus H)u^{-1}$, where $u\in U(B)$,  and $H\subset J(B)$ is a two-sided ideal of $B$ maximal inside $J(B)$. \\
(ii) Every maximal subalgebra $A$ of $B$ of semisimple type is of the form $A=A'\oplus J(B)$, where $A'$ is a maximal subalgebra of $A_0$.
\end{theorem}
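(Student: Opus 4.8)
The first assertion is immediate from Lemma \ref{l.1}: that lemma shows that for a maximal subalgebra $A$ of $B$ exactly one of its two alternatives holds, and alternative (i) (where $J(B)\subseteq A$ and $A/J(B)$ is maximal in $B/J(B)$) is by definition the semisimple type, while alternative (ii) (where $J(A)=A\cap J(B)\neq J(B)$) is by definition the split type. For part (ii) of the theorem I would argue by pullback along the canonical projection $\pi:B\to B/J(B)$. If $A$ is of semisimple type, then $J(B)\subseteq A$ and $\bar A:=A/J(B)$ is a maximal subalgebra of $B/J(B)$; since $\pi$ restricts to an algebra isomorphism $A_0\xrightarrow{\sim} B/J(B)$, the preimage $A':=(\pi|_{A_0})^{-1}(\bar A)$ is a maximal subalgebra of $A_0$. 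A routine check (writing $a=a_0+j$ with $a_0\in A_0$, $j\in J(B)$, and noting $\pi(a_0)\in\bar A$) then shows $A=\pi^{-1}(\bar A)=A'\oplus J(B)$, which is the desired form.

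The substance of the theorem is part (i). Fix a maximal subalgebra $A$ of split type, so that by Lemma \ref{l.1} we have $J(A)=A\cap J(B)$ together with the crucial inclusions $J(A)\cdot B\subseteq A$ and $B\cdot J(A)\subseteq A$. Set $H:=A\cap J(B)=J(A)$. The first key step is to observe that $H$ is already a two-sided ideal of all of $B$, not merely of $A$: since $H\subseteq J(B)$ and $J(B)$ is an ideal, $H\cdot B\subseteq J(B)$, while $H\cdot B=J(A)\cdot B\subseteq A$; intersecting gives $H\cdot B\subseteq A\cap J(B)=H$, and symmetrically $B\cdot H\subseteq H$.

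Next I would transport $A$ onto the fixed complement $A_0$. Because $A/J(A)\cong B/J(B)$ is separable, the Wedderburn-Malcev theorem applies to $A$ and furnishes a subalgebra $A_0'\subseteq A$ with $A=A_0'\oplus J(A)=A_0'\oplus H$. Since $A_0'\cap J(B)\subseteq A_0'\cap J(A)=0$ and $\dim A_0'=\dim(B/J(B))$, a dimension count shows $A_0'$ is itself a Wedderburn complement, i.e. $B=A_0'\oplus J(B)$. By the uniqueness clause of Wedderburn-Malcev, $A_0'$ and $A_0$ are conjugate by some $u\in U(B)$ (indeed by an element of $1+J(B)$), say $uA_0'u^{-1}=A_0$; then $uAu^{-1}=A_0\oplus H'$ with $H':=uHu^{-1}$, and $H'$ is again a two-sided ideal of $B$ contained in $J(B)$ (conjugation by a unit preserves ideals, and $J(B)$ is invariant under inner automorphisms). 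Finally, maximality of $uAu^{-1}$ forces $H'$ to be maximal inside $J(B)$: if $H'\subsetneq H''\subsetneq J(B)$ were an intermediate $B$-sub-bimodule, then $A_0\oplus H''$ would be a subalgebra (using that $H''$ is an ideal) strictly between $uAu^{-1}$ and $B$, a contradiction. Relabeling $u^{-1}$ as $u$ gives the stated form $A=u(A_0\oplus H)u^{-1}$ of Example \ref{e.split}(2).

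The main obstacle is the conjugation step: one must correctly invoke both the existence and the uniqueness parts of Wedderburn-Malcev, and verify that the complement $A_0'$ extracted inside $A$ remains a complement to $J(B)$ in all of $B$. This is exactly where the hypothesis that $B/J(B)$ is separable is essential, and where the split-type identity $J(A)=A\cap J(B)$ from Lemma \ref{l.1} does the real work. By contrast, the ideal property of $H$ and the maximality of $H'$ are short consequences of Lemma \ref{l.1} and of the maximality of $A$, respectively.
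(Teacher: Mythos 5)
Your proof is correct, and for part (i) it takes a genuinely more economical route than the paper's. The paper also starts from Lemma \ref{l.1}, but it proves that $H=J(A)$ is an ideal of $B$ by a longer bimodule argument: it introduces the largest ideal $K$ of $B$ contained in $A$ and a minimal ideal $I\supset K$ of $B$ over it, builds a chain of $A$-bimodule isomorphisms $J(B)/J(A)\cong B/A\cong I/(A\cap I)$, and deduces first semisimplicity and then simplicity of $J(B)/J(A)$; this yields simultaneously the ideal property of $H$, its maximality inside $J(B)$, and the extra facts that $J(B)^2\subseteq H$ and that $B/H$ is a trivial extension with square-zero ideal $J(B)/H$ --- facts the paper reuses to connect with Example \ref{e.split}(1) and in Remark \ref{r.practical}. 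You instead obtain the ideal property in two lines from the absorption identities $J(A)\cdot B\subseteq A$ and $B\cdot J(A)\subseteq A$ of Lemma \ref{l.1} intersected with $J(B)$, and you obtain maximality of $H$ afterwards as a direct consequence of the maximality of $A$; these facts then follow a posteriori rather than being built into the argument. For the conjugation step the two proofs also diverge: the paper applies the Wedderburn-Malcev conjugacy theorem inside the quotient $B'=B/H$ and then lifts the conjugating unit back to $B$, whereas you avoid the quotient-and-lift step by applying Wedderburn-Malcev existence to $A$ itself (legitimate, since $A/J(A)\cong B/J(B)$ is separable by Lemma \ref{l.1}(ii)), checking via the dimension count that the complement $A_0'\subseteq A$ is simultaneously a Wedderburn complement for $B$, and invoking conjugacy in $B$ directly; note that since $H$ is a two-sided ideal of $B$ you even have $uHu^{-1}=H$, so your $H'$ equals $H$. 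Your part (ii) is essentially the paper's correspondence argument. The trade-off overall: your argument is shorter and more self-contained, while the paper's produces the finer structure of $J(B)/H$ as a byproduct.
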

\begin{proof}
As in Lemma \ref{l.1}, if $J(B)\subseteq A$ then $A$ is of semisimple type. Otherwise, $J(B)\cap A=J(A)$. Let $K$ be the largest (two-sided) ideal of $B$ contained in $A$, and let $I\supset K$ be an ideal of $B$, minimal over $K$ (such an ideal exists since $0\neq B/K$ is finite dimensional). Then $I/K$ is a simple $B$-bimodule, and thus semisimple as a left and right $B$-module. By Lemma \ref{l.1}, it is also semisimple as a left and right $A$-module. Also, $K\subseteq A\cap I$, and so $I/A\cap I$ is left and right $A$-semisimple (it is a quotient of $I/K$ as $A$-bimodules). Note that $A+I=B$, since it is an intermediate subalgebra $A\subsetneq A+I\subseteq B$.\\
We have the following isomorphisms of $A$-bimodules
$$\frac{J(B)}{J(A)} \cong \frac{J(B)}{A\cap J(B)} \cong \frac{A+J(B)}{A} \cong \frac{B}{A} \cong \frac{A+I}{A} \cong \frac{I}{A\cap I} $$
Hence, $J(B)/J(A)$ is semisimple both as a left and right $A$-module. Therefore, $J(A)\cdot \frac{J(B)}{J(A)}=0$ so $J(A)\cdot J(B)\subseteq J(A)$. Similarly, $J(B)\cdot J(A)\subseteq J(A)$. But then $B\cdot J(A)=(A+J(B))\cdot J(A)= J(A)+J(B)\cdot J(A)=J(A)$ and similarly $J(A)\cdot B=J(A)$, which shows that $J(A)$ is in fact a two sided ideal of $B$. \\
Since $I/K$ is a simple $B$-bimodule, $B=A+J(B)$, and $J(B)$ acts trivially on $I/K$, it follows that $I/K$ is simple as an $A$-bimodule. Also, $I/A\cap I$ is a non-zero $A$-bimodule quotient of $I/K$, and so it is a simple $A$-bimodule. By the above isomorphism, so is $J(B)/J(A)$. Since $A\subset B$, $J(B)/J(A)$ is a simple $B$-bimodule as well. Therefore, the semisimplicity of $J(B)/J(A)$ as a left and right $B$-module also implies that $J(B)\cdot \frac{J(B)}{J(A)}=0$ so $J(B)^2\subseteq J(A)$. \\ 
Finally, denote $H=J(A)$. By the above, $H$ is an maximal $B$-sub-bimodule of $J(B)$. It is clear that $A'=A/H$ is a semisimple subalgebra of $B'=B/H$, and that $B'=A\oplus I$ where $I=J(B)/H$ has $I^2=0$, and this is a trivial extension. \\
Furthermore, assuming that $B/J(B)$ is separable, by the Wedderburn-Malcev theorem there is a sub-algebra $A_0$ of $B$ with $B=A_0\oplus J(B)$ (see e.g. \cite[Chapter 11]{Pi}) and by the same theorem, in the case when $A'$ is of split type, $A'=A/H$ and $(A_0\oplus H )/ H$ are conjugate inside $B'=B/H$ by some invertible $\overline{u}=u+H\in B/H$. Lifting back, this easily implies that $A$ and $A_0\oplus H$ are conjugate too.\\
If $A$ is of semisimple type, then the maximal subalgebras of $B/J(B)$ are in 1-1 correspondence with those of $A_0$, since the composition of maps $A_0\subset B\rightarrow B/J(B)$ is an isomorphism of algebras $A_0\cong  B/J(B)$. If $A$ is maximal of semisimple type, it contains $J(B)$, and it follows from this correspondence that $A/J(B)$ has the form $A/J(B)=A'\oplus J(B)/J(B)$ for some maximal subalgebra $A'\subset A_0$, so $A=A'\oplus J(B)$. But, within $A_0$, by Lemma \ref{l.3.1}, Theorem \ref{t.3.1} and Remark \ref{r.ss}, every such maximal subalgebra $C$ is conjugate to one of this form $A'$ by some invertible element $a\in A_0$; obviously, then the algebra $C\oplus J$ is conjugate to $A'\oplus J$ via $a$.
\end{proof}

In the particular case when $\KK$ is algebraically closed or at least each simple module $S$ is Schur ($\End(S)=\KK$, i.e. $\KK$ is a splitting field for $B/J(B)$), this theorem will take more precise forms. 

\begin{remark}\label{r.practical}
We note that, if $H$ is a maximal sub-bimodule of $J(B)$, then $J(B)/H$ is semisimple both as a left and as a right $B$-module, and so $J(B)^2\subseteq H$. Hence, in order to find such $H$, one needs to determine a maximal $B$-sub-bimodule (or, equivalently, $B/J(B)$-sub-bimodule) of $J(B)/J(B)^2$ and pull back in $B$. In the case of particular interest when $B/J(B)$ is separable, and $A_0$ is a subalgebra of $B$ with $B=A_0\oplus J$ as before, then to build maximal subalgebras of semisimple type we need to determine an $A_0$-sub-bimodule $H'$ of $J(B)/J(B)^2$, and then let $H$ be such that $H/J(B)^2=H'$. It is easy to note that such $H$ is in fact a two sided ideal of $B$. In particular, if the endomorphism ring of each simple $B$-module is $\KK$, then one can write $J(B)=J(B)^2\oplus T$ as $A_0$-bimodules, and one only needs to determine the maximal $A_0$-sub-bimodules of $T$. Throughout the rest of this paper, we say that $B$ is a {\bf{Schur}}, or {\bf{Schurian}}, $\KK$-algebra if $\End_A(S) = \KK$ for all simple $A$-modules $S$.
\end{remark}

\section{Semisimple Classification}   \label{s.3} 

\subsection{The Simple Case}

Example \ref{e.split} show us how maximal subalgebras of split type arise in general. In turn, Theorem \ref{t.gen} and Remark \ref{r.practical} show us that under relatively weak conditions, the problem of constructing them is equivalent to the problem of constructing maximal sub-ideals of the Jacobson radical. However, at this stage we have comparatively little information on maximal subalgebras of ``semsimple'' or ``separable'' type. To completely understand the classification of maximal subalgebras, we will need to further examine these subalgebras. Essentially, this means understanding maximal subalgebras of semisimple algebras. 

Some of our results hold in extra generality, and wherever possible, we have tried to state them under the weakest conditions possible. Before stating these results, however, it will be convenient to fix some notation for certain important matrix subalgebras:

\begin{definition}\label{d.but} 
Let $R$ be an arbitrary (non-commutative) ring, $n$ a natural number, and $\lambda = (\lambda_1,\ldots , \lambda_t)$ a composition of $n$, i.e. a collection of non-negative integers $\lambda_i$ such that $\lambda_1 + \ldots + \lambda_t = n$. Throughout, assume that $\lambda_i > 0$ for all $i$. Given such $\lambda$, one can decompose each $X \in M_n(R)$ as $X = (X_{ij})$, where $X_{ij}$ is a $\lambda_i\times\lambda_j$-matrix with entries in $R$. Let $B_{\lambda}(R)$ denote the collection of all such $X$ with $X_{ij} = 0$ whenever $j < i$. If the ring $R$ is understood, we write $B(\lambda) = B_{\lambda }(R)$. We call $B(\lambda )$ the ring of {\bf{block upper-triangular matrices}} corresponding to the composition $\lambda$.
\end{definition}   

\noindent {\bf{Note:}} As is convenient, we will use the frequency notation for a composition $\lambda$, i.e. we will write $\lambda = (1^{a_1} 2^{a_2}\cdots )$, where $a_k$ denotes the cardinality of the set $\{ i \in \NN \mid \lambda_i = k\}$. 
\bigskip

\noindent  As one would expect, the classification maximal subalgebras of semisimple algebras depends in good part on the maximal subalgebras of simple algebras. We will now construct the main classes of such algebras; the terminology chosen (type numbering) is done such that it agrees with that of \cite{Rac0}; there will be one additional new type of example.

\begin{example}[Maximal subalgebras of semisimple type 1 (or type S1)]
Let $D$ be a division ring ($\KK$-algebra), $n\geq 2$, and $1\leq k<n$. Then the $\KK$-subalgebra of block triangular matrices $B_{(k,n-k)}(D)$ is a maximal subring ($\KK$-subalgebra) of $M_n(D)$. Indeed, if $A'$ is an intermediate subalgebra and $x\in A'-B_{(k,n-k)}(D)$, then by subtracting a suitable block triangular matrix we may assume $x=\left(\begin{array}{cc} 0 & 0 \\ T & 0  \end{array}\right)$ with $T$ an $(n-k)\times k$ non-zero block. Then for $X\in M_{n-k}(D), Y\in M_k(D)$, 
$$\left(\begin{array}{cc} 0 & 0 \\ 0 & X  \end{array}\right)\left(\begin{array}{cc} 0 & 0 \\ T & 0  \end{array}\right)\left(\begin{array}{cc} Y & 0 \\ 0 & 0  \end{array}\right)=\left(\begin{array}{cc} 0 & 0 \\ XTY & 0  \end{array}\right)\in A'$$
But it is well-known that $M_{n-k,k}(D)$ is a simple $M_{n-k}(D)$-$M_{k}(D)$-bimodule, so $\left(\begin{array}{cc} 0 & 0 \\ * & 0  \end{array}\right)\subseteq A'$ which implies $A'=M_n(D)$. \\
We remark that these subalgebras were considered in \cite{Rac0,Rac} where it was proved they are maximal $Z(D)$-subalgebras; here we need that they are also maximal as $\KK$-subalgebras (and, in fact, they are maximal subrings).
\end{example}

\begin{example}[Maximal subalgebras of semisimple type 2 (or type S2)]
Let $D$ be a division ring ($\KK$-algebra), $n\geq 1$ and $Z(D)\subseteq F$ a minimal field extension contained in $M_n(D)$ (i.e. there are no intermediate subfields). Then its centralizer $C(F)$ is a maximal subring ($\KK$-subalgebra) of $M_n(D)$. Indeed, $C(F)$ contains $Z(D)$ so it is a $Z(D)$-subalgebra, and by \cite{Rac0}, it is a maximal subalgebra of $M_n(D)$. But any intermediate subring (or $\KK$-subalgebra) $C(A)\subseteq A' \subseteq M_n(D)$ contains $Z(D)$ and is a $Z(D)$-subalgebra, so maximality as a subring (resp. $\KK$-subalgebra) follows from maximality as a $Z(D)$-subalgebra. We note that in this case, as it is well known for simple subalgebras of central simple algebras, the algebra $C(F)$ is a simple $Z(D)$-subalgebra of $M_n(D)$ and $C^2(F)=F$.
\end{example}

\noindent $M_n(D)$ admits a third type of maximal subalgebra, not present in \cite{Rac0}, \cite{Rac}. Building up to it, we need a few remarks.

\begin{proposition}
Suppose $A$ is a maximal subring (subalgebra) of the algebra $B$, and $n\geq 1$. Then $M_n(A)$ is a maximal subring (subalgebra) of the algebra $M_n(B)$.
\end{proposition}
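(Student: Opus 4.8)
The plan is to argue by contradiction on the poset of intermediate rings: suppose $M_n(A)$ is \emph{not} maximal, so there is an intermediate subring (subalgebra) $M_n(A)\subsetneq C\subseteq M_n(B)$, and show $C=M_n(B)$. The essential tool is that the matrix units $e_{ij}$ (the matrix with $1$ in position $(i,j)$ and $0$ elsewhere) all lie in $M_n(A)$, hence in $C$, because $A$ contains $0$ and $1$; multiplying on the left and right by these idempotent ``permutation'' pieces lets me extract a single matrix entry and relocate it to any desired position. Concretely, for any $X=(x_{ij})\in M_n(B)$ one has $e_{1p}\,X\,e_{q1}=x_{pq}\,e_{11}$, and conversely $e_{i1}(b\,e_{11})e_{1j}=b\,e_{ij}$; these two identities are the only computations needed and drive the whole argument.

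First I would pick some $X\in C\setminus M_n(A)$, so that some entry $x_{pq}$ lies in $B\setminus A$. Since $e_{1p},e_{q1}\in M_n(A)\subseteq C$ and $C$ is a subring, the relocation identity gives $x_{pq}\,e_{11}=e_{1p}\,X\,e_{q1}\in C$. Next I would introduce the ``corner pullback'' set $A'=\{\,b\in B : b\,e_{11}\in C\,\}$, and verify that it is a subalgebra of $B$: it is clearly closed under addition and scalar multiplication, it contains $1$ (as $e_{11}\in C$), it contains $A$ (since $a\,e_{11}\in M_n(A)\subseteq C$ for $a\in A$), and it is closed under multiplication because $(b\,e_{11})(b'\,e_{11})=bb'\,e_{11}$, so $b,b'\in A'$ forces $bb'\in A'$. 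By the previous paragraph $x_{pq}\in A'$ while $x_{pq}\notin A$, so the inclusion $A\subseteq A'$ is proper.

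Now the maximality of $A$ in $B$ forces $A'=B$; that is, $b\,e_{11}\in C$ for \emph{every} $b\in B$. Applying the second identity, $b\,e_{ij}=e_{i1}(b\,e_{11})e_{1j}\in C$ for all $i,j$ and all $b\in B$, and since an arbitrary element of $M_n(B)$ is the finite sum $\sum_{i,j} b_{ij}\,e_{ij}$ of such pieces, we conclude $M_n(B)\subseteq C$, i.e.\ $C=M_n(B)$. This is the desired contradiction to $C\neq M_n(B)$, establishing maximality; the identical argument, using only ring operations and matrix units with entries in $\{0,1\}\subseteq A$, handles both the subring and the $\KK$-subalgebra versions simultaneously, and the case $n=1$ is trivial.

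The step requiring the most care, and the only genuine obstacle, is checking that $A'$ is a subalgebra rather than merely an additive subgroup: the multiplicative closure relies precisely on $e_{11}$ being idempotent so that the corner $e_{11}M_n(B)e_{11}\cong B$ is itself a ring, and this is where the unitality hypothesis $1\in A$ is used (both to ensure $e_{ij}\in M_n(A)$ and to get $1\in A'$). Everything else is a direct consequence of the two matrix-unit identities.
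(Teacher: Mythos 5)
Your proof is correct and follows essentially the same route as the paper's: extract an entry $x_{pq}\notin A$ into the $(1,1)$ corner using matrix units, invoke maximality of $A$ in $B$ to fill that corner with all of $B$, then redistribute via matrix units $e_{i1},e_{1j}$. Your explicit ``corner pullback'' subalgebra $A'=\{b\in B: b\,e_{11}\in C\}$ is just a cleaner formalization of the step the paper states more tersely (``since the subring of $B$ generated by $A$ and $x$ is $B$, it follows that $B\cdot E_{11}\subseteq A'$''), so the two arguments coincide in substance.
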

\begin{proof}
Let $X=(x_{ij})\notin M_n(A)$; we prove that the subring $A'$ generated by $M_n(A)$ together with $X$ is $M_n(B)$. There is some entry $x_{kl}=x\notin A$. Multiplying by appropriate matrix units $E_{ij}$ (which are in $M_n(A)$) we can assume $E_{kl}x\in A'$, and using appropriate permutation matrices, that $E_{11}x\in A'$. But $A\cdot E_{11}\subset A'$, and since the subring of $B$ generated by $A$ and $x$ is $B$, it follows that $B \cdot E_{11}\subseteq A'$. Again using permutation matrices, $B\cdot E_{ij}\subseteq A'$ for all $i,j$, which then implies the claim.
\end{proof}

\begin{example}[Maximal subalgebras of semisimple type 3 (or type S3)] 
Let $F\subseteq L$ be a minimal field extension, $n\geq 1$ and $D$ a central division $F$-algebra. Then the algebra (ring) $M_n(D)=F\otimes_F M_n(D)$ is a maximal $F$-subalgebra (subring) of $L\otimes_F M_n(D)$. First, since $F=F\otimes_F F$ is contained in $F\otimes_F M_n(D)$, we may work with subalgebras instead of subrings. The commutative diagram
$$\xymatrix{F\otimes_F M_n(D)\ar[r]^{\subseteq}\ar[d]_{\cong} & L\otimes_F M_n(D)\ar[d]^{\cong} \\
M_n(D) \ar[r]_{\subseteq} & M_n(L\otimes_F D)}$$
shows that it is enough to prove the minimality of the extension $M_n(D)\subseteq M_n(L\otimes_F D)$, and by the previous proposition, it suffices to show that $D=F\otimes_F D$ is maximal in $L\otimes_F D$. Consider $F\otimes_F D\subsetneq A'\subsetneq L\otimes_F D$ and let $x\in A'\setminus F\otimes_F D$ be an element of minimal tensor rank among elements in $A'\setminus F\otimes_F D$. Write $x=\sum\limits_{i=1}^n\lambda_i \otimes_F b_i$, where $n$ is the tensor rank of $x$, and assume $n>1$. Then the $\lambda_i$'s, and the $b_i$'s, are each linearly independent over $F$. By multiplying $x$ with $1\otimes_F b_1^{-1}$, we may assume $b_1=1\in D$, and $x=\lambda_1\otimes_F 1+\sum\limits_{i>1}\lambda_i\otimes_F b_i$. \\
For $b\in D$, the commutator $[1\otimes b,x]=\sum\limits_{i>1}\lambda_i\otimes_F [b,b_i]$ is in $A'$ and has tensor rank $<n$. Hence, $[1\otimes b,x]=1\otimes \varphi(b)$ for some uniquely determined element $\varphi(b)\in D$ (which depends on $b$). But since $\lambda_i$ are linearly independent over $F$, by basic tensor linear algebra, the equality $\sum\limits_{i>1}\lambda_i\otimes_F [b,b_i]=1\otimes \varphi(b)$ implies that for $i>1$ there are $\mu_i(b)\in F$ such that $[b,b_i]=\mu_i(b)\varphi(b)$ and $\sum\limits_{i>1}\lambda_i\mu_i(b)=1$. Again since the $\lambda_i$ are independent, such $\mu_i(b)$ are uniquely determined and they will not depend on $b$. Hence, we have $[b,b_i]=\mu_i \varphi(b)$. Note that $\mu_i\neq 0$ - otherwise, $[b,b_i]=0$ for all $b$ implies $b_i\in F$ ($D$ is central), so $b_i$ and $b_1$ are not independent, a contradiction. Also, $[b,\frac{1}{\mu_i}b_i-\frac{1}{\mu_j}b_j]=0$ for all $i,j$ and all $b\in D$, which again implies $\frac{1}{\mu_i}b_i-\frac{1}{\mu_j}b_j\in F$. Write $\frac{1}{\mu_i}b_i=\frac{1}{\mu_2}b_2+f_i$, $f_i\in F$. Then
\begin{eqnarray*} 
x & = & \lambda_1\otimes_F 1+\sum\limits_{i>1}\lambda_i\mu_i\otimes_F \frac{1}{\mu_i}b_i =\lambda_1\otimes_F 1+\sum\limits_{i>1}\lambda_i\mu_i\otimes_F (\frac{1}{\mu_2}b_2+f_i)\\
& = & \lambda_1\otimes_F 1+(\sum\limits_{i>1}\lambda_i\mu_i)\otimes_F \frac{1}{\mu_2}b_2+(\sum\limits_{i>2}\lambda_i\mu_if_i)\otimes_F 1 = \lambda\otimes_F 1+ 1\otimes_F a
\end{eqnarray*}
for suitable $\lambda\in L$ and $a\in D$. In particular, this means that $n=2$ to begin with. But $1\otimes_F a\in F\otimes_F D\subset A'$, so $\lambda\otimes_F 1\in A'-F\otimes_F D$, which contradicts the original assumption that $n>1$. It follows that $x$ has the form $x=q\otimes_F c$ to begin with, and then $x(1\otimes_F c^{-1})=q\otimes_F 1\in A'-F\otimes_F D$. We deduce that $q\notin F$, and so the set $L'=\{\lambda\in L\,|\,\lambda\otimes_F 1\in A'\}$ strictly contains $F$; it is also a subfield of $L$, and so $L'=L$ by the minimality of $F\subset L$. Finally, $L\otimes_F F\subseteq A'$ and $F\otimes_F D\subseteq A'$ implies $A'=L\otimes_F D$, and the proof is finished.
\end{example}

\begin{lemma} \label{l.3.1}
Let $D$ be a division ring containing $\KK$ with $\dim_{\KK}D < \infty$, and let $A \subset M_n(D)$ be a maximal $\KK$-subalgebra. Then $A$ is conjugate to a maximal subalgebra of one of the three types S1, S2, or S3. More precisely, exactly one of the following holds: 
\begin{enumerate} 
\item[(i)] $A$ is not semisimple and $A$ is conjugate to $B(k,n-k)$, for some $0 < k <n$ (so $A$ is of type ``S1"). 
\item[(ii)] $A$ is a simple subalgebra of $M_n(D)$, say $A\cong M_t(\Delta )$, for $\Delta$ a division ring with $\KK \subset Z(\Delta)$, and either:
\begin{itemize}
\item[(a)] The bicommutant of $A$ is $C^2(A)=A$, and $A$ is a maximal $Z(D)$-subalgebra of the central simple $Z(D)$-algebra $M_n(D)$; in this case, $A=C(F)$, where $F$ is a minimal field extension of $Z(D)$ contained in $M_n(D)$ (the algebra is of type ``S2"); or
\item[(b)] The bicommutant of $A$ is $C^2(A)=M_n(D)$, $Z(\Delta ) \subset Z(D)$ is a minimal field extension and $Z(D)\otimes Z(\Delta) A\cong Z(D)A=M_n(D)$, and the extension $A\subseteq M_n(D)$ is (canonically) isomorphic to the extension of algebras $Z(\Delta)\otimes_{Z(\Delta)} A\subseteq Z(D)\otimes Z(\Delta) A\cong Z(D)A$. Hence, the algebra is of type ``S3" in this case. 
\end{itemize}
\end{enumerate}
\end{lemma}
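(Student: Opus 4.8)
The plan is to split the argument according to whether $A$ is semisimple, and, in the semisimple case, to use the bicommutant $C^2(A)$ — whose collapse is forced by maximality — to separate types S2 and S3. Throughout I would regard $V = D^n$ as the simple left $M_n(D)$-module carrying its commuting right $D$-action, so that $M_n(D) = \End_D(V)$ and the $A$-submodules of $V$ that are simultaneously right $D$-submodules are exactly the $D$-subspaces stabilized by $A$. First suppose $A$ is not semisimple, so $J(A)\neq 0$. Then $W := J(A)\cdot V$ is a right $D$-subspace stabilized by $A$; it is proper by Nakayama's lemma (as $V$ is finitely generated over $A$ and $J(A)$ is nilpotent) and nonzero since $A$ acts faithfully on $V$. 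Choosing a $D$-basis adapted to $W$, the stabilizer $P_W=\{x\in M_n(D): xW\subseteq W\}$ becomes a proper block upper-triangular algebra $B(k,n-k)$ with $k=\dim_D W$, $0<k<n$. As $A\subseteq P_W\subsetneq M_n(D)$, maximality forces $A=P_W$, giving type S1; this also shows $A$ is non-semisimple here, so (i) and (ii) are mutually exclusive.

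Next, assuming $A$ semisimple, I would first show it is simple. If $A=\prod_{i=1}^r A_i$ with $r\geq 2$, the central idempotents $e_i$ are nonzero orthogonal idempotents of $M_n(D)$ summing to $1$, and $A\subseteq B:=\bigoplus_i e_iM_n(D)e_i$. Since each $e_iV\neq 0$, every off-diagonal block $e_iM_n(D)e_j\cong \Hom_D(e_jV,e_iV)$ is nonzero, so $B\subsetneq M_n(D)$ and maximality gives $A=B$; but the block-diagonal $B$ lies properly inside the proper algebra $\bigoplus_{i\leq j}e_iM_n(D)e_j$, contradicting maximality. Hence $r=1$ and $A\cong M_t(\Delta)$, with $F:=Z(A)=Z(\Delta)\supseteq\KK$. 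Then $C^2(A)$ is a subalgebra containing $A$, so by maximality $C^2(A)=A$ or $C^2(A)=M_n(D)$ — exactly the split between (a) and (b).

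For the bicommutant analysis set $k_0:=Z(D)=Z(M_n(D))$ and $C:=C(A)$. In case $C^2(A)=A$, put $A'=A\cdot k_0$; then $C(A')=C(A)\cap C(k_0)=C$, so $A\subseteq A'\subseteq C^2(A')=C^2(A)=A$ and therefore $k_0\subseteq A$. Now $A$ is a simple $k_0$-subalgebra containing the center, so the double centralizer theorem applies and gives $Z(C(F'))=F'$ for every intermediate field $k_0\subseteq F'\subseteq M_n(D)$. Since $F=Z(A)$ centralizes $A$ we have $A\subseteq C(F)$, and maximality gives $C(F)=A$ or $M_n(D)$; the latter would force $F=k_0$, making $A$ a proper central simple $k_0$-subalgebra, which is never maximal (it lies in $A\otimes_{k_0}C'$ for a proper maximal $C'\subsetneq C(A)$). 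Hence $A=C(F)$, and $F/k_0$ is minimal, since any intermediate $k_0\subsetneq F'\subsetneq F$ would give $C(F')=A$ and then $F'=Z(C(F'))=Z(A)=F$, a contradiction. This is type S2.

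Finally, in case $C^2(A)=M_n(D)$, we get $C(A)\subseteq C(C^2(A))=k_0$, and as $k_0\subseteq C(A)$ always, $C(A)=k_0$; hence $F=Z(A)\subseteq C(A)=k_0$, i.e. $Z(\Delta)\subseteq Z(D)$ and $A$ is central simple over $F$. Then $A\cdot k_0$ is the image of the multiplication map $A\otimes_F k_0\to M_n(D)$, which is injective because $A\otimes_F k_0$ is central simple over $k_0$, so $A\cdot k_0\cong A\otimes_F k_0$. By maximality $A\cdot k_0=A$ or $M_n(D)$; the former forces $F=k_0$ and then $A=M_n(D)$ by a dimension count, so $A\cdot k_0=M_n(D)$, i.e. $Z(D)\otimes_F A\cong Z(D)\,A=M_n(D)$, with $A\subseteq M_n(D)$ the canonical $F\otimes_F A\subseteq Z(D)\otimes_F A$; minimality of $Z(D)/F$ follows from the same dimension count applied to $A\cdot L'$ for intermediate fields $F\subseteq L'\subseteq k_0$. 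This is type S3. The main obstacle throughout is precisely this double-centralizer bookkeeping: one cannot invoke the double centralizer theorem until one knows $Z(D)\subseteq A$ (case a) or, dually, $Z(A)\subseteq Z(D)$ (case b), and it is exactly these containments — together with the fact that a proper central simple $Z(D)$-subalgebra is never maximal — that force the minimal-field-extension descriptions and pin down the types.
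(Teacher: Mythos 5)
Your proof is correct, and its skeleton matches the paper's: non-semisimple maximal subalgebras are trapped inside the stabilizer of the flag $0 \subset J(A)V \subset V$ and forced to equal a conjugate of $B(k,n-k)$; semisimple ones are shown to be simple; and the bicommutant dichotomy $C^2(A) \in \{A, M_n(D)\}$ (forced by maximality) separates S2 from S3, with your case (b) handled by essentially the same tensor-product/dimension-count argument as the paper. You diverge in two genuine ways. First, your reduction from semisimple to simple runs through the central idempotents of $A$: you sandwich $A$ between the block-diagonal algebra $\bigoplus_i e_iM_n(D)e_i$ and the strictly larger proper block-triangular algebra $\bigoplus_{i\le j} e_iM_n(D)e_j$, whereas the paper decomposes $V$ into $A$-isotypic components, uses Schur's lemma to see each component is a right $D$-subspace, and conjugates $A$ into a block-diagonal subalgebra; the arguments are close in spirit, but yours avoids the module-theoretic bookkeeping. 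Second, and more substantively, in case (a) the paper simply cites Racine \cite{Rac0} for the conclusion $A=C(F)$ with $F/Z(D)$ minimal, while you give a self-contained derivation from the double centralizer theorem: forcing $Z(D)\subseteq A$ via $C(A\cdot Z(D))=C(A)$, then playing maximality against $A\subseteq C(Z(A))$, and reading minimality of $F=Z(A)$ off $Z(C(F'))=F'$. This buys independence from the reference at the cost of one auxiliary claim you state too tersely: that a proper central simple $Z(D)$-subalgebra is never maximal. The claim is true but deserves a sentence, since a maximal subalgebra $C'$ of $C(A)$ could a priori fail to contain $Z(D)$ properly: note that $C(A)$ is central simple over $Z(D)$ of dimension at least $4$, hence noncommutative, so for $x\in C(A)\setminus Z(D)$ the commutative algebra $Z(D)[x]$ lies strictly between $Z(D)$ and $C(A)$, and then $A\subsetneq A\cdot Z(D)[x]\subsetneq M_n(D)$ by the equality $\dim A\cdot \dim C(A)=\dim M_n(D)$. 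With that sentence added, your argument is complete.
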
   

\begin{proof} 
If $A$ is not semisimple, then $J = J(A) \neq 0$. Since the simple left $M_n(D)$-module $M = D^n$ is $A$-faithful, $JM \neq 0$. But note that $JM$ is invariant under right multiplication by $D$, where we consider the usual right $D$-module structure of $M_n(D)$. Hence, $0 \subset JM \subset M$ is a filtration of right $D$-vector spaces and left $A$-modules. Choose a $D$-basis for $M$ compatible with this filtration, and let $X \in M_n(D)$ be the corresponding change-of-basis matrix. Then conjugation by $X$ is an algebra automorphism of $M_n(D)$ which carries $A$ into a subalgebra $A'$ of the subalgebra $B(k,n-k)$ of block upper-triangular matrices, where $k = \dim(JM{}_D)$. By maximality of $A$ and $A'$, $A' = B(k,n-k)$ and the claim follows.   

Assume now $A$ is semisimple. Let $M$ be as before. Then we can write $M$ as $\bigoplus_{i=1}^s{M_i}$, where we let $M_i$ be the direct sum of all submodules of $M$ isomorphic to the simple corresponding to the $i^{th}$ block in the Wedderburn decomposition of $A$. Note that for all $d \in D$, right multiplication by $d$ is an $A$-module endomorphism of $M$; in other words, it is an element of $\Hom_A(M,M) = \bigoplus_{i,j}{\Hom_A(M_i,M_j)} = \bigoplus_i{\End_A(M_i)}$, where the last equality follows by Schur's Lemma. In other words, right multiplication by $d$ restricts to an $A$-module endomorphism of $M_i$, for each $i$, and so each $M_i$ is a right $D$-vector space. Choosing a $D$-basis for $M$ which respects the direct sum decomposition $\bigoplus_i{M_i}$, we see that $A$ is conjugate to a block diagonal matrix subalgebra of $M_n(D)$; obviously, this is maximal only if $s = 1$. Since $M$ is $A$-faithful, this implies that $A$ has only one type of simple module up to isomorphism, and is therefore a simple algebra. 

To prove the final two claims, we first note that $C^2(A)$ is a $Z(D)$-subalgebra and $A \subset C^2(A) \subset M_n(D)$. If $A = C^2(A)$, then $A$ a $Z(D)$-subalgebra of $M_n(D)$. Since it is maximal as a $\KK$-subalgebra, it is also clearly maximal as a $Z(D)$-subalgebra, and the claim follows from \cite{Rac0}.

If $C^2(A) = M_n(D)$, then $C(A)=C^3(A) = C(M_n(D)) = Z(D)$, and hence $Z(\Delta ) = Z(A) = A \cap C(A) \subset C(A) = Z(D)$ (here, we abuse notation slightly by omitting the natural embedding maps $\Delta\hookrightarrow A=M_t(\Delta)$ and $D\hookrightarrow M_n(D)$; hence, we obtain an embedding $Z(\Delta)\hookrightarrow Z(D)$). This inclusion is strict, since otherwise $Z(D)$ would be contained in $A$ and we could reduce to the previous case. We show that this is a minimal field extension. Indeed, if $F$ is a field such that $Z(\Delta ) \subsetneq F \subsetneq Z(D)$, then we have a surjective map $F\otimes_{Z(\Delta )}A \rightarrow FA$ defined by $f\otimes a \mapsto fa \in M_n(D)$. But $A$ is central simple over its center, and $F$ is simple, so that $F\otimes_{Z(\Delta)}A$ is simple, and hence $FA\cong F\otimes_{Z(\Delta )}A\cong M_t(F)$. By $\KK$-dimension or $Z(\Delta)$-dimension, this algebra would then be strictly contained between $A$ and $M_n(D)$. By the same argument, it also follows that $Z(D)\otimes_{Z(\Delta)} A\cong M_n(D)$, and the last part is now automatic. 
\end{proof}  

\noindent Maximal subalgebras of central separable algebras over arbitrary commutative rings were studied in \cite{Rac}. 

\subsection{The Semisimple Case}

To get the complete picture, we now extend the results and describe maximal subalgebras of semisimple algebras.

\begin{definition} 
For a ring $R$ and natural numbers $n, k$, we set $\Delta^k(n,R)$ to be the image of the diagonal map $M_n(R) \rightarrow \prod_{i=1}^k{M_n(R)}$ taking $X \mapsto \prod_{i=1}^k{X}$. We call this the {\bf{diagonal ring}} corresponding to $\prod_{i=1}^k{M_n(R)}$. If $n$ and/or $R$ are understood, we will write $\Delta^k(n,R) = \Delta^k(R) = \Delta^k(n)$.
\end{definition}

\begin{lemma} \label{l.3.2}
Let $B$ and $C$ be (not necessarily finite-dimensional) $\KK$-algebras, with $A \subset B\times C$ a maximal subalgebra of $B\times C$. If the restriction of $\pi_B : B\times C \rightarrow B$ to $A$ is not surjective, then $\pi_C$ is surjective, $\pi_B(A)$ is a maximal subalgebra of $B$, and $A=\pi_B(A)\times C$.
\end{lemma}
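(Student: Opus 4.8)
The plan is to exploit maximality of $A$ by sandwiching it between $A$ itself and a carefully chosen proper subalgebra of $B\times C$. The natural candidate is $\pi_B(A)\times C$, so first I would claim that $A\subseteq \pi_B(A)\times C$. This is immediate coordinatewise, since any element of $A$ has the shape $(x,y)$ with $x=\pi_B((x,y))\in \pi_B(A)$ and $y\in C$. Moreover $\pi_B(A)\times C$ is a subalgebra of $B\times C$ (being a product of the subalgebra $\pi_B(A)\subseteq B$ with all of $C$), and it is a \emph{proper} subalgebra precisely because the hypothesis gives $\pi_B(A)\subsetneq B$: any $b\in B\setminus\pi_B(A)$ yields $(b,0)\notin \pi_B(A)\times C$.

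With this in place, maximality of $A$ forces the dichotomy $A=\pi_B(A)\times C$ or $\pi_B(A)\times C=B\times C$; the latter is excluded by properness, so $A=\pi_B(A)\times C$. This is exactly the third assertion. The surjectivity of $\pi_C|_A$ is then an immediate consequence: since $A=\pi_B(A)\times C$, every $c\in C$ occurs as the second coordinate of some element of $A$ (e.g. $(1_B,c)$, using $1_B\in\pi_B(A)$), so $\pi_C(A)=C$.

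It remains to verify that $\pi_B(A)$ is maximal in $B$, and I would again argue by sandwiching. Given any subalgebra $B''$ with $\pi_B(A)\subseteq B''\subseteq B$, multiplying through by $C$ gives $A=\pi_B(A)\times C\subseteq B''\times C\subseteq B\times C$, where $B''\times C$ is a subalgebra. Maximality of $A$ then yields $B''\times C=A$ or $B''\times C=B\times C$, i.e. $B''=\pi_B(A)$ or $B''=B$; and $\pi_B(A)\neq B$ by hypothesis, so $\pi_B(A)$ is a proper maximal subalgebra of $B$.

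I do not expect any genuine obstacle here: the entire argument is a formal consequence of the maximality hypothesis once the correct intermediate subalgebra $\pi_B(A)\times C$ is identified. The only points that require care are bookkeeping rather than mathematical depth---namely checking that $\pi_B(A)\times C$ and $B''\times C$ are honestly (unital) subalgebras of $B\times C$, and that $\pi_B(A)\times C$ is a proper subalgebra, which is where the non-surjectivity hypothesis on $\pi_B|_A$ enters in an essential way.
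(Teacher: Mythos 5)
Your proof is correct and follows essentially the same route as the paper: both arguments sandwich $A$ inside the proper subalgebra $\pi_B(A)\times C$, invoke maximality to conclude $A=\pi_B(A)\times C$ (whence surjectivity of $\pi_C|_A$), and then reuse maximality of $A$ on the chain $\pi_B(A)\times C\subseteq B''\times C\subseteq B\times C$ to get maximality of $\pi_B(A)$ in $B$.
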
 
\begin{proof} 
Consider the inclusions $A \subseteq \pi_B(A)\times \pi_C(A) \subseteq \pi_B(A)\times C \subseteq B\times C$. Since $\pi_B(A) \neq B$, $\pi_B(A)\times C$ is a proper subalgebra of $B\times C$ containing $A$. By maximality, $A = \pi_B(A)\times \pi_C(A) = \pi_B(A)\times C$, which implies that $\pi_C$ is surjective. If $X$ is a $\KK$-algebra with $\pi_B(A) \subset X \subset B$, then the containment $A = \pi_B(A)\times C \subset X\times C \subset B\times C$ implies $\pi_B(A) = X$ or $X = B$, and the claim follows. 
\end{proof}   

\begin{proposition} 
Let $D$ be a division ring, with $\KK \subset D$ a field contained in the center of $D$. Suppose that $A \subset M_n(D)\times M_n(D)$ is a maximal $\KK$-subalgebra, such that each projection is surjective when restricted to $A$. Then there is a $\KK$-algebra automorphism $\alpha : M_n(D)\rightarrow M_n(D)$ such that $A = [\id_{M_n(D)}\times \alpha ](\Delta^2(n,D) )$. If $\KK = Z(D)$ and $\dim_{\KK}D<\infty$, then $A$ is conjugate to $\Delta^2(n,D)$ under an inner automorphism.
\end{proposition}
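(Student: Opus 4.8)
The plan is to recognize $A$ as a Goursat-type ``graph'' subalgebra and then to linearize the resulting automorphism via Skolem--Noether. Write $\pi_1,\pi_2 : M_n(D)\times M_n(D)\to M_n(D)$ for the two projections, both surjective on $A$ by hypothesis. First I would form the two ``kernels'' $N_1=\{x : (x,0)\in A\}$ and $N_2=\{y : (0,y)\in A\}$. Using that $\pi_1(A)=\pi_2(A)=M_n(D)$, a direct computation (multiplying an element $(x,0)\in A$ on either side by an arbitrary element of $A$, whose first coordinate ranges over all of $M_n(D)$) shows that $N_1$ and $N_2$ are two-sided ideals of $M_n(D)$. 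Since $M_n(D)$ is simple, each $N_i$ is either $0$ or all of $M_n(D)$.

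The key observation is that $N_i=M_n(D)$ would force $A$ to be everything: if $N_1=M_n(D)$, then $M_n(D)\times 0\subseteq A$, and combined with surjectivity of $\pi_2$ this gives $A=M_n(D)\times M_n(D)$, contradicting the properness of the maximal subalgebra $A$. Hence $N_1=N_2=0$. But $N_1=\ker(\pi_2|_A)$ and $N_2=\ker(\pi_1|_A)$, so both restricted projections are injective; being also surjective, they are $\KK$-algebra isomorphisms $A\xrightarrow{\sim} M_n(D)$. Setting $\alpha=(\pi_2|_A)\circ(\pi_1|_A)^{-1}$, which is a $\KK$-algebra automorphism of $M_n(D)$, I obtain $A=\{(x,\alpha(x)) : x\in M_n(D)\}=[\id_{M_n(D)}\times\alpha](\Delta^2(n,D))$, which is the first assertion. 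Note that maximality is not really needed beyond properness for this step.

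For the second assertion, assume $\KK=Z(D)$ and $\dim_{\KK}D<\infty$, so that $M_n(D)$ is a finite-dimensional central simple $\KK$-algebra. By the Skolem--Noether theorem every $\KK$-algebra automorphism of $M_n(D)$ is inner, so $\alpha(x)=uxu^{-1}$ for some unit $u\in M_n(D)$. Conjugating $A$ by the unit $(1,u^{-1})\in M_n(D)\times M_n(D)$ sends $(x,\alpha(x))=(x,uxu^{-1})$ to $(x,u^{-1}(uxu^{-1})u)=(x,x)$, so this inner automorphism of $M_n(D)\times M_n(D)$ carries $A$ onto $\Delta^2(n,D)$, as desired.

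The arguments are essentially routine; the only points requiring care are verifying that $N_1,N_2$ are genuinely two-sided ideals (which is exactly where surjectivity of \emph{both} projections is used) and confirming that the conjugating element $(1,u^{-1})$ really is a unit of the product algebra. I expect the main conceptual step to be the reduction to a graph via the ideal dichotomy forced by simplicity of $M_n(D)$, with Skolem--Noether doing the remaining work in the central case.
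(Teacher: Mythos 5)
Your proof is correct and follows essentially the same route as the paper's: the same simplicity argument (a nonzero element $(0,H)\in A$, multiplied by elements of $A$ whose coordinates sweep out $M_n(D)$, forces $\{0\}\times M_n(D)\subseteq A$ and hence $A$ equal to the whole product) shows $A$ is the graph of an automorphism, and Skolem--Noether finishes the central case. The only difference is bookkeeping: you package the argument Goursat-style (the kernels of the restricted projections are two-sided ideals, hence zero by simplicity and properness, so $\alpha=(\pi_2|_A)\circ(\pi_1|_A)^{-1}$ is automatically an isomorphism), whereas the paper constructs $\alpha(X)$ pointwise and verifies uniqueness, linearity, multiplicativity, and bijectivity one by one; the paper's proof also establishes along the way that $\Delta^2(n,D)$ is itself maximal, a fact not needed for the stated implication but used later in its classification theorem.
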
 
\begin{proof} 
We first show that $\Delta^2(n,D)$ is a maximal subalgebra of $M_n(D )\times M_n(D)$. If there is algebra $\Delta^2(n,D) \subsetneq A \subsetneq M_n(D )\times M_n(D)$, then $A$ contains an element $(X, Y)$, with $X\neq Y$. Since $\Delta^2(n,D) \subset A$, then $A$ contains the element $(H,0)$ with $H=X-Y\neq 0$. It follows that $(Z,Z)\cdot (H,0)\in A$ and $(H,0)\cdot(Z,Z)\in A$ for all $Z\in M_n(D)$, so $M_n(D)HM_n(D)\times \{0\} =M_n(D)\times \{0\}\subseteq A$. Similarly $\{0\}\times M_n(D) \subset A$; but then $A = M_n(D )\times M_n(D )$, a contradiction. So $\Delta^2(n,D)$ is maximal in $M_n(D)\times M_n(D)$. \\
By hypothesis, for each $X \in M_n(D)$, there is $\alpha (X)$ such that $(X, \alpha (X)) \in A$. Note that such $\alpha(X)$ is unique:  $(X,Z) \in A$ with $Z\neq X$, then $(0,\alpha(X)-Z)\in A$ with $H=\alpha(X)-Z\neq 0$, and we may repeat the argument above: using surjectivity of the projections, for every $Z_1,Z_2\in M_n(D)$, there are elements $(Z_1',Z_1), (Z_2',Z_2)\in A$ so $(0,Z_1HZ_2)=(Z_1',Z_1)\cdot(0,H)\cdot(Z_2',Z_2)\in A$ implies $\{0\}\times M_n(D)\subseteq A$, and similarly $M_n(D)\times \{0\}\subseteq A$, leading to a contradiction. The uniqueness of $\alpha(X)$ now easily implies that $\alpha$ is linear, and in fact an algebra endomorphism (since $(X,\alpha (X))\cdot (Y, \alpha (Y)) = (XY,\alpha(X)\alpha(Y))$). Moreover, $\alpha$ is injective since otherwise some element of the form $(X,0)$ would belong to $A$; surjectivity follows (or can be deduced applying the surjectivity of the first projection). Hence $\alpha$ is an automorphism and the final claim follows by Skolem-Noether. 
\end{proof}

\begin{theorem}\label{t.3.1}  
Let $A \subset B = \prod_{i=1}^t{B_i}$ be a maximal subring (resp. $\KK$-subalgebra) of $B$, where $B_i = M_{n_i}(D_i)$ for some division ring (resp. $\KK$-algebra). Furthermore, suppose that the restriction to $A$ of the projection map $\pi_i : B \rightarrow B_i$ is surjective, for each $i\le t$. Then there exists a pair $i \neq j$ such that $B_i = B_j$, and there is a ring (resp. $\KK$-algebra) automorphism $\Phi$ of $B$, such that $\Phi(A)$ is the direct product of the maximal diagonal subring (resp. $\KK$-subalgebra) $\Delta^2(B_i)\subset B_i\times B_i=B_i\times B_j$ with the ring (resp. $\KK$-algebra) $\ds \prod_{k\not\in \{ i,j \} }{B_k}$. Furthermore, if  all $B_i$ are central $\KK$-algebras, then $\Phi$ can be chosen inner. 
\end{theorem}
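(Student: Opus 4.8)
The plan is to treat $A$ as a subdirect product of the simple factors $B_i$ and to use a Chinese-Remainder-type argument to force two of the factors to be ``glued,'' after which maximality pins down $A$ exactly.

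First I would set $N_k = \ker(\pi_k|_A)$ for each $k$. Since $\pi_k|_A$ is surjective onto the simple algebra $B_k$, each $N_k$ is a maximal two-sided ideal of $A$ with $A/N_k \cong B_k$, and $\bigcap_k N_k = 0$ because $A$ embeds in $B = \prod_k B_k$ via $a \mapsto (\pi_k(a))_k$. If the $N_k$ were pairwise distinct, then, being maximal, they would be pairwise comaximal, so the Chinese Remainder Theorem would give $A \cong \prod_k A/N_k = \prod_k B_k = B$ through the inclusion map itself, forcing $A = B$ and contradicting properness of the maximal subalgebra. Hence some pair $i \neq j$ satisfies $N_i = N_j$; in particular $B_i \cong A/N_i = A/N_j \cong B_j$, so $n_i = n_j$ and $D_i \cong D_j$, and after fixing an isomorphism I identify $B_i = B_j = M_{n_i}(D_i)$.

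Next, using $N_i = N_j$, I would produce the gluing isomorphism: for $b \in B_i$ choose $a \in A$ with $\pi_i(a) = b$ and set $\psi(b) = \pi_j(a)$. This is well-defined (if $\pi_i(a) = \pi_i(a')$ then $a - a' \in N_i = N_j$, so $\pi_j(a) = \pi_j(a')$), is a $\KK$-algebra automorphism of $M_{n_i}(D_i)$, and satisfies $\pi_j(a) = \psi(\pi_i(a))$ for all $a \in A$. I then define $\Phi \in \Aut(B)$ to act by $\psi^{-1}$ in the $j$-th coordinate and by the identity in every other coordinate. For $a \in A$, the $j$-th coordinate of $\Phi(a)$ becomes $\psi^{-1}(\pi_j(a)) = \pi_i(a)$, which equals its $i$-th coordinate; hence $\Phi(A)$ is contained in the equalizer $E := \{b \in B : \pi_i(b) = \pi_j(b)\}$, which is precisely $\Delta^2(n_i,D_i) \times \prod_{k \neq i,j} B_k$ and is a proper subalgebra of $B$ (it excludes, e.g., any $b$ with $\pi_i(b)=1$, $\pi_j(b)=0$). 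Since $\Phi$ is an automorphism, $\Phi(A)$ is again a maximal subalgebra, so from $\Phi(A) \subseteq E \subsetneq B$ maximality immediately yields $\Phi(A) = E$, the desired form. This step also silently rules out three or more coinciding kernels, since any further gluing relation would force $\Phi(A)$ strictly inside $E$.

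Finally, for the central case I would invoke Skolem--Noether: when $B_i = B_j$ is a central simple $\KK$-algebra, the $\KK$-automorphism $\psi^{-1}$ is inner, say conjugation by a unit $u \in B_j$, so $\Phi$ is conjugation by $(1,\ldots,1,u,1,\ldots,1) \in U(B)$ and is therefore inner. The individual computations are routine; the part I expect to require the most care is the bookkeeping that makes $\Phi$ a genuine automorphism of $\prod_k B_k$ --- this is what forces the identification $B_i = B_j$ and the correct placement of $\psi^{-1}$ --- together with the clean observation that the lone comaximality failure detected by the Chinese Remainder Theorem is exactly what maximality converts into the diagonal description.
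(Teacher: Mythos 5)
Your proof is correct, and it takes a genuinely different route from the paper's. The paper argues representation-theoretically: it restricts the simple $B_i$-modules $S_i$ to $A$, shows each remains simple and that $A$ is semisimple, then counts isomorphism classes among $S_1,\ldots,S_t$, using Schur's lemma to compute endomorphism rings and multiplicities (if all $S_i$ were pairwise non-isomorphic, the Wedderburn decomposition of $A$ would force $A=B$), and finally invokes the maximality of the diagonal subalgebra, proved in the Proposition immediately preceding the theorem, to show that exactly one pair of factors is glued; the automorphism $\Phi$ is then assembled from the projection isomorphisms. You replace this machinery with the kernels $N_k=\ker(\pi_k|_A)$: these are maximal two-sided ideals with $\bigcap_k N_k=0$, so if they were pairwise distinct (hence pairwise comaximal) the Chinese Remainder Theorem would make the inclusion $A\hookrightarrow B$ surjective; the coincidence $N_i=N_j$ then yields the gluing isomorphism $\psi$, and maximality of $\Phi(A)$ immediately identifies $\Phi(A)$ with the equalizer subalgebra. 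These are two views of the same dichotomy, since $N_k$ is the annihilator in $A$ of $S_k$ and $N_i=N_j$ exactly when $S_i\cong S_j$ as $A$-modules. Your version is shorter, purely ring-theoretic, works verbatim in the subring setting, and obtains the maximality of $\Delta^2(B_i)\times\prod_{k\notin\{i,j\}}B_k$ as an output rather than requiring it as an input; the paper's version produces structural by-products (semisimplicity of $A$, identification of its simple modules, their endomorphism rings and multiplicities) that fit its broader representation-theoretic aims. Two details you should make explicit in a final write-up: the CRT you invoke must be the two-sided-ideal version valid over non-commutative rings (distinct maximal two-sided ideals are automatically comaximal, and the standard product-of-ideals argument gives $N_1+\bigcap_{k\geq 2}N_k=A$), and the identification $B_i=B_j$ uses the uniqueness part of Artin--Wedderburn to pass from $M_{n_i}(D_i)\cong M_{n_j}(D_j)$ to $n_i=n_j$ and $D_i\cong D_j$. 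Both proofs conclude the central case identically, via Skolem--Noether.
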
 

\begin{proof} 
Let $S_i$ be the (unique up to isomorphism) simple $B_i$-module. Then $S_i$ is a $B$-module in a natural way, and so an $A$-module by restriction via $A \subset B$. Let $S \le S_i$ be any non-zero $A$-submodule of $S_i$. Since $\pi_i(A) = B_i$, for any $x \in B_i$ we can find an $a \in A$ such that $\pi_i(a) = x$. Therefore, $xS = \pi_i(a)S = aS \subset S$, and so $S$ is a non-zero $B_i$-submodule of $S_i$, and hence $S = S_i$. This shows that $S_i$ is a simple $A$-module. But the inclusion $A \hookrightarrow B$ is an embedding of $A$-modules. Since $\ds B = \bigoplus_{i}{S_i^{n_i}}$ as a $B$-modules and $A$-modules, and $S_i$ are simple $A$-modules, it is a semisimple $A$-module, and hence $A$ is semisimple too as a left $A$-module. 

Note that since $A \subset B$, the $A$-modules $S_1, \ldots , S_t$ exhaust all simple $A$-module (up to isomorphism). We claim that amongst the $S_1$, \ldots , $S_t$, there are exactly $t-1$ isomorphism types of simple $A$-modules. Let $m$ be the number of distinct isomorphism classes of $A$-modules amongst the $S_1$, $\ldots , S_t$. To begin, we claim $m < t$. Indeed, if the $S_i$'s were all non-isomorphic $A$-modules, then the Wedderburn decomposition of $A$ would be $\ds A = \prod_{i=1}^t{M_{d_i}(\tilde{D}_i)}$, where $\tilde{D}_i = \Endo_A(S_i)^{\op}$ and $d_i$ is the number of times $S_i$ appears in the decomposition of $A$ as a module over itself. We claim that $\tilde{D}_i = D_i$ and $d_i = n_i$. It is clear that $D_i^{\op} = \Endo_{B}(S_i)= \Endo_{B_i}(S_i) \subset \Endo_{A}(S_i) = \tilde{D}_i^{\op}$. To prove the reverse inclusion, let $\varphi \in D_i^{\op} = \Endo_B(S_i) = \Endo_{B_i}(S_i)$. Pick $X \in B_i$ and $\tilde{X} \in A$ such that $\pi_i(\tilde{X}) = X$. Then if $v \in S_i$, $\varphi (Xv) = \varphi (\tilde{X}v) = \tilde{X}\varphi (v) = X\varphi (v)$, so that in fact $\varphi \in \Endo_B(S_i)$, which implies $\tilde{D}_i^{\op} = D_i^{\op}$ and hence $\tilde{D}_i = D_i$. That $d_i = n_i$ follows immediately from the surjectivity of the projections. Hence $\ds A = \prod_{i=1}^t{M_{d_i}(\tilde{D}_i)} = \prod_{i=1}^t{M_{n_i}(D_i)} = B$, contradicting the properness of $A$. From this, it follows that $m< t$. 

To see $m \geq t-1$, find a partition $(t_1,\ldots , t_m)$ of $t$ with $t_1 \geq t_2 \geq \ldots \geq t_m$ such that, after possibly permuting the matrix factors $B_i$, the modules $S_1,\ldots , S_{t_1}$ represent the first isomorphism class of $A$-modules, $S_{t_1+1},\ldots , S_{t_1+t_2}$ represent a distinct isomorphism class, etc. By the above argument, $t_1 \geq 2$. We claim that $t_1 = 2$. Indeed, since $S_1 \cong S_j$ for $j \le t_1$, we also have $D_1 = \Endo_A(S_1)^{\op} \cong \Endo_A(S_j)^{\op} = D_j$ for $j \le t_1$. Using the surjectivity of the projections, $n_1 = n_j$ for $j \le t_1$ as well. 

Consider a decomposition isomorphism $A=A'\oplus I$, where $A'$ is the block corresponding to the simple isomorphic $A$-modules $S_j$ with $j\leq t_1$. The isomorphim of left $A$-modules $S_1\cong S_j$ for $j\leq t_1$ implies $\ker({\pi_1}{\vert_{A}})=\ker({\pi_j}{\vert_{A}})=I$ and the surjectivity of these projections then shows that the maps $\Phi_j=\pi_j\vert_{A'}:A'\rightarrow M_{n_j}(D_j)=M_{n_1}(D_1)$ are bijective. Moreover, considering $A'$ as a block of $A$ with its algebra (ring) strucutre, the maps $\Phi_j$ become isomorphisms of algebras; they are also unital, since if $1=e+f\in A'+I$ ($e\in A', f\in I$), then $\pi_j(f)=0$ so $\pi_j(e)=\pi_j(1)=I\in M_{n_1}(D_1)$. Thus, applying the automorphism $\prod\limits_{j=1}^{t_1}(\Phi_1\circ \Phi_j^{-1})\times \prod\limits_{j>t_1}{\rm Id}$ of the algebra $B=\prod\limits_{j=1}^t B_j$, we see that the algebra $A$ is sent to the algebra $\ds \Delta^{t_1}(n_1,D_1)\times \prod_{j > t_1}{B_j}$. The latter algebra is maximal if and only if $t_1 = 2$, which implies the claim. By definition, we then have $t_i \le 2$ for all $i \le m$. But by a similar argument, if $t_2 = 2$ then we can find an automorphism of $B$ which carries $A$ into the subalgebra $\ds \Delta^{2}(n_1,D_1)\times \Delta^2(n_3,D_3) \times \prod_{j > t_2}{B_j}$, which is not maximal in $B$. Hence $t_2 = 1$, which implies $t_j = 1$ for all $j \geq 2$. 

The last statement is again a consequence of Skolem-Noether.
\end{proof}  

\begin{remark} \label{r.ss}
Suppose that $B$ is a semisimple $\KK$-algebra whose simples are Schur, with Wedderburn decomposition $B \cong \prod_{i=1}^t{M_{n_i}(\KK )}$. If $A$ is a maximal subalgebra of $B$ and each projection map is surjective, then there exist indices $i\neq j$ such that $n_i = n_j$ and $A$ is conjugate to $\Delta^2(n_i, \KK) \times \prod_{k \not\in \{ i,j \}}{M_{n_k}(\KK)}$. Otherwise $A$ is, up to conjugation (up to an inner automorphism), the product of a maximal subalgebra of $M_{n_i}(\KK)$ with the other matrix factors of $B$. Setting $n = n_i$, we see from the simple classification that since $A$ is necessarily a $Z(\KK ) = \KK$-subalgebra of $M_n(\KK)$, $A$ is either of type S1 or S2. If $\KK$ is also an algebraically closed field, then only subalgebras of type S1 are possible.
\end{remark}

We summarize the results to describe maximal subalgebras of semisimple type in arbitrary finite dimensional algebras.

\begin{corollary}\label{c.ss}
Maximal subalgebras $A$ of semisimple type of a finite dimensional $\KK$-algebra $B$ are in 1-1 correspondence with subalgebras $A'$ of $B/J(B)$, which in turn are described by Remark \ref{r.ss}. If, moreover, $B/J(B)$ is separable, and $A_0$ is a subalgebra of $B$ with $B=A_0\oplus J$ and $A_0=\prod\limits_{i}M_{n_i}(\Delta_i)$, then any maximal subalgebra $A$ of $B$ of semisimple type is conjugate to one of the form $A'\oplus J$ where either $A'=\Delta^2(n_i, \KK) \times \prod_{k \not\in \{ i,j \}}{M_{n_k}(\KK)}$ for some $i,j$ with $n_i=n_j$, or $A'=T \times \prod_{k\neq i}M_{n_k}(\Delta_k)$, where $T$ is a maximal subalgebra of $M_{n_i}(\Delta_i)$ of type S1, S2 or S3. 
\end{corollary}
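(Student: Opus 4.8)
The plan is to assemble this corollary from the structural results already established, treating its two assertions in turn. For the first, I would invoke Lemma \ref{l.1}: a maximal subalgebra $A\subset B$ is of semisimple type precisely when $J(B)\subseteq A$, and in that case $A/J(B)$ is a maximal subalgebra of $B/J(B)$. The correspondence theorem for the ideal $J(B)$ gives an inclusion-preserving bijection between subalgebras of $B$ containing $J(B)$ and subalgebras of $B/J(B)$, under which maximal subalgebras correspond to maximal subalgebras; this yields the claimed $1$--$1$ correspondence, and Remark \ref{r.ss} then records the possibilities on the $B/J(B)$ side in the Schur case.

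For the second assertion, I would first apply Wedderburn--Malcev (available since $B/J(B)$ is separable) to fix $A_0$ with $B=A_0\oplus J$ and $A_0\cong B/J$ via the canonical projection. By Theorem \ref{t.gen}(ii), any maximal subalgebra $A$ of semisimple type has the form $A=A'\oplus J$ with $A'$ a maximal subalgebra of $A_0=\prod_i M_{n_i}(\Delta_i)$, so the task reduces to describing $A'$ inside this product of simple algebras. Here I would split into two mutually exclusive cases according to the restricted projections $\pi_i|_{A'}$. If every $\pi_i|_{A'}$ is surjective, Theorem \ref{t.3.1} applies directly and produces, after an automorphism of $A_0$, the diagonal form $\Delta^2(n_i,\Delta_i)\times\prod_{k\not\in\{i,j\}}M_{n_k}(\Delta_k)$ for a suitable pair $i\neq j$ with isomorphic factors. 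If instead some $\pi_i|_{A'}$ fails to be surjective, then Lemma \ref{l.3.2} forces $A'=\pi_i(A')\times\prod_{k\neq i}M_{n_k}(\Delta_k)$ with $T:=\pi_i(A')$ maximal in the single factor $M_{n_i}(\Delta_i)$; note this case automatically makes every other projection surjective, so at most one projection can fail, and the dichotomy is genuinely exhaustive. Lemma \ref{l.3.1} then identifies $T$ as one of the types S1, S2, or S3, giving the second alternative.

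To finish, I would lift the normalization back from $A_0$ to $B$. Any conjugating unit $u$ witnessing the canonical form of $A'$ inside $A_0$ lies in $U(A_0)\subseteq U(B)$, and since $J=J(B)$ is a two-sided ideal we have $uJu^{-1}=J$; hence conjugation by $u$ carries $A=A'\oplus J$ to $C\oplus J$, where $C$ is the chosen canonical form of $A'$. By Skolem--Noether the relevant automorphisms of $A_0$ can be taken inner when the division factors are central, in particular when all $\Delta_i=\KK$, which is the Schur situation recorded in Remark \ref{r.ss}.

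I expect the only genuinely delicate point to be this last one: ensuring that the automorphisms supplied by Theorem \ref{t.3.1} and Lemma \ref{l.3.1} are inner (so that one obtains honest conjugacy rather than mere isomorphism) and that they extend to $B$ while fixing $J$. Everything else is bookkeeping layered on top of the simple and semisimple classifications already proved, together with the exhaustiveness of the surjective/non-surjective dichotomy for the projections, which Lemma \ref{l.3.2} guarantees admits at most one failing factor.
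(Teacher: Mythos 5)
Your proposal is correct and follows essentially the same route as the paper, whose proof simply cites Lemma \ref{l.3.1}, Theorem \ref{t.3.1}, Remark \ref{r.ss}, and (for the separable case) Theorem \ref{t.gen}: your surjective/non-surjective dichotomy via Lemma \ref{l.3.2} and the lifting of the conjugating unit from $U(A_0)$ to $U(B)$ are exactly the arguments embedded in the paper's proof of Theorem \ref{t.gen}(ii) and in Remark \ref{r.ss}. Your attention to when the automorphism supplied by Theorem \ref{t.3.1} is inner (central factors, in particular the Schur case $\Delta_i=\KK$) is a worthwhile refinement, as this is precisely why the diagonal alternative in the corollary is stated with $\KK$-coefficients.
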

\begin{proof}
This follows from Lemma \ref{l.3.1}, Theorem \ref{t.3.1} and Remark \ref{r.ss}, and when $B/J(B)$ is separable, Theorem \ref{t.gen} is used.
\end{proof}

\noindent We end this section with a result showing that the maximal algebras of semisimple type produce separable extensions in many interesting cases, thus justifying the alternate name of maximal algebras of ``separable type".

\begin{proposition}
Let $B$ be a finite dimensional algebras such that $B/J(B)$ is separable. Then if $A$ is any subalgebra $A$ of semisimple type, then the extension $A\subset B$ is separable.
\end{proposition}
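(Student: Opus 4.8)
The plan is to recall the characterization of separable extensions from the earlier Lemma: the extension $A\subset B$ is separable if and only if the multiplication map $u:B\otimes_A B\to B$ is a split epimorphism of $B$-bimodules, equivalently, there is a \emph{separability idempotent} $e\in B\otimes_A B$ with $u(e)=1$ and $be=eb$ for all $b\in B$. Since $A$ is of semisimple type, by definition $J(B)\subseteq A$, so $A$ and $B$ share the same Jacobson radical: $J(A)=J(B)$ and $A/J(B)$ is a (maximal) subalgebra of the semisimple algebra $B/J(B)$. The guiding idea is that separability is controlled ``modulo the radical'' together with the nilpotence of $J(B)$, so I would first establish the semisimple-level statement and then lift the separability idempotent through the radical filtration.

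First I would treat the semisimple quotient. Write $\bar{A}=A/J(B)\subseteq \bar{B}=B/J(B)$. Because $B/J(B)$ is a separable $\KK$-algebra and $\bar{A}$ is a subalgebra containing no extra radical (it is itself semisimple, being a subalgebra of a separable, hence semisimple, algebra), I would argue that the extension $\bar{A}\subseteq\bar{B}$ is a separable extension of semisimple algebras. Concretely, using the explicit description from Corollary \ref{c.ss} and Remark \ref{r.ss}, a maximal subalgebra of semisimple type is, up to conjugation, a product of matrix blocks in which one factor is replaced either by a block-triangular algebra $B(k,n-k)$ (type S1), a centralizer $C(F)$ (type S2/S3), or a diagonal $\Delta^2$; in each case one checks directly that $\bar B$ is separable over $\bar A$ by exhibiting the idempotent, or more cleanly by invoking that any extension of separable $\KK$-algebras is automatically separable (both $\bar A$ and $\bar B$ being separable forces $\bar B$ to be a separable $\bar A$-algebra). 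This yields a separability idempotent $\bar e\in \bar B\otimes_{\bar A}\bar B$ with $u(\bar e)=1$ and $\bar e$ central over $\bar B$.

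The main obstacle, and the technical heart of the argument, is lifting $\bar e$ from $B/J(B)\otimes_{A/J(B)}B/J(B)$ back to an honest separability idempotent in $B\otimes_A B$. Here I would use that $J:=J(B)=J(A)$ is a common nilpotent ideal, and that separability is preserved under nilpotent extensions: the surjection $B\otimes_A B\to \bar B\otimes_{\bar A}\bar B$ has nilpotent kernel (built from $J\otimes_A B + B\otimes_A J$), so an idempotent-style obstruction argument lets one correct a naive lift $\tilde e$ of $\bar e$ term by term up the radical filtration until the two defining conditions $u(e)=1$ and $be=eb$ hold on the nose. Since $u(\tilde e)\equiv 1\pmod{J}$ is a unit in $B$, rescaling gives $u(e)=1$, and the commutation defect $b\tilde e-\tilde e b$ lives in the nilpotent kernel and can be killed inductively. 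Equivalently, and perhaps more economically for the write-up, I would phrase this via the functorial criterion of Rafael's Theorem: the restriction functor $\Res_A^B$ is separable because the counit of the $((-)\otimes_A B,\Res_A^B)$ adjunction cosplits, and this cosplitting is detected after reducing modulo the nilpotent radical, where it holds by separability of $\bar A\subseteq\bar B$. I expect the bookkeeping of this lift — verifying $B$-bimodule naturality of the corrected idempotent at each stage of the filtration — to be the only genuinely delicate point; everything else is a direct consequence of the semisimple classification already established in Corollary \ref{c.ss} and the standard Maschke-type machinery recalled in Section 1.
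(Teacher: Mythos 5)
Your first step reaches the right conclusion, but partly for the wrong reasons. Two side claims are false: a maximal subalgebra $\bar A$ of semisimple type need not be semisimple (type S1 block-triangular subalgebras $B(k,n-k)$ have nonzero radical), and consequently $J(A)=J(B)$ can fail (one only has $J(B)\subseteq A$, hence $J(B)\subseteq J(A)$). Neither error is fatal here, because the only fact needed at the semisimple level is the standard one you also cite: if $\bar B$ is separable over $\KK$, then $\bar B$ is separable over \emph{any} intermediate subalgebra $\bar A$, since the image of a $\KK$-separability idempotent under $\bar B\otimes_{\KK}\bar B\to \bar B\otimes_{\bar A}\bar B$ is a separability idempotent for the extension; no case-by-case check against the classification, and no hypothesis on $\bar A$, is required.

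The genuine gap is the lifting step, which is the entire content of the proposition. First, $B\otimes_A B$ is not a ring, only a $B$-bimodule, so ``nilpotent kernel'' plus ``idempotent-style obstruction argument'' has no literal meaning there. What you must produce is an element of the centralizer $(B\otimes_A B)^B=\{z \mid bz=zb \text{ for all } b\in B\}$ mapping to $1$ under multiplication; and although $B\otimes_A B\to \bar B\otimes_{\bar A}\bar B$ is surjective, it need not remain surjective after passing to centralizers --- taking $B$-centralizers is only left exact, and the obstruction to lifting a centralizing element is a (relative) Hochschild degree-one class whose vanishing is essentially the separability you are trying to prove. So ``the commutation defect can be killed inductively'' is unsupported and arguably circular. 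Your ``more economical'' functorial phrasing fails for a parallel reason: the separable composite $\Res^{\bar B}_{\bar A}$ followed by restriction along $A\to\bar A$ equals $\Res^{\bar B}_{B}$ followed by $\Res^B_A$, and separability of a composite only passes to the functor applied \emph{first}, i.e.\ to the restriction along the surjection $B\to\bar B$, not to $\Res^B_A$; separability is simply not ``detected modulo the radical.'' The paper avoids any lifting: it invokes Wedderburn--Malcev (this is exactly where separability of $B/J(B)$ over $\KK$ enters) to choose a subalgebra $A_0$ with $B=A_0\oplus J(B)$, takes a separability idempotent $E=\sum_i e_i\otimes f_i$ of $A_0$ over $\KK$, and checks that its image $E'$ in $B\otimes_A B$ is already a separability idempotent on the nose: commutation with $b\in A_0$ is inherited from $E$, and commutation with $b\in J(B)$ holds because $J(B)\subseteq A$ (semisimple type) lets one slide $b$ across $\otimes_A$, giving $bE'=\sum_i 1\otimes_A be_if_i=1\otimes_A b=b\otimes_A 1=E'b$. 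This mechanism --- a genuine semisimple complement inside $B$ together with the sliding of radical elements across $\otimes_A$ --- is precisely what your proposal is missing, and nothing in it supplies a substitute.
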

\begin{proof}
Let $A_0$ be a subalgebra of $B$ with $B=A_0\oplus J(B)$. Since $A$ is of semisimple type, $A\supset J(B)$. Also, by hypothesis $A_0\cong B/J$ is separable, and let $E=\sum\limits_i e_i\otimes f_i\in A_0\otimes_\KK A_0$ be a separability idempotent of $A_0$. Let $E'=\sum\limits_i e_i\otimes_A f_i\in B\otimes_A B$ be the image of $E$ through the canonical map $\phi:A_0\otimes_\KK A_0\rightarrow B\otimes_\KK B\rightarrow B\otimes_A B$. We show that $E'$ is a separability idempotent for $B$. First, obviously $\sum\limits_i e_if_i=1\in A_0\subseteq B$. We need to show that $bE'=E'b$ for $b\in B$. Since $B=A_0\oplus J$, it is enough to show this for $b\in A_0$ and for $b\in J$. First, if $b\in A_0$, then 
$$bE'=\sum\limits_i (be_i)\otimes_A f_i=\phi(\sum\limits_i be_i\otimes f_i)=\phi(\sum\limits_i e_i\otimes f_ib)=\sum\limits_i e_i\otimes f_ib=E'b.$$ 
Also, if $b\in J$, then $be_i\in J\subset A$ and $f_ib\in J\subset A$, and using also that $\sum\limits_i e_if_i=1$, we obtain
\begin{eqnarray*}
bE' & = & b(\sum\limits_i e_i\otimes_A f_i) = \sum\limits_i \underbrace{be_i}_{\in  A}\otimes_A f_i = \sum\limits_i 1\otimes_A be_if_i = 1\otimes_A b(\sum\limits_i e_if_i) \\
& = & 1\otimes_A b = b\otimes_A 1 = \sum\limits_i e_if_i b\otimes_A 1 = \sum\limits_i e_i\otimes_A f_ib \,\,\,\,({\rm since\,}f_ib\in A)   \\
& = & E'b
\end{eqnarray*}
which ends the proof.
\end{proof}

\section{Applications and Examples}    \label{s.ae} 

\noindent We now present a series of examples and applications to illustrate the results from the previous sections. We also determine the maximal subalgebras of several important classes of algebras, such as path algebras of quivers and incidence algebras of posets. 

\subsection{Pointed Algebras}
We begin by stating a particular case of the above results, for algebras which are basic Schurian, that is, each simple module is 1-dimensional (such algebras are also called pointed). 

\begin{theorem}\label{t.basic}
Let $B$ be a basic Schurian (i.e. pointed) algebra. Let $e_1,\dots,e_n$ be a complete system of primitive orthogonal idempotents of $B$. Then any maximal subalgebra of $B$ is conjugate to one of the following two types (as before, $J=J(B)$ denotes the Jacobson radical of $B$):\\
(a) ${\rm Span}_\KK(e_i+e_j)+\sum\limits_{k\neq i,j}\KK e_k \oplus J$. \\
(b) $\sum\limits_{k}\KK e_k \oplus H$, where $J^2\subseteq H\subseteq J$ is such that $H/J^2$ is a maximal $B_0$-sub-bimodule of $J/J^2$, where $B_0=\sum\limits_i\KK e_i$. 
\end{theorem}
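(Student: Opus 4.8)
The plan is to specialize the general classification of Theorem~\ref{t.gen} (and its semisimple-type refinement in Corollary~\ref{c.ss}) to the pointed case. Since $B$ is basic Schurian, every simple module is $1$-dimensional, so $\End(S)=\KK$ for every simple $S$; thus $B$ is Schur in the sense of Remark~\ref{r.practical}, and $B/J(B)$ is separable (indeed $B/J(B)\cong \prod_i \KK$, a product of copies of $\KK$). This separability is precisely what lets us invoke Wedderburn--Malcev to pick a subalgebra $A_0$ with $B=A_0\oplus J$, and here $A_0=B_0=\sum_i \KK e_i\cong \prod_i \KK$ is the span of a complete system of primitive orthogonal idempotents. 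So the $n_i$ in Theorem~\ref{t.t} all equal $1$, and Theorem~\ref{t.gen} tells us every maximal subalgebra is conjugate to one of semisimple type or of split type, which I will treat in turn.

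First I would dispose of the split type. By Theorem~\ref{t.gen}(i), a maximal subalgebra of split type is conjugate to $A_0\oplus H = B_0\oplus H$ where $H\subset J$ is a $B$-sub-bimodule of $J$ maximal inside $J$. By Remark~\ref{r.practical}, such $H$ necessarily contains $J^2$, and $H/J^2$ is a maximal $B_0$-sub-bimodule of $J/J^2$ (equivalently a maximal $B/J(B)$-sub-bimodule, which is the same since $J$ acts trivially on $J/J^2$). This is exactly case (b), so that direction is immediate from the cited results.

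Next I would handle the semisimple type using Corollary~\ref{c.ss} and Remark~\ref{r.ss}. Here each matrix factor of $A_0=\prod_i M_{n_i}(\KK)$ has $n_i=1$, so $M_{n_i}(\KK)=\KK$ is $1$-dimensional and has no proper nontrivial subalgebras; consequently the type S1, S2, S3 subalgebras of a single factor cannot occur (a $1\times 1$ block admits no proper block-triangular, centralizer, or scalar-extension maximal subalgebra). The only remaining possibility in Remark~\ref{r.ss} is the diagonal type $\Delta^2(n_i,\KK)$ with $n_i=n_j$; since all $n_i=1$, this forces the diagonal embedding $\KK\hookrightarrow \KK\times\KK$, i.e. $\Delta^2(1,\KK)={\rm Span}_\KK(e_i+e_j)$ sitting in components $i,j$. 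Hence a maximal subalgebra of semisimple type is conjugate to $\big({\rm Span}_\KK(e_i+e_j)+\sum_{k\neq i,j}\KK e_k\big)\oplus J$, which is case (a).

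The only thing requiring care is verifying that the two cases of Theorem~\ref{t.t} involving genuine matrix blocks (its cases (a) and (b)) collapse away in the pointed setting, and that the remaining cases assemble correctly; this is essentially bookkeeping rather than a genuine obstacle, since all the hard work is already contained in Theorem~\ref{t.gen}, Corollary~\ref{c.ss}, and Remark~\ref{r.ss}. The one step I would state explicitly is the identification of the diagonal subalgebra $\Delta^2(1,\KK)$ with ${\rm Span}_\KK(e_i+e_j)$: the image of the diagonal map $\KK\to\KK\times\KK$ is spanned by the element mapping to $(1,1)$, which in $B_0$ is $e_i+e_j$, so the resulting maximal subalgebra replaces the two idempotents $e_i,e_j$ by their sum while retaining all other $e_k$. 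I expect no genuine obstacle here; the proof is a direct translation of the separable-case results into the language of idempotents and quivers, and so it suffices to record that $\Phi$ may be taken inner (as in Theorem~\ref{t.3.1} and Remark~\ref{r.ss}), giving conjugacy rather than mere isomorphism.
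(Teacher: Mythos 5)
Your proposal is correct and takes essentially the same route as the paper: Theorem \ref{t.basic} is presented there as a direct specialization of Theorem \ref{t.gen}, Corollary \ref{c.ss}, Remark \ref{r.ss}, and Remark \ref{r.practical} to the pointed case, which is exactly the reduction you carry out. Your explicit checks that types S1--S3 cannot occur for $1\times 1$ blocks and that $\Delta^2(1,\KK)={\rm Span}_\KK(e_i+e_j)$ merely fill in the bookkeeping the paper leaves implicit.
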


We now apply this theorem to cases of particular interest, such as path algebras of quivers and incidence algebras. These will follow directly from the previous theorem. 

First we consider quiver algebras; in fact, we reformulate the previous theorem in the language of quivers with relations, when one needs to work with an algebra which is given by a presentation. 

Let $Q$ be a finite quiver. For each pair of vertices $a,b\in Q_0$, we write $Q_1(a,b)$ to denote the set of arrows from $a$ to $b$ and $V(a,b)=\KK Q_1(a,b)$ to denote the span of all arrows $x:a\rightarrow b$ in $Q_1$ (``generalized arrows" from $a$ to $b$). It is well known that every finite dimensional basic Schurian (i.e. pointed) algebra $B$ can be presented as $B=\KK Q/I$ for a finite quiver $Q$ and $I$ an admissible ideal of $\KK Q$, that is $J^n\subseteq I\subseteq J^2$ for some $n$ \cite{Ass5} (in particular, every basic algebra over an algebraically closed field $\KK$). In this case, since the map $\KK Q_0\oplus\KK Q_1\rightarrow \KK Q/I$ is injective, the spaces $V(a,b)$ can be regarded as subspaces of $B=\KK Q/I$. For each pair of vertices $a,b\in Q_0$ and each subspace $V\subseteq V(a,b)$ of codimension $1$ (in particular $V(a,b) \neq 0$), we associate the subalgebra $A(a,b,V)$ of $B= \KK Q/I$ which is defined by $A(a,b,V)=\bigoplus\limits_{a\in Q_0}\KK a \oplus V\oplus \bigoplus\limits_{c,d\in Q_0}V(c,d) \oplus J^2$. This is the subalgebra of $\KK Q/I$ spanned by the images of all paths of length $\geq 2$, all vertices, all arrows $c\rightarrow d$ for $(c,d)\neq (a,b)$ and $V$. Note that $H=V\oplus \bigoplus\limits_{c,d\in Q_0}V(c,d) \oplus J^2$ is a maximal $\KK Q_0$-sub-bimodule of $J$ containing $J^2$. Furthermore, for $a,b\in Q_0$, let $A'(a,b)$ denote the subalgebra ${\rm Span}_\KK\left(\{a+b\}\cup Q_0\setminus \{a,b\}\right)\oplus J$. Then Theorem \ref{t.basic} can be reformulated as follows.

\begin{proposition}
The maximal subalgebras of $B= \KK Q/I$, for a finite quiver $Q$ and admissible ideal $I$, are precisely the algebras conjugate to either $A(a,b,V)$ or $A'(a,b)$. 
\end{proposition}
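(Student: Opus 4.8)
The plan is to derive this Proposition directly from Theorem \ref{t.basic} by carefully translating the bimodule-theoretic description of maximal subalgebras of a pointed algebra into the combinatorial language of the quiver presentation $B = \KK Q/I$. Since Theorem \ref{t.basic} already asserts that every maximal subalgebra is conjugate to one of exactly two types, the work consists entirely in identifying type (a) with the algebras $A'(a,b)$ and type (b) with the algebras $A(a,b,V)$, and verifying the correspondence is exact (i.e. every such $A(a,b,V)$ and $A'(a,b)$ is maximal, and every maximal subalgebra is of this form).

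First I would fix the setup: with $B_0 = \sum_i \KK e_i = \KK Q_0$ the semisimple part, the radical $J = J(B)$ is spanned by the images of all paths of length $\geq 1$, and $J/J^2$ has a basis given by the (generalized) arrows, so $J/J^2 \cong \bigoplus_{a,b \in Q_0} V(a,b)$ as a $B_0$-bimodule. The key observation is that $V(a,b)$ is precisely the $(a,b)$-isotypic component of $J/J^2$ under the $B_0 = \KK Q_0$-bimodule action: since $B_0$ is a product of copies of $\KK$ indexed by vertices, a $B_0$-sub-bimodule of $J/J^2$ is exactly a direct sum of subspaces $W_{a,b} \subseteq V(a,b)$, one for each ordered pair. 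Consequently, a \emph{maximal} $B_0$-sub-bimodule $H/J^2$ must agree with $V(c,d)$ on all but one pair $(a,b)$ and be a codimension-$1$ subspace $V \subsetneq V(a,b)$ there (forcing $V(a,b) \neq 0$). This is exactly the data defining $A(a,b,V)$, establishing that type (b) subalgebras are precisely the $A(a,b,V)$. For type (a), I would simply note that $\operatorname{Span}_\KK(e_i + e_j) + \sum_{k \neq i,j} \KK e_k$ is by definition $\operatorname{Span}_\KK(\{a+b\} \cup Q_0 \setminus \{a,b\})$ with $a = $ (vertex $i$), $b = $ (vertex $j$), so type (a) subalgebras are exactly the $A'(a,b)$.

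The main obstacle I anticipate is bookkeeping rather than conceptual: I must verify that the $A_0$-bimodule structure in Theorem \ref{t.basic}(b) coincides with the $\KK Q_0$-bimodule structure on $J/J^2$ used to define $A(a,b,V)$, and that the isotypic decomposition genuinely forces a maximal sub-bimodule to have the codimension-$1$-at-a-single-pair form. This requires the elementary but essential fact that for a semisimple commutative algebra $\KK Q_0 = \prod_{v} \KK e_v$, every bimodule is a direct sum of its isotypic pieces $e_a M e_b$, and a sub-bimodule is maximal iff it is a hyperplane in a single nonzero isotypic component and full elsewhere. I would also need to confirm the admissibility hypothesis $J^n \subseteq I \subseteq J^2$ guarantees that $J/J^2$ is spanned by arrows (so the spaces $V(a,b)$ are well-defined subspaces of $B$ via injectivity of $\KK Q_0 \oplus \KK Q_1 \to \KK Q/I$), which the excerpt already records.

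Finally I would assemble the two identifications: every maximal subalgebra of $B$ is, by Theorem \ref{t.basic}, conjugate to one of type (a) or type (b); type (a) are the $A'(a,b)$ and type (b) are the $A(a,b,V)$; and conversely each $A'(a,b)$ and each $A(a,b,V)$ arises from the corresponding construction in Theorem \ref{t.basic} and is therefore maximal. Because the proposition states the maximal subalgebras are ``precisely the algebras conjugate to'' these, I would make sure to phrase the conclusion up to conjugacy, matching the statement of Theorem \ref{t.basic}, and note that no further argument is needed since the correspondence between the two descriptions is a direct unwinding of definitions.
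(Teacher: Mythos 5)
Your proposal is correct and follows essentially the same route as the paper: the paper also obtains this proposition as a direct reformulation of Theorem \ref{t.basic}, noting that $H=V\oplus \bigoplus_{(c,d)\neq(a,b)}V(c,d)\oplus J^2$ is exactly a maximal $\KK Q_0$-sub-bimodule of $J$ containing $J^2$, so that type (b) subalgebras are the $A(a,b,V)$ and type (a) subalgebras are the $A'(a,b)$. Your explicit use of the isotypic decomposition $e_a(J/J^2)e_b = V(a,b)$ to characterize maximal sub-bimodules is precisely the elementary fact the paper leaves implicit, so no gap remains.
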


We note that the extension $A'(a,b)\subseteq \KK Q/I$ is always a separable extension, with $E =$ 
$\sum_{v \in Q_0}{v\otimes_{A'(a,b)}v}$ as a separability idempotent.

In the case of the incidence algebra of a poset $P$, this Theorem also directly produces the structure of all maximal subalgebras. Let $P$ be a finite poset. Recall that the incidence algebra $I(P)$ of $P$ has a basis consisting of pairs $[a,b]$ for $a\leq b$ (the intervals) and multiplication given by ``convolution" $[a,b]*[c,d]=\delta_{b,c}[a,d]$. The following proposition is now a direct consequence of the previous one, or of the Theorem \ref{t.basic}.

\begin{proposition}
Let $A$ be a maximal subalgebra of $I(P)$, where $P$ is a finite poset. Then $A$ is conjugate either to an algebra of the form $I_s(P,a,b):={\rm Span}\{[a,a]+[b,b]\}\oplus\bigoplus\limits_{[c,d]\neq [a,a], [b,b]; \, c\leq d} \KK [c,d]$ with $a\neq b$, $a,b\in P$ (maximal subalgebras of semisimple type) or to one of the form $I_t(P,a,b):=\bigoplus\limits_{[c,d]\neq [a,b]}\KK [c,d]$ where $a,b\in P$, and $a<b$ is a covering relation, that is $a<b$ is minimal (if $a\leq x\leq b$ then either $a=x$ or $b=x$). 
\end{proposition}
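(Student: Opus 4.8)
The plan is to reduce this statement directly to the characterization of maximal subalgebras of pointed algebras given in Theorem~\ref{t.basic}, by exhibiting the incidence algebra $I(P)$ as a basic Schurian quotient of a path algebra. First I would recall that $I(P)$ is itself a basic Schurian (pointed) algebra: its complete set of primitive orthogonal idempotents is $\{[a,a] \mid a\in P\}$, with $[a,a]+[b,b]+\cdots = 1$, and each simple module is one-dimensional. The semisimple part is $B_0 = \bigoplus_{a\in P}\KK[a,a]$, and the Jacobson radical is $J = \bigoplus_{a<b}\KK[a,b]$, spanned by the strict intervals. With this identification, Theorem~\ref{t.basic} applies verbatim, and the two types (a) and (b) of maximal subalgebras translate into the two families $I_s(P,a,b)$ and $I_t(P,a,b)$; the task is to show the translation is exact.

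\emph{Type (a) / semisimple type.}
Specializing Theorem~\ref{t.basic}(a) with $e_a = [a,a]$, a maximal subalgebra of semisimple type is conjugate to $\operatorname{Span}_\KK([a,a]+[b,b]) + \sum_{c\neq a,b}\KK[c,c] \oplus J$ for some $a\neq b$. Expanding $J = \bigoplus_{c<d}\KK[c,d]$, this is precisely $I_s(P,a,b)$, so this case matches immediately with no extra work.

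\emph{Type (b) / split type.}
Here Theorem~\ref{t.basic}(b) gives subalgebras of the form $B_0\oplus H$ where $J^2\subseteq H\subseteq J$ and $H/J^2$ is a maximal $B_0$-sub-bimodule of $J/J^2$. The main content of the proof is to identify $J/J^2$ and its maximal sub-bimodules. I would compute that $J^2 = \bigoplus\{\KK[a,b] : a<b \text{ is not a covering relation}\}$, since $[a,x]*[x,b] = [a,b]$ shows that an interval lies in $J^2$ exactly when it factors through some intermediate element $a<x<b$, i.e.\ when $a<b$ is \emph{not} a covering relation. Consequently $J/J^2$ has $\KK$-basis the images of the covering relations $[a,b]$, and because each $[a,b]$ satisfies $[a,a]*[a,b]*[b,b]=[a,b]$, the $B_0$-bimodule $J/J^2$ decomposes as a direct sum of one-dimensional bimodules indexed by covering relations. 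A maximal $B_0$-sub-bimodule therefore omits exactly one covering relation $[a,b]$, whence $H = \bigoplus_{[c,d]\neq[a,b]}\KK[c,d]$ (intersected appropriately with $J$) and $B_0\oplus H = I_t(P,a,b)$, recovering the stated family with $a<b$ a covering relation.

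\emph{Main obstacle.}
I expect the only genuine step is the explicit computation $J^2 = \operatorname{Span}\{[a,b] : a<b \text{ not covering}\}$, together with the verification that $J/J^2$ splits as a direct sum of simple one-dimensional $B_0$-bimodules indexed exactly by the covering relations; everything else is a direct translation of notation. I would handle this by the convolution formula and the observation that $[a,b]\in J^2$ iff there exists $x$ with $a<x<b$. One remaining bookkeeping point is to confirm that distinct covering pairs $(a,b)$ can give non-conjugate subalgebras, so that the classification is stated up to conjugacy exactly as in Theorem~\ref{t.basic}; this is automatic since the reformulation is an application of that theorem rather than a new classification.
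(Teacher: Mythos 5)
Your proposal is correct and takes essentially the same route as the paper, which proves this proposition simply by declaring it ``a direct consequence'' of Theorem~\ref{t.basic} (equivalently, of the preceding quiver proposition); your computation of $J$, of $J^2$ as the span of the non-covering intervals, and of the maximal $B_0$-sub-bimodules of $J/J^2$ supplies exactly the details the paper leaves implicit. The only step worth making explicit is that distinct covering pairs $(a,b)$ yield pairwise non-isomorphic one-dimensional $B_0$-bimodules, so every sub-bimodule of $J/J^2$ is spanned by a subset of the covering relations; this multiplicity-freeness is precisely why, in contrast with the general quiver case where one must choose a codimension-one subspace $V\subseteq V(a,b)$, a maximal subalgebra of split type here just omits a single interval $[a,b]$.
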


We note that incidence algebras of quasi-ordered sets are also considered in literature (and are sometimes called Structural Matrix Algebras; a quasi-ordered set is a set with a partial order $\leq$ which is reflexive and transitive, but not necessarily symmetric). Up to Morita equivalence, any incidence algebra of a quasi-ordered $P$ set is equivalent the incidence algebra of the poset $P/\sim$, where $\sim$ is the equivalence relation generated by symmetrization ($x\sim y$ if $x\leq y$ and $y\leq x)$. Such incidence algebras will appear implicitly in what follows.

\subsection*{Dynkin quivers and other examples}

We will consider a slight relaxation of the notion of a maximal subalgebra $A$ of $B$, which will significantly expand the class of examples and applications. Of course, generically when $A$ and $B$ are not fixed, one can can equivalently talk about minimal extensions of algebras $A\subseteq B$. Such extensions can be regarded via the associated restriction functor ${\rm Res}^B_A:B{\rm-Mod}\rightarrow A{\rm-Mod}$ between the corresponding module categories; hence, it will be useful from a representation theoretic point of view to consider such extensions ``up to Morita equivalence". We thus introduce the following definition.

\begin{definition}
Let $A,B$ be finite dimensional $\KK$-algebras, and $F:B{\rm-Mod}\rightarrow A{\rm-Mod}$ a functor. We say that $F$ is a (minimal) restriction if there is a (minimal) extension of algebras $A'\subset B'$ with $A',B'$ Morita equivalent to $A,B$ respectively, and the diagram
$$\xymatrix{ B{\rm-Mod} \ar[r]^F\ar[d]_{\approx} & A{\rm-Mod}\ar[d]^{\approx} \\ B'{\rm-Mod} \ar[r]_{{\rm Res}^{B'}_{A'}} & A'{\rm-Mod}}$$
is commutative, with vertical arrows being equivalences. If, given the algebras $A,B$, such a (minimal) restriction $F$ exists, we say $A,B$ is a (minimally) embeddable pair. We write $A\ll B$ when $(A,B)$ is an embeddable pair.
\end{definition}

\subsection{Embeddings of quivers of ``separable type"} 
First note that if $Q$ is a Dynkin quiver, (or more generally an acyclic quiver) then one can introduce a partial order on the vertex set $Q_0$ given by paths: $a\leq b$ if there is a path from $a$ to $b$. Without loss of generality, we may assume $Q_0=\{1,2,\dots,n\}$. In the Dynkin case, or more generally, if the underlying graph of $Q$ is a tree, the path algebra is isomorphic to the incidence algebra associated to $(Q_0,\leq)$. This isomorphism $\varphi$ takes a path $p$ between vertices $i$ and $j$ to the matrix element $e_{ij}\in M_n(\KK)$. Here, we regard $\varphi$ also as a morphism $\varphi: \KK Q\rightarrow M_n(\KK)$, and $\varphi(\KK Q)$ is the structural matrix algebra (incidence algebra); of course, as a representation, $\varphi$ yields an indecomposable representation of $Q$; when $Q$ is of type ${\mathbb A}_n$, this is the indecomposable of largest dimension. 

Let $Q$ be Dynkin, or more generally, a tree. We use the above $\varphi$ to create a minimal embedding. Let $A=\varphi(\KK Q)$ and consider two adjacent vertices $i,j$, so that there is an arrow $i\rightarrow j$ in $Q$, and let $\leq$ be the order as above. We consider a new (quasi-)ordering $\preceq$ on $Q_0$ generated (via transitivity) by ``introducing" the new relation $j\preceq i$. Hence, the new relation $\preceq$ is defined by $k\preceq l$ if either $k\leq l$ or $k\leq j$ and $i\leq l$. The incidence algebra $B$ associated to $\preceq$, as a subalgebra of $M_n(\KK)$ is the algebra with basis $\{e_{kl}|k\preceq l\}$, which is exactly the subalgebra generated by $A$ and $e_{ji}$. Let $I$ be the ideal of $B$ generated by $e_{ji}$; it has a basis the elements $e_{kl}$ for which $k\leq j$ and $i\leq l$. Obviously, $B=A+ I$. 
In certain cases, this embedding becomes a minimal embedding.

\begin{proposition}
Let $Q$ be a quiver whose underlying graph is a tree, and $a:i\rightarrow j$ an arrow in $Q$ such that the following condition (*) is satisfied:
\begin{center}
(*) $i$ emits no other arrows except $a$, and $j$ receives no other arrows but $a$. 
\end{center}
Then the algebra extension $A\subseteq B$ is a minimal extension, with $A$ a maximal subalgebra of $B$ of semisimple type. 
\end{proposition}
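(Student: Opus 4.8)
The plan is to show that condition (*) forces $A$ to have codimension one in $B$, which yields maximality at once, and then to check that the single ``extra'' basis vector $e_{ji}$ sits in a matrix block of $B/J(B)$ rather than in the radical, which gives the semisimple type.

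First I would unwind (*) combinatorially, using that the underlying graph of $Q$ is a tree, so any two comparable vertices are joined by a unique directed path. Because $j$ receives only the arrow $a$, every path ending at $j$ must pass through $i$ immediately before $j$; hence $\{k\in Q_0 : k\le j\} = \{j\}\cup\{k : k\le i\}$. Dually, because $i$ emits only $a$, we get $\{l\in Q_0 : i\le l\} = \{i\}\cup\{l : j\le l\}$. The ideal $I$ has basis $\{e_{kl} : k\le j,\ i\le l\}$, so I would run through the four combinations given by $k=j$ or $k\le i$, and $l=i$ or $l\ge j$: in every case except $(k,l)=(j,i)$ the inequalities compose (using $i\le j$) to yield $k\le l$, hence $e_{kl}\in A$. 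Since $B=A+I$, this shows $B=A\oplus\KK e_{ji}$ as $\KK$-vector spaces, i.e. $A$ has codimension one in $B$. As $e_{ji}\notin A$ (there is no path $j\to i$, because $Q$ is acyclic), $A$ is a proper codimension-one subalgebra, so any intermediate subalgebra equals $A$ or $B$; thus $A$ is maximal, which is exactly the assertion that $A\subseteq B$ is a minimal extension.

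It then remains to see that $A$ is of semisimple type, i.e. by Lemma \ref{l.1} that $J(B)\subseteq A$. Here $B$ is the incidence algebra of the quasi-order $\preceq$, whose only non-singleton equivalence class is $\{i,j\}$: indeed $i\preceq j$ (from the arrow) and $j\preceq i$ (the introduced relation), while uniqueness of paths in the tree rules out any further symmetric pair. Consequently $B/J(B)\cong M_2(\KK)\times\prod_{v\neq i,j}\KK$, with $\{e_{ii},e_{ij},e_{ji},e_{jj}\}$ spanning the $M_2(\KK)$ block, and $J(B)$ is spanned by those $e_{kl}$ whose classes satisfy $\mathrm{class}(k)\prec\mathrm{class}(l)$ strictly. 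In particular $e_{ji}$ lies in the $M_2(\KK)$ block and not in $J(B)$; since $e_{ji}$ is the unique basis vector of $B$ lying outside $A$, every radical basis vector lies in $A$, giving $J(B)\subseteq A$ and hence the semisimple type.

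The only genuine obstacle is the bookkeeping in the second paragraph: one must be confident that (*) together with acyclicity really eliminates every basis element of $I$ except $e_{ji}$, and that no extra symmetric pair slips into $\preceq$. Both reduce to the uniqueness of paths in a tree, so once that is invoked the argument is essentially combinatorial, and the maximality and semisimple-type conclusions follow formally from the codimension-one and radical computations.
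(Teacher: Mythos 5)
Your proof is correct, and its first half is the paper's own argument with the combinatorics filled in: the paper's proof of maximality consists of the single assertion that (*) gives $B=A+\KK e_{ji}$, which is exactly your four-case computation of the basis of $I$. The genuine difference is in how semisimple type is established. The paper exhibits the span $L$ of those $e_{kl}$ with $k\le l$ and $\{k,l\}\not\subset\{i,j\}$ as an ideal of $B$ complementary to $M=\Sp_{\KK}\{e_{ii},e_{ij},e_{ji},e_{jj}\}$, so that $M_2(\KK)$ is a block of $B$ but not of $A$; since a maximal subalgebra of split type has the same semisimple quotient as $B$ by Lemma \ref{l.1}(ii), the dichotomy of Lemma \ref{l.1} then forces semisimple type, an eliminative argument. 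You instead verify case (i) of Lemma \ref{l.1} affirmatively, proving $J(B)\subseteq A$ outright: you identify $B$ as the incidence algebra of the quasi-order $\preceq$, check (via uniqueness of paths in the tree) that $\{i,j\}$ is its only non-singleton equivalence class, and invoke the standard description of the radical of such a structural matrix algebra, namely the span of the $e_{kl}$ whose classes are strictly related, so that $e_{ji}$, the one basis vector of $B$ outside $A$, is not in $J(B)$. Both routes turn on the same structural fact, that $\{i,j\}$ spans an $M_2(\KK)$ block of $B/J(B)$; indeed the paper records precisely your version of it right after the proposition, when it observes $B/J(B)\cong\KK^{n-1}\times M_2(\KK)$ in discussing the Morita equivalence with $\KK Q'$. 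What your route buys is directness (you check the defining condition of semisimple type rather than ruling out the split alternative); what it costs is the reliance on the radical description for incidence algebras of quasi-ordered sets, which you should justify or cite in one line: that span is visibly a nilpotent ideal whose quotient is the semisimple algebra $M_2(\KK)\times\KK^{n-2}$, hence it equals $J(B)$.
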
 
\begin{proof}
The condition in the hypothesis shows that $B=A+\KK e_{ji}$, so $A$ is maximal. The subspace $M$ of $B$ spanned by $\{e_{ii},e_{jj},e_{ij},e_{ji}\}$ has as a complement the space $L$ spanned by the $e_{kl}$'s with $k\leq l$ and $\{k,l\}\not\subset \{i,j\}$; this $L$ is an ideal. Hence, $M_2(\KK)$ is necessarily a block of $B$, but it is not a block of $A$. Thus, by the results of Section \ref{s.gen} the maximal subalgebra $A$ of $B$ is of semisimple type.
\end{proof}


We note now that in general, even in the absence of condition (*), the algebra $B$ above is Morita equivalent to the path algebra of $Q'$, where $Q'$ is the quiver obtained from $Q$ by {\it collapsing} the edge $i\rightarrow j$; that is, $Q'$ is obtained from $Q$ by removing $i\rightarrow j$ and then identifying vertex $i$ with vertex $j$. Note that in this case, new paths arise in $Q'$ by concatenating paths ending at $j$ and paths starting at $i$; the paths in $Q$ which contain the arrow $i\rightarrow j$ are in one-to-one correspondence with a subset of the paths in $Q'$ which contain the vertex $i=j$. One way to observe this Morita equivalence is by noting that $B/J(B)\cong \KK^{n-1}\times M_2(\KK)$, where the $M_2(\KK)$ block corresponds to the idempotents $e_{ii}, e_{jj}$. Hence, if $e:=\sum\limits_{t\neq j}e_{tt}$, then $eBe$ is Morita equivalent to $B$ (it is the basic algebra associated to $B$); moreover, in $eBe$, a basis is given by $ee_{kl}e$, for $k,l\in \{1,...,n\}\setminus \{j\}$ and $k\preceq l$. These correspond exactly to paths in $Q'$ from some $k$ to some $l$. This last fact shows that there is an isomorphism $eBe\cong \KK Q'$. This last assertion shows that we have the following.

\begin{proposition}
With the notations above, the pair $\KK Q,\,\KK Q'$ is an embeddable pair. If condition (*) is satisfied, then this is a minimally embeddable pair. 
\end{proposition}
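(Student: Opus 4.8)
The plan is to produce the extension $A' \subset B'$ demanded by the definition of an embeddable pair directly from the construction given just above the statement, taking $A' = A = \varphi(\KK Q)$ and $B' = B$, the incidence algebra of the quasi-order $\preceq$. Two Morita identifications then have to be recorded: that $A'$ is Morita equivalent to $\KK Q$, and that $B'$ is Morita equivalent to $\KK Q'$. Everything else is a transcription of the definition together with results already proved in the excerpt.

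The first identification is immediate: since the underlying graph of $Q$ is a tree, $\varphi$ is an isomorphism of $\KK Q$ onto the incidence algebra of $(Q_0,\leq)$, as noted above, so $A' \cong \KK Q$ and in particular $A'$ is Morita equivalent to $\KK Q$. For the second, I would invoke the discussion preceding the statement. With $e = \sum_{t\neq j} e_{tt}$, the decomposition $B/J(B) \cong \KK^{n-1} \times M_2(\KK)$, in which the $M_2(\KK)$-block carries the idempotents $e_{ii}$ and $e_{jj}$, shows that $e$ is a full idempotent of $B$: omitting $e_{jj}$ still retains $e_{ii}$, which lies in the same block (note $Be_{ii}\cong Be_{jj}$ since $e_{ij},e_{ji}\in B$), so every simple $B$-module survives in $Be$. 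Hence $eBe$ is the basic algebra of $B$, Morita equivalent to $B$, and the identification $eBe \cong \KK Q'$ obtained by matching the basis elements $e\,e_{kl}\,e$ with paths of $Q'$ gives $B' \approx \KK Q'$.

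Given these, $A' = A \subseteq B = B'$ is an algebra extension, and restriction $\Res^{B'}_{A'}\colon B'{\rm-Mod}\to A'{\rm-Mod}$ composed with the two Morita equivalences $\KK Q'{\rm-Mod}\approx B'{\rm-Mod}$ and $A'{\rm-Mod}\approx \KK Q{\rm-Mod}$ produces a functor $F\colon \KK Q'{\rm-Mod}\to \KK Q{\rm-Mod}$ fitting the commutative square of the definition; this establishes that $(\KK Q,\KK Q')$ is an embeddable pair. For the sharper claim under condition (*), I would simply cite the preceding Proposition, which asserts that under (*) the extension $A\subseteq B$ is a minimal extension; thus the witnessing extension $A'\subset B'$ is minimal, and by definition $(\KK Q,\KK Q')$ is a minimally embeddable pair.

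The only genuinely non-formal point is the fullness of $e$ — that no simple module is lost on passing to $eBe$ — and this is exactly where the $M_2(\KK)$-block structure of $B/J(B)$ (equivalently, the fact that introducing $j\preceq i$ makes $i$ and $j$ equivalent in the quasi-order) is used. I do not expect any other obstacle, since the heavy lifting has already been done in the paragraph and Proposition immediately preceding the statement.
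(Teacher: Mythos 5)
Your proposal is correct and follows essentially the same route as the paper, whose own justification is precisely the preceding discussion: the identification $\varphi(\KK Q)\cong\KK Q$, the Morita equivalence $B\approx eBe\cong \KK Q'$ via the corner idempotent $e=\sum_{t\neq j}e_{tt}$, and the citation of the preceding proposition for minimality under condition (*). The only addition is your explicit check that $e$ is full (using $Be_{ii}\cong Be_{jj}$ via $e_{ij},e_{ji}\in B$), a point the paper leaves implicit in asserting that $eBe$ is the basic algebra of $B$.
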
 

More generally, if $P$ is a finite poset and $I(P)$ its incidence algebra, we say that an interval $[a,b]$ is {\bf{clamped}} if $x \le b$ implies that $x$ is comparable to $a$, and $a \le y$ implies that $y$ is comparable to $b$. We say that $b$ {\bf{covers}} $a$ if $[a,b] = \{ a,b\}$. If $[a,b]$ is a clamped interval with $b$ covering $a$, then adding the relation $b \le a$ yields a quasi-ordered set $Q$, and a minimal extension of algebras $I(P) \subset I(Q)$. $I(Q)$ is Morita equivalent to the incidence algebra of the poset obtained by collapsing the arrow $a \rightarrow b$ in the Hasse diagram of $P$ to a point. If $Q$ is a quiver whose underlying graph is a tree, then it is naturally an incidence algebra, and the condition (*) above is equivalent to the interval $[i,j]$ being clamped and $j$ covering $i$. In fact, using the remarks above and in Section 1, one can see that the resulting functors $\KK Q'{\rm-Mod}\longrightarrow \KK Q{\rm-Mod}$ can be re-interpreted as being exactly localization functors.

The previous considerations allow us to provide many natural examples of embeddable pairs, which produce separable extensions. For example,  ${\mathbb A}_{n+1}$ ``embeds" in ${\mathbb A}_n$ by ``collapsing" one edge; below, the dotted arrow in ${\mathbb A}_{n+1}$ gets collapsed. 

$$\xymatrix{
{\mathbb A}_{n+1}\ll {\mathbb A}_n & \bullet_1 \ar@{-}[r] &  \dots \ar@{-}[r] & \bullet_{k-1}\ar@{--}[r]\ar@{=>}[d] & \bullet_k \ar@{-}[r] & \dots\ar@{-}[r] & \bullet_{n+1} \\
& \bullet_1 \ar@{-}[r] &  \dots \ar@{-}[r] & (\bullet_{k-1}=\bullet_{k})\ar@{-}[r] & \bullet_{k+1} \ar@{-}[r] &  \dots\ar@{-}[r] & \bullet_{n+1} 
}$$
  
For instance, if $k =2$, this is just the embedding $B(1^{n+1}) \subset B(2^11^n)$, where we use the notation of Definition \ref{d.but} and the note below (here R = $\KK$.) Given the appropriate orientations so that the collapsed arrow satisfies condition (*) above, this becomes a minimal embedding. We give a few more examples and note that there are embeddable pairs $\DD_{n+1} \ll \Aff_n$, and $\DD_{n+1}\ll \DD_n$. This is perhaps also interesting as ``pictorially" one perhaps normally expects to embed lower order $\Aff_n$ and $\DD_n$'s into higher ones. We draw only the diagrams, with the arrow to be collapsed drawn as a dotted arrow (thus collapsing an arrow produces an embedding up to Morita equivalence). Again, with appropriate orientations satisfying condition (*) we obtain minimal embeddings (hence, examples of maximal subalgebras).

$$
\xymatrix{ & \bullet_1\ar@{--}[dr] & & & & & \\
\DD_{n+1} \ll \Aff_{n} & & \bullet_3\ar@{-}[r] &  \bullet_4\ar@{-}[r] & \dots \,\, \dots \,\, \dots\ar@{-}[r] & \bullet_{n}\ar@{-}[r] & \bullet_{n+1} \\
& \bullet_{2}\ar@{-}[ur] & & & & &}$$ 	
	
$$
\xymatrix{ & \bullet_1\ar@{-}[dr] & & & & & \\
\DD_{n+1} \ll \DD_{n} & & \bullet_3\ar@{-}[r] &  \bullet_4\ar@{-}[r] & \dots \,\, \dots \,\, \dots\ar@{-}[r] & \bullet_{n}\ar@{--}[r] & \bullet_{n+1} \\
& \bullet_{2}\ar@{-}[ur] & & & & &}$$ 	
	
Obviously, in a similar way one can produce many other embeddings, which become minimal embeddings given appropriate orientations on the quiver; these will always be separable. We list a few such possible embeddings which are obtained just as above and leave it to the reader to imagine/draw the appropriate diagrams: 
\begin{center}
$\EE_8 \ll \EE_7$, $\EE_7\ll \EE_6$, $\EE_6\ll \DD_5$, $\EE_8\ll \Aff_7$, $\EE_7\ll \Aff_6$, $\EE_6\ll \Aff_5$. 
\end{center}

\subsection{ Embeddings of quivers of ``split type"}
	
We give another series of examples of embeddings of quivers, which will often produce examples of split extensions. These are embeddings of path algebras that are obtained whenever a subquiver $\Gamma$ of a quiver $Q$ is considered. Let $Q$ be an acyclic quiver, $a\in Q_0$ and let $I$ be the span of all {\it non-trivial} paths in $Q$ which pass through $a$. This is an ideal of $\KK Q$. Now consider $A$ the subalgebra of $\KK Q$ generated by all paths which {\it do not} pass through $a$; this is spanned by all these paths (including the ones of length 0 which are vertices $b\in Q_0\setminus \{a\}$), together with the identity element $1$, and hence contains $a$. Then, we have the following straightforward observation.

\begin{lemma}
With the above notations, $\KK Q=A\oplus I$ is a split extension of $A$, with $I^2=0$.
\end{lemma}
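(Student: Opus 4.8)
The plan is to work entirely at the level of the path basis of $\KK Q$ and then read off the ring structure. I would fix the basis consisting of all paths, with the vertices $e_v$ as the length-$0$ paths, and split it into the set $P_0$ of paths that do not meet $a$ and the set $P_a$ of non-trivial paths that visit $a$. By definition $I=\operatorname{Span}_{\KK}P_a$. On the other side, $A$ is spanned by $P_0$ together with $1=\sum_{v\in Q_0}e_v$; since $e_b\in\operatorname{Span}_{\KK}P_0$ for each $b\neq a$, adjoining $1$ amounts to adjoining the single idempotent $e_a$, so $A$ has basis $P_0\cup\{e_a\}$. The two lists $P_0\cup\{e_a\}$ and $P_a$ are disjoint and together name every path exactly once, which gives $\KK Q=A\oplus I$ as $\KK$-vector spaces.

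Next I would check the multiplicative structure. A product of two paths in $P_0$ is either $0$ or their concatenation, whose vertex set is the union of the two vertex sets and hence again avoids $a$; combined with $e_a p=p e_a=0$ for $p\in P_0$ and $e_a^2=e_a$, this shows $A$ is closed under multiplication, i.e.\ a subalgebra. For $I$, if $p\in P_a$ and $r$ is any path, then a non-zero product $rp$ or $pr$ is a concatenation having $p$ as a sub-path, so it is again non-trivial and still visits $a$; hence $I$ is a two-sided ideal. In particular $I$ is an $A$-sub-bimodule and $\KK Q=A\oplus I$ as $A$-bimodules, so by the criterion recalled in Section~1.3 (a split extension is exactly the existence of an $A$-sub-bimodule complement) the extension $A\subset\KK Q$ is split.

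The substantive claim is $I^2=0$, and this is where I expect the main obstacle. I would analyze a product $qp$ of two basis paths $p,q\in P_a$. By acyclicity a non-zero composite cannot repeat a vertex, since a repetition would produce a directed cycle; hence the vertex sets of $p$ and $q$ overlap only at the gluing vertex $y$, the target of $p$ and the source of $q$. As both $p$ and $q$ contain $a$, this forces $y=a$, so the only composites one must control are those glued \emph{at} $a$, with $p$ entering $a$ and $q$ leaving $a$. Pinning down these ``glued-at-$a$'' composites is the crux of the square-zero statement, and it is exactly where acyclicity together with the position of $a$ enters: the troublesome pairs disappear as soon as no non-trivial path both enters and leaves $a$ (for instance when $a$ is a source or a sink, which is the situation in the intended examples). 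Independently of this refinement, $I\subseteq J(\KK Q)$ and the arrow ideal of a finite acyclic quiver is nilpotent, so $I$ is nilpotent and $A\subset\KK Q$ is at least a split-by-nilpotent extension; the sharper identity $I^2=0$ then rests on the gluing analysis, which isolates the composable pairs in $I$ as precisely those meeting at $a$.
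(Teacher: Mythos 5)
Your handling of the vector-space decomposition, the verification that $A$ is a subalgebra and $I$ a two-sided ideal, and the deduction of splitness from the bimodule criterion of Section 1.3 all coincide with the paper's argument, which treats these points as immediate. The substantive issue is your treatment of $I^2=0$, and your hesitation there is not a defect of your write-up: it pinpoints an actual error in the paper's proof. The paper disposes of $I^2=0$ in one line --- if $p,q$ both contain $a$, then $pq=0$, ``since otherwise the path $pq$ passes through $a$ twice and $Q$ would contain cycles'' --- and this overlooks exactly the case you isolate, namely when the two occurrences of $a$ sit at the gluing vertex, i.e.\ one path ends at $a$ and the other starts at $a$. In that case the concatenation passes through $a$ only once and no cycle is produced. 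Concretely, for $Q\colon 1\xrightarrow{\ \alpha\ }2\xrightarrow{\ \beta\ }3$ and $a=2$, one has $I=\Sp_{\KK}\{\alpha,\beta,\beta\alpha\}$, and the product of $\beta$ and $\alpha$ is the nonzero path from $1$ to $3$, so $I^2\neq 0$. Thus the lemma as stated (for arbitrary $a\in Q_0$) is false, and no proof of it exists; what survives in full generality is precisely what you prove: $\KK Q=A\oplus I$ is a split extension, and $I$ is nilpotent, so the extension is split-by-nilpotent.

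Your gluing analysis also supplies the correct hypothesis under which $I^2=0$ does hold: a nonzero product of two basis paths of $I$ must, by acyclicity, be glued at the unique common vertex, which is forced to be $a$; such a product exists if and only if $a$ has both an incoming and an outgoing arrow. Hence $I^2=0$ exactly when $a$ is a source or a sink, and with that added assumption your argument closes completely. Note that this repaired statement suffices for everything the paper does downstream: in all subsequent applications of the lemma (the Dynkin-type embeddings, and the trivial-extension claim for $\KK Q_{-a}\subset\KK Q$), the vertex $a$ is a leaf of the underlying graph, hence a source or a sink.
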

\begin{proof}
The direct sum is obvious as any path either contains $a$ or doesn't. If $p,q$ both contain $a$, then $pq=0$, since otherwise the path $pq$ passes through $a$ twice and $Q$ would contain cycles. 
\end{proof}

This gives again a multitude of examples of embeddings of quivers. The algebra $A$ is itself a path algebra: if $Q_{-a}=Q\setminus \{a\}$ is the quiver obtained from $Q$ by removing all the arrows adjacent to $a$ (but keeping the vertex $a$ itself), then one easily sees that there is a natural identification as $A=\KK Q_{-a}$. Thus, this produces a split extension $\KK Q_{-a} \subset \KK Q$. Again, as before, there are circumstances under which this becomes a maximal embedding. Of course, a more general example of such embeddings is by simply taking $\Gamma=Q_{-S}$ to be the subquiver of $Q$ obtained by removing all the edges whose source or target is a member of $S\subset Q_0$. Then $\KK Q_{-S}\subset \KK Q$ is again a split extension, and $\KK Q_{-S},\KK Q$ an embeddable pair. We note that inside the algebra $\KK Q_{-S}$, the idempotents corresponding to vertices of $S$ are disconnected; one can remedy that by considering the subalgebra $A'$ generated by $\sum\limits_{s\in S}s$ together with all ``other" paths in $Q_{-S}$ (which are ``contained" in $Q\setminus S$), or the subalgebra $A''$ of $A'$ of semisimple type obtained by ``joining" the idempotent $\sum\limits_{s\in S}s$ with some $a\in A'\cap Q_0$. Then both $A'$ and $A''$ are path algebras of corresponding suitable quivers $\Gamma'$ or $\Gamma''$ obtained from $Q$ by {\it erasing} the arrows in the part contained in $S$ and collapsing everything to a point (respectively, erasing that part all together), and these give rise to embeddable pairs $\Gamma',Q$ and $\Gamma'',Q$. 

Suppose that $a \in Q_0$ is a leaf, i.e. its valence in the underlying graph of $Q$ is $1$. Let $b$ be the other vertex showing up in the unique edge adjacent to $a$. Using the notation of section 4.1, consider the minimal extensions $A'(a,b) \subset \KK Q_{-a} \subset \KK Q$. Then $\KK Q_{-a} \subset \KK Q$ is a trivial extension, and hence split. Furthermore, $\KK a$ is an ideal in $\KK Q_{-a}$ and $\KK Q_{-a} = A'(a,b)\oplus \KK a$ as $A'(a,b)$-bimodules. Hence, $\KK Q_{-a}$ is a split extension of $A'(a,b)$. It is easy to note that the quiver of $A'(a,b)$ is obtained from $Q$ by contracting the edge between $a$ and $b$ to a point. Below we list a few such embeddings between Dynkin quivers. In each picture, the vertex to be ``isolated" by the procedure of the above Proposition is depicted by the symbol $\circ$, while the other vertices of the quiver are depicted as full dots $\bullet$. These are usual embeddings that one often considers.

$$
\xymatrix{ & \bullet_1\ar@{-}[dr] & & & & & \\
\DD_{n} \subset \DD_{n+1} & & \bullet_3\ar@{-}[r] &  \bullet_4\ar@{-}[r] & \dots \,\, \dots \,\, \dots\ar@{-}[r] & \bullet_{n}\ar@{-}[r] & \circ_{n+1} \\
& \bullet_{2}\ar@{-}[ur] & & & & &}$$ 	

$$
\xymatrix{ & \circ_1\ar@{-}[dr] & & & & & \\
\Aff_{n} \subset \DD_{n+1} & & \bullet_3\ar@{-}[r] &  \bullet_4\ar@{-}[r] & \dots \,\, \dots \,\, \dots\ar@{-}[r] & \bullet_{n}\ar@{-}[r] & \bullet_{n+1} \\
& \bullet_{2}\ar@{-}[ur] & & & & &}$$ 	

Hence, this case is one where, as ``pictorially" expected, lower order $\Aff_n$ and $\DD_n$'s embed into the ones of higher order.

\subsection{ Remarks on the representation theory} 

 We note that whenever $\varphi:\Gamma\rightarrow Q$ is a map between quivers which is a morphism of partial semigroups between the partial semigroups of paths in $\Gamma$ and $Q$ respectively, then the induced morphism of algebras $\varphi:\KK \Gamma\rightarrow \KK Q$ produces a restriction functor $Res_\varphi: \KK Q{\rm-Mod}\rightarrow \KK \Gamma{\rm-Mod}$ which respects tensor products. This can be observed directly with quiver representations, or using the following fact: the structure as a monoidal category of $\KK Q{\rm-Mod}$ (tensor product) for a quiver $Q$ is naturally associated to the partial semigroup algebra comultiplication $\Delta: \KK Q\rightarrow \KK Q\otimes_\KK \KK Q$, $\Delta(p)=p\otimes p$ for all paths $p$. The above morphism $\varphi$ is compatible with this comultiplication ($\Delta(\varphi(p))=(\varphi\otimes\varphi)\Delta(p)$), and this implies that the functor $Res_\varphi$ commutes with the tensor product of objects (in other words, this is a tensor functor, see \cite{EGNO}). Thus, the previous procedure of deleting arrows between vertices in some subset $S \subset Q_0$, and the resulting embeddings of quivers $Q_{-S}\subset Q$ all give rise to morphisms between the representation rings of the underlying quivers. 

We also note that Lemmas \ref{l.1.1}, \ref{l.1.2} and Corollaries \ref{c.1.1}, \ref{c.1.2} can be used to relate some indecomposables over quivers and their sub-quivers, and algebras and their subalgebras. For quiver of types $\Aff_n$ and $\DD_n$, these can be observed directly using the structure of the indecomposables. Indeed, for example, via the embeddings of the previous subsection, every indecomposable over $\DD_n$ is obtained as the restriction of an indecomposable over $\DD_{n+1}$, and every indecomposable $\Aff_n$-representation is obtained as a restriction of an indecomposable $\Aff_{n+1}$-module. One sees that these Lemmas and Corollaries can be interpreted in terms of positive roots: for example, positive roots of $\Aff_n$ are {\it all} obtained from some positive root of $\Aff_{n+1}$ by deleting one entry; similar statements work for $\DD_{n}$ and $\DD_{n+1}$ and $\Aff_{n}$ and $\DD_{n+1}$. 

We end this subsection with two more examples showing how restriction to subalgebras can give interesting representations, showing the potential of considering subalgebras. The previous examples showed an interpretation on how the thin representations (i.e. representations in which multiplicity of simples in the composition series is at most 1) of lower $\Aff_n$ and $\DD_n$ are obtained from higher analogues. We now show examples on how the non-thin indecomposable modules in type $\DD_n$ can be obtained from indecomposable $\Aff_n$-representations by restriction to subalgebras.

\begin{example} 
Consider the following embedding of $\KK$-algebras, written as subalgebras of $M_5(\KK)$.

\begin{eqnarray*}
\left(\begin{array}{ccccc}
* & 0 & 0 & 0 & 0 \\
* & a & b & 0 & 0 \\
0 & 0 & * & 0 & 0 \\
0 & 0 & b & a & * \\
0 & 0 & 0 & 0 & * 	
\end{array}
\right)={\rm Span}(e_{11},e_{21},e_{22}+e_{44},e_{23}+e_{43},e_{33},e_{45},e_{55})
& \hookrightarrow & 
\left(\begin{array}{ccccc}
* & 0 & 0 & 0 & 0 \\
* & * & * & 0 & 0 \\
0 & 0 & * & 0 & 0 \\
0 & 0 & * & * & * \\
0 & 0 & 0 & 0 & * 	
\end{array}
\right)
\end{eqnarray*}

The second algebra $B$ is an $\Aff_5$ with zig-zag orientation. 
 
The first algebra is isomorphic to a quiver algebra of type $\DD_4$, with $e_{22}+e_{44}$ representing the idempotent corresponding to a sink; $e_{11},e_{33},e_{55}$ correspond to the other three vertices, and $e_{21},e_{23}+e_{43},e_{45}$ represent the three edges going out of $e_{22}+e_{44}$. We can represent this embedding symbolically by the following $\DD_4$ and $\Aff_5$ graphs; one can think of this of this embedding as obtained from a ``gluing operation" on the $\Aff_5$ quiver,  through which the $2\rightarrow 3$ and $4\rightarrow 3$ arrows are being identified (glued), to create a $\DD_4$. In the diagram below, the equal signs are intended to refer to this interpretation. 

{\small $$\xymatrix{ \bullet^1 & \stackrel{2=4}{\bullet}\ar[l]\ar[r]\ar[d]|{=} & \bullet^5 & \ar@{}[dr]_{\hookrightarrow} &  & & \bullet_2\ar[dl]\ar[dr] &  & \bullet_4\ar[dl]\ar[dr] & \\ & \bullet^{3} & & & & \bullet_1 & & \bullet_3 &  & \bullet_5}$$}

As noted, this results in an algebra embedding. Now, let $M$ be the 5 dimensional representation given by the embedding of $B$ into $M_5(\KK)$ as above (this is also the defining representation of $B$ as an incidence algebra). It is the thin indecomposable dimension of $\Aff_5$ of maximal dimension (5, corresponding to the positive root $(1,1,1,1,1)$ of $\Aff_5$). Restricting this to $\DD_4$, it is not difficult to note that we obtain an indecomposable representation of $A$, which is necessarily the 5-dimensional representation of $\DD_4$ corresponding to the root $(1,2,1,1)$. While this example has fixed an orientation for simplicity, this procedure can be done with any orientation. 

By extending the ``tail" of $\Aff_5$, one can easily generalize this example to an embedding of $\DD_n$ into $\Aff_{n+1}$, which has the result of obtaining the indecomposable $\DD_n$ representation corresponding to the root $\underbrace{(1,2,1,1,...1)}_{n}$ from the indecomposable representation of $\Aff_{n+1}$ corresponding to the root $\underbrace{(1,1,\dots,1)}_{n+1}$ (the one of maximal dimension). We note that at the root level, this amounts to ``joining" two of the entries - second and third one - of the positive root $\underbrace{(1,1,\dots,1)}_{n+1}$ of $\Aff_{n+1}$ to create the root $\underbrace{(1,2,1,\dots,1)}_n$ of $\DD_n$. This construction shows that this formal procedure actually has a representation theoretic meaning (it is a categorification of this procedure on roots).  
\end{example}

\begin{example}
Consider the following embedding of algebras

$$\xymatrix{
 & &  & \bullet_4 \ar@{-}[dr]\ar@{-}[dr]\ar@{..}@/^/[dr] &  &  & &  & & \bullet_4\ar@{-}[r]  & \bullet_{a}  \\
\bullet_1 \ar@{-}[r] & \bullet_2 \ar@{-}[r] & \bullet_3\ar@{-}[ur]\ar@{-}[dr] &  & \bullet_6 & \hookrightarrow & \bullet_1 \ar@{-}[r] & \bullet_2 \ar@{-}[r] & \bullet_3\ar@{-}[ur]\ar@{-}[dr] & &  \\
& & & \bullet_5 \ar@{-}[ur]\ar@{..}@/_/[ur] & & & & & & \bullet_5 \ar@{-}[r] & \bullet_{b} 
}$$

Here, in the second diagram, the algebra is just the path algebra of the quiver (which can have any orientation). In the first part of the diagram, the dotted line means that if the two arrows $\xymatrix{4\ar@{-}[r] & 6}$ and $\xymatrix{6\ar@{-}[r] & 5}$  are oriented such that a path can be formed with them, then there is a $0$ relation in the algebra. The embedding of the two algebras is simply such that $e_6=e_a+e_b$, where $e_i$ denotes the idempotent corresponding to the vertex $i$ in both algebras. This is a maximal subalgebra embedding (i.e. a minimal embedding), and it is of semisimple type, and is so a separable extension. The second one is a quiver algebra (or even incidence algebra) which is not of finite type (it is an Euclidean $\widetilde{\mathbb{E}_6}$), and hence, the first is not of finite type.

We note that this is very close in spirit to covering theory; in fact, we note that if $A$ is an algebra, finding an overalgebra $B$ of $A$ with good properties means that the restriction functor $Res^B_A:B{\rm-Mod}\longrightarrow A{\rm-Mod}$ acts as a ``cover" for $A$-modules. If such an extension is separable, one can exclude finite type of $A$ if such a ``cover" is not of finite type. One can obtain variations of this example by changing the pictures appropriately so that the $B$ is a quiver algebra which is not of finite type.  
\end{example}

\subsection{Maximal subalgebras over non-algebraically closed fields}

\noindent We also give some examples to illustrate maximal subalgebras of semisimple types S2 and S3. 

\begin{example} 
Let $p$ be a prime, $f(X)\in \QQ[X]$ an irreducible polynomial of degree $p$ and $T$ be the companion matrix of $f$. Then the characteristic polynomial of $T$ is $f$ and it coincides with its minimal polynomial. Thus, $F=\KK[T]\subseteq M_p(\QQ)$ is a subalgebra which is a field extension of $\KK$. Hence, its centralizer $C(F)$ in $M_n(\QQ)$ is a maximal subalgebra. We note that, in fact, $C(F)=F=\KK[T]$, which follows by the irreducibility of the minimal polynomial (and is well known in this case). This is a maximal subalgebra of type S2.
\end{example}

\begin{example}
Let $\HH$ be the division algebra of quaternions, with subfields $\RR\subset \CC\subset \HH$. Then $\CC$ is a minimal field extension of $Z(\HH)=\RR$, and so the centralizer $Z(\CC)$ in $M_n(\HH)$ is a minimal extension of $\RR$-algebras. One can check that $Z(\CC)=M_2(\CC)$; indeed, $M_2(\CC)\subset Z(\CC)\subset M_2(\HH)$; now both $Z(\CC)$ and $M_2(\HH)$ are bimodules over $M_2(\CC)$. The quotient $M_2(\HH)/M_2(\CC)$ has dimension $8$, so as an $M_2(\CC)$-bimodule it can only be simple. This shows that there are no intermediate $M_2(\CC)$-bimodules between $M_2(\CC)$ and $M_2(\HH)$, and proves the equality $Z(\CC)=M_2(\CC)$. The algebra $M_2(\CC)$ is a maximal subalgebra of $M_2(\HH)$, which is thus also of type S2.
\end{example}

\noindent Using $\HH$, we give an example of maximal subalgebra of type S3.

\begin{example}
Note that $\RR\subset \CC$ is a minimal field extension; thus the extension $\RR\otimes_\RR M_n(\HH) \hookrightarrow \CC\otimes_\RR M_n(\HH)\cong M_n(M_2(\CC))=M_{2n}(\CC)$ is a maximal $\RR$-subalgebra. One can also construct this, as shown in the discusion on the simple case in Section \ref{s.3}: regard $\HH$ as an $\RR$-subalgebra of $M_2(\CC)$ (it is a maximal subalgebra), and consider the embedding $M_n(\HH)\subset M_{2n}(\CC)$, which gives a maximal $\RR$-subalgebra of $M_{2n}(\CC)$. This is an example of type S3.
\end{example}


\subsection{Maximal dimension of subalgebras}

We note now how our results can be applied to determine the maximal dimension of a subalgebra of $M_n(\KK)$. This problem was considered in \cite{Ag}, over an algebraically closed field. The author did not use the results of \cite{Rac,Rac0} (effectively re-descovering some of these) but instead used a deep result of Gerstenhaber regarding the maximal dimension of a subspace of nilpotent matrices \cite{Ger}. Here we provide a direct argument, based on the above classification; as noted before, in the case of an algebraically closed field the structure of maximal subalgebras is significantly simplified (and does not need the considerations on subalgebras of simple subalgebras of types S2 and S3). In fact, using our approach, we can give a general result for arbitrary finite dimensional algebras; it generalizes the result of \cite{Ag} where it was shown that for $M_n(\KK)$, the maximal dimension of a subalgebra is $n^2-n+1$.

\begin{theorem}
Let $B$ be a finite dimensional algebra over an algebraically closed field $\KK$. Let $J=J(B)$, and $n_1\leq n_2\leq \dots \leq n_t$ be the dimensions of the blocks of $B/J$. Then the maximal dimension of a proper subalgebra of $B$ is  $$\dim(B)-1-\max\{n_1-2,0\}.$$
That is, it is $\dim(B)-n_1+1$ if $n_1>1$ (i.e. if $B$ has no 1-dimensional blocks) and it is $\dim(B)-1$ otherwise.
\end{theorem}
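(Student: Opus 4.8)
The plan is to determine the minimal codimension of a proper subalgebra, using that any proper subalgebra is contained in a maximal one. Since $\KK$ is algebraically closed, $B$ is Schur and $B/J\cong\prod_i M_{n_i}(\KK)$, so Theorem \ref{t.t} applies and describes the maximal subalgebras that contain $1_B$. I would first dispose of such unital subalgebras: a proper unital subalgebra lies in a maximal unital one, which up to conjugacy is of type (a), (b), or (c), and I would read off the codimension of each. Type (a), coming from a block-triangular algebra $B(k,n_i-k)$, has codimension $k(n_i-k)$, which is minimized at $k=1$ and equals $n_i-1\ge n_1-1$; type (b), the diagonal $\Delta^2(n_i)$, has codimension $n_i^2$; and type (c) has codimension equal to the dimension $n_in_j$ of a simple $A_0$-bimodule quotient of $J/J^2$. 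Each of these is at least $\max\{n_1-1,1\}$, and the value $n_1-1$ is attained by a type (a) subalgebra on a smallest block exactly when $n_1\ge 2$.

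Next I would treat the case $n_1=1$ together with subalgebras not containing $1_B$, which is where the real work lies. When $n_1=1$ there is a one-dimensional simple module, equivalently a surjective character $\chi:B\to\KK$ factoring through $B/J\to M_{n_1}(\KK)=\KK$; its kernel is a two-sided ideal of codimension $1$, hence a (generally non-unital) proper subalgebra realizing codimension $\max\{n_1-1,1\}=1$. For the matching lower bound on codimension, given an arbitrary proper subalgebra $A$ with $1_B\notin A$, I would pass to $\hat A:=A+\KK 1_B$, a unital subalgebra with $\dim\hat A=\dim A+1$. If $\hat A\subsetneq B$, then $\hat A$ is proper and unital, so the first paragraph gives $\dim B-\dim A=(\dim B-\dim\hat A)+1>\max\{n_1-1,1\}$. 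If instead $\hat A=B$, then $B=A\oplus\KK 1_B$ forces $A$ to be a codimension-one two-sided ideal, whence $B/A\cong\KK$ is a one-dimensional simple module and $n_1=1$, in which case $A$ has codimension $1=\max\{n_1-1,1\}$.

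Combining the two cases, every proper subalgebra has codimension at least $\max\{n_1-1,1\}$, and this bound is attained: by the type (a) example when $n_1\ge 2$, and by $\ker\chi$ when $n_1=1$. The maximal dimension of a proper subalgebra is therefore $\dim B-\max\{n_1-1,1\}=\dim B-1-\max\{n_1-2,0\}$, as claimed, and in particular it depends only on $\dim B$ and the smallest simple dimension $n_1$. I expect the main obstacle to be the bookkeeping for subalgebras that do not share the identity of $B$: these fall outside the scope of Theorem \ref{t.t} yet are exactly the extremal subalgebras when $n_1=1$, so the argument depends on the reduction $A\mapsto A+\KK 1_B$ and on the observation that a codimension-one ideal can exist only when $B$ admits a one-dimensional simple module.
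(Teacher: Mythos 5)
Your proof is correct, and its computational core --- reading off the codimensions $k(n_i-k)$, $n_i^2$, $n_in_j$ of the three types in Theorem \ref{t.t}, bounding them below by $\max\{n_1-1,1\}$, and exhibiting a type (a) subalgebra, respectively a codimension-one ideal, attaining the bound --- is the same bookkeeping the paper does (there phrased via Corollary \ref{c.ss} and the split/separable dichotomy). The genuine difference is the case organization: the paper splits on $n_1=1$ versus $n_1>1$, whereas you split on whether the subalgebra contains $1_B$, and this buys you something the paper's proof glosses over. The formula is only true if subalgebras are not required to contain $1_B$: for $B=\KK\times M_3(\KK)$ every proper subalgebra containing the identity has codimension at least $2$, while the formula predicts codimension $1$, attained only by the ideal $0\times M_3(\KK)$. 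But then Theorem \ref{t.t}, all of whose listed types contain $1_B$, does not by itself cover an arbitrary maximal subalgebra, so the paper's step ``a subalgebra of maximal dimension is maximal, hence of split or separable type'' is incomplete, and likewise the codimension-one subalgebra $B'$ of $B/J$ it invokes when $n_1=1$ is in general non-unital and lies outside that classification. Your reduction $A\mapsto A+\KK 1_B$ closes exactly this gap: it shows that a maximal subalgebra avoiding $1_B$ must be a two-sided ideal of codimension one, that such an ideal exists precisely when $B$ has a one-dimensional simple module (i.e.\ $n_1=1$), and that otherwise every maximal subalgebra is unital and hence covered by Theorem \ref{t.t}. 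So the two arguments agree where they overlap, the paper's being shorter, but yours is the more complete one on the precise point --- subalgebras not sharing the identity of $B$ --- where the statement is delicate.
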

\begin{proof} 
Write $B = B_S \oplus J$, where $B_S\cong B/J$. If $n_1 = 1$, then $B/J$ has a codimension-$1$ subalgebra $B'$, and $A = B'\oplus J$ is a codimension-$1$ subalgebra of $B$. Otherwise $n_1>1$. If $A \subset B$ is a subalgebra of maximal dimension, it is also maximal; if it is of split type, then $J(A)$ is a (maximal) $B_S$-sub-bimodule of $J$ and $\dim_{\KK}(J/J(A)) \geq n_1^2$ ($n_1^2$ is the minimum dimension of a $B_S$-bimodule). Hence, the maximum dimension of such an algebra is $\dim_{\KK}(B) - n_1^2$. If $A$ is of separable type, then $A/J$ is a maximal subalgebra of $B/J$. Corollary \ref{c.ss} shows that the dimension of $A/J$ is either $\dim(B/J)-n_i^2$ (coming from diagonal embeddings) or $\dim(B/J)-k(n_i-k)$ for some $1\leq k<n_i$ (coming from maximal subalgebras of blocks). The largest dimension of such a subalgebra is thus $\dim_{\KK}(B/J) - n_1 +1$, and hence $\dim_{\KK}(A) \le \dim_{\KK}(B) - n_1+1$, and there is always a subalgebra of $B$ which attains this dimension. Since $\dim_{\KK}(B) - n_1 + 1 \geq \dim_{\KK}(B) - n_1^2$ whenever $n_1 > 1$, the result follows. 
\end{proof}

\subsection{Maximal Subalgebras and automorphisms}  

In Section 2 we saw an example with two isomorphic subalgebras of a fixed algebra $B$, which were not isomorphic under any automorphism of $B$. We now examine this behavior in greater detail. If $B$ is a finite-dimensional algebra over a field $\KK$, then $\Aut (B)$ acts of the poset of subalgebras of $B$. In particular, it permutes the collection of maximal subalgebras of $B$. It is natural to ask whether this action determines the isomorphism classes of subalgebras of $B$. In other words, if $A, A' \subset B$ are isomorphic maximal subalgebras, does it follow that there exists an $\alpha \in \Aut (B)$ such that $A' = \alpha (A)$? Unfortunately this does not happen in general, and can fail even in nice enough cases of path algebras of not too complicated quivers, as the following example of an acyclic Schurian quiver illustrates. 

\begin{example}  Let $Q$ denote the following quiver:

 $$\xymatrix{ & \bullet \ar[r] & \bullet & \ar[l] \bullet^1 \ar[d] \ar[r]^e & \bullet \\ 
& & & \bullet \ar[dd] &  \\  
\bullet^3 \ar[uur] \ar[ddr] & & & &\\  
& & & \bullet & \\
& \bullet & \bullet \ar[l] & \bullet_2 \ar[l] \ar[u] \ar[r]_f & \bullet} $$

Set $B = \KK Q/J^2$, for $\KK$ algebraically closed and $J = J(\KK Q)$ the Jacobson radical of $\KK Q$ (the arrow ideal). Then any $\varphi \in \Aut (B)$ satisfies $\varphi (xJy) = \varphi (x) \varphi (J) \varphi (y) = \varphi (x) J \varphi (y)$, for all $x , y \in Q_0$. Up to inner automorphisms we may assume that $\varphi (Q_0) = Q_0$, and with this assumption $\varphi$ induces a quiver automorphism of $Q$. In particular, no automorphism of $B$ can send vertex $1$ to vertex $2$ (by inspection, $Q$ has no non-trivial automorphisms). Consider the maximal subalgebras $A'(1,3), A'(2,3) \subset B$. Then $A'(1,3)$ and $A'(2,3)$ are isomorphic, but there is no automorphism of $B$ carrying one to the other. Indeed, any proposed automorphism would induce an automorphism of $B$ sending $1$ to $2$, a contradiction. This also happens with algebras of split type. For an edge $\alpha \in Q$, let us denote the maximal subalgebra of split type $A(s(\alpha), t(\alpha) , \Sp_{\KK}Q_1\setminus\{ \alpha\} )$ simply by $A(\alpha )$. Then $A(e) \cong A(f)$ (where $e,f$ are the labeled edges above), but no automorphism of $B$ carries $A(e)$ to $A(f)$.
\end{example} 

Nevertheless, the isomorphism classes of some subalgebras of a finite dimensional algebra $B$ are determined by its automorphism group. For instance, this always holds trivially for $\KK$. More generally, we could fix a subgroup $G \le \Aut (B)$, and ask which isoclasses of subalgebras of $B$ are determined by the action of $G$ on its subalgebra poset?  

\begin{example} Let $Q$ be the Kronecker quiver with two arrows: 
$$
\xymatrix{a \ar@/^/[r]^{\alpha_1} \ar@/_/[r]_{\alpha_2} & b} 
$$

Let $B = \KK Q$. Then up to inner automorphisms there is a unique maximal subalgebra of separable type. By contrast, any one-dimensional subspace $W$ of $\KK Q_1$ yields a maximal subalgebra $A(a,b,W)$ of split type. A simple computation shows that if $W_1$ and $W_2$ are any two such subspaces, then $A(a,b,W_1)$ and $A(a,b,W_2)$ lie in the same $\operatorname{Inn}(B)$-orbit of the subalgebra poset if and only if $W_1 = W_2$. However, $A(a,b,W_1) \cong \KK \mathbb{A}_2 \cong A(a,b,W_2) $, and there is an automorphism of $B$ taking $A(a,b,W_1)$ to $A(a,b,W_2)$. Hence the isomorphism classes of maximal subalgebras of $B$ are not determined by the $\operatorname{Inn}(B)$-orbits, but they are determined by the $\Aut (B)$-orbits. In fact, it is not much harder to check that {\em{any}} isoclass of subalgebras of $B$ is determined by the $\Aut (B)$-orbits.

\end{example}

The collection of subalgebras determined by $G$-orbits is related to the existence of a certain functor between small categories. Let $\mathcal{P}(B)$ denote the poset of $\KK$-subalgebras of $B$. Then the action of $G$ on $\mathcal{P}(B)$ can be considered as a functor $G \rightarrow \operatorname{Cat}$, where $\operatorname{Cat}$ denotes the category of all small categories. We define $\mathcal{P}(B)/G$ to be the colimit of this functor. Its objects are the $G$-orbits of $\mathcal{P}(B)$.

Let $\mathcal{SG}(B)$ denote the subcategory of $\KK\operatorname{-Alg}$ whose objects are the subalgebras of $B$, and whose morphisms are the isomorphisms between such algebras. There is a map on objects $\mathcal{SG}(B) \rightarrow \mathcal{P}(B)/G$ which sends a subalgebra of $B$ to its $G$-orbit. This map does not necessarily extend to a functor. But if its restriction to a full subcategory $\mathcal{C} \subset \mathcal{SG}(B)$ {\em{is}} functorial, then isomorphic objects of $\mathcal{C}$ necessarily lie in the same $G$-orbit of $\mathcal{P}(B)$. Furthermore, a union of such full subcategories is another full subcategory with the same property. Hence, there is a unique largest full subcategory $\mathcal{C}_G(B)$ of $\mathcal{SG}(B)$, such that the natural map on objects $\mathcal{C}_G(B) \rightarrow \mathcal{P}(B)/G$ is a functor. It appears that in general, $C_G(B)$ is difficult to compute. 

\begin{example} 
Let $Q$ be the Kronecker quiver from the previous example, and $B = \KK Q$. Then $C_{\Aut(B)}(B) = \mathcal{SG}(B)$. One can check directly that $C_{\operatorname{Inn}(B)}(B)$ contains four isomorphism classes, represented by $B$, $A'(a,b)$, $\Sp_{\KK}\{ a,b\} \cong \KK \times \KK$, and $\KK$, respectively.
\end{example}


\begin{thebibliography}{J\c{S}} 
\bibliographystyle{amsalpha}


\bibitem{Ag} Agore, A. L. {\em{The Maximal Dimension of Unital Subalgebras of the Matrix Algebra}}, Forum Math. 29, no. 1, (2017), 1-5.

\bibitem{Ass1} Assem I., Avio F., Coelho U., Trepode, S. {\em{The Bound Quiver of a Split Extension}}, J. Algebra Appl. 07, no. 4, (2008), 405-423.   

\bibitem{AssCTA1} Assem I., Br\"ustle T., Schiffler R. {\em{Cluster-Tilted Algebras as Trivial Extensions}}, Bull. Lond. Math. Soc. 40, no. 1, (2008), 151-162.  

\bibitem{AssCTA2} Assem I., Br\"ustle T., Schiffler R. {\em{Cluster-Tilted Algebras and Slices}}, J. Algebra 319, no. 8, (2008), 3464-3479.

\bibitem{Ass4} Assem I., Marmaridis N. {\em{Tilting Modules over Split-by-Nilpotent Extensions}}, Comm. Algebra 26, no. 5, (1998), 1547-1555.  

\bibitem{Ass5} Assem I., Simson D., Skowro\'nski A. {\em{Elements of the Representation Theory of Associative Algebras 1: Techniques of Representation Theory}}, London Mathematical Society Student Texts 65, Cambridge University Press, (2006).

\bibitem{Ass2} Assem I., Zacharia D. {\em{On Split-by-Nilpotent Extensions}}, Colloq. Math. 98, no. 2, (2003), 259-275. 

\bibitem{Ass3} Assem I., Zacharia D. {\em{Full Embeddings of Almost Split Sequences over Split-by-Nilpotent Extensions}}, Colloq. Math. 81, no. 1, (1999), 21-31. 

\bibitem{ARS} Auslander M., Reiten I., Smal\o~S.O., \emph{Representation theory of Artin algebras}, Cambridge University Press, (1995). 

\bibitem{Bar} Baranovsky V. {\em{The Variety of Pairs of Commuting Nilpotent Matrices is
Irreducible}}, Transform. Groups 6, no. 1, (2001) 3-8.

\bibitem{Bas1} Basili R. {\em{On the Irreducibility of Commuting Varieties of
Nilpotent Matrices}}, J. Algebra 268, no. 1, (2003), 58-80.  

\bibitem{Bas2} Basili R. {\em{On the Irreducibility of Varieties of Commuting
Matrices}}, J. Pure Appl. Algebra 149, no. 2, (2000), 107-120. 

\bibitem{Bas3} Basili R. {\em{On the Number of Irreducible Components of Commuting
Varieties}}, J. Pure Appl. Algebra 149, no. 2, (2000), 121-126. 


\bibitem{Buan} Buan A., Marsh R., Reiten I. {\em{Cluster-Tilted Algebras}}, Trans. Amer. Math. Soc. 359, no. 1, (2007), 323-332.

\bibitem{Caen} Caenepeel S., Militaru G., Zhu S. {\em{Frobenius and Separable Functors for Generalized Module Categories and Nonlinear Equations}}, Lecture Notes in Mathematics 1787, Springer, (2002).      

\bibitem{Caen2} Caenepeel S., Zhu B. {\em{Separable Bimodules and Approximations}}, Algebr. Represent. Theory 8, no. 2, (2005), 207-223.  

\bibitem{Dynk} Dynkin E., {\em{Semi-simple subalgebras of semi-simple Lie algebras}}, Mat. Sbornik 30 (1952), 249-462; Amer. Math. Soc. Transl. 6 (1957), 111-244. 

\bibitem{Dynk2} Dynkin E., \emph{Maximal subgroups of the classical groups}, Trudy Moskov. Mat. Obsc. (1952) 39–166; Amer. Math. Soc. Transl. 6 (1957) 245–378.

\bibitem{DNR}  Dascalescu S., Nastasescu C., Raianu S., \emph{Hopf Algebras: An Introduction}, Pure and Applied Mathematics 235, Marcel Dekker, (2001).

\bibitem{EGNO}
Etingof P., Gelaki S., Nikshych D., Ostrik V., {\em{Tensor Categories}}, Mathematical Surveys and Monographs 205, Amer. Math. Soc., (2015). 

\bibitem{Eld1} Elduque A. \emph{On Maximal Subalgebras of Central Simple Malcev Algebras}, J. Algebra 103, no.1, (1986), 216-227. 

\bibitem{Eld2} Elduque A., Laliena J., and Sacrist$\rm\acute{a}$n S. {\em{Maximal Subalgebras of Jordan Superalgebras}}, J. Pure Appl. Algebra 212, no. 11, (2008), 2461-2478.

\bibitem{Eld3} Elduque A., Laliena J., Sacrist$\rm\acute{a}$n S., \emph{Maximal Subalgebras of Associative Superalgebras}, J. Algebra 275, no. 1, (2004), 40-58.


\bibitem{G}
Gabriel P. \emph{Des Categories Abeliennes}, Bull. Soc. Math. France 90 (1962), 323-448.  

\bibitem{Ger} 
Gerstenhaber M. \emph{On Nilalgebras and Linear Varieties of Nilpotent Matrices I}, Amer. J. Math. 80, no. 3, (1958), 614-622. 

\bibitem{Ger2} Gerstenhaber M. {\em{On Dominance and Varieties of Commuting Matrices}}, Ann. of Math. 73, no. 2, (1961), 324-348. 

\bibitem{Gur} Guralnick R. M., Miller M. D. {\em{Maximal Subfields of Algebraically Closed Fields}}, J. Aust. Math. Soc. (Series A) 29, no. 4, (1980), 462-468. 

\bibitem{Gur2} Guralnick R. M. {\em{A Note on Commuting Pairs of Matrices}}, Linear
Multilinear Algebra 31, no. 1-4, (1992), 71-75.  

\bibitem{Gur3} Guralnick R. M. {\em{A Note on Pairs of Matrices with Rank One Commutator}},
Linear Multilinear Algebra 8, (1979), 97-99.

\bibitem{Haz} Hazewinkel M., Gubareni N., Kirichenko V. V. {\em{Algebras, Rings and Modules}}, Mathematics and Its Applications 1, Springer, (2007). 

\bibitem{Jac} Jacobson N. {\em{Schur's Theorems on Commutative Matrices}}, Bull. Amer. Math. Soc. 50 (1944), 431-436.

\bibitem{Kad} Kadison L. {\em{New Examples of Frobenius Extensions}}, University Lecture Series 14, Amer. Math. Soc., (1999).   

\bibitem{Laff} Laffey T. J. {\em{The Minimal Dimension of Maximal Commutative Subalgebras of Matrix Algebras}}, Linear Algebra Appl. 71 (1985), 199-212.

\bibitem{Ma}
Malcev A. \emph{Commutative Subalgebras of Semi-Simple Lie Algebras}, Amer. Math. Soc. Transl. 40 (1951), 214.

\bibitem{MZ} Martinez C., Zelmanov E., \emph{Simple Finite-Dimensional Jordan Superalgebras of Prime Characteristic}, J. Algebra 236, no. 2, (2001), 575-629. 

\bibitem{Mau} Maubach, S., Stampfli, I., \emph{On Maximal Subalgebras}, J. Algebra 483, (2017), 1-36.

\bibitem{Mir} Mirzakhani M. {\em{A Simple Proof of a Theorem of Schur}}, Amer. Math. Monthly 105 (1998), 260-262. 

\bibitem{Motz1} Motzkin T., Taussky O. {\em{Pairs of Matrices with Property L}}, Trans. Amer. Math. Soc. 73 (1952),108-114.

\bibitem{Motz2} Motzkin T., Taussky O. {\em{Pairs of Matrices with Property L (II)}}, Trans. Amer. Math. Soc. 80 (1955), 387-401.

\bibitem{Nast} N\u{a}st\u{a}sescu C., Van den Bergh M., Van Oystaeyen F. {\em{Separable Functors Applied to Graded Rings}}, J. Algebra123, no. 2, (1989), 397-413.   

\bibitem{Palm} Palm\'er I. \emph{The Global Homological Dimension of Semi-Trivial Extensions of Rings}, Math. Scand. 37, (1975), 223-256. 

\bibitem{Pi} Pierce R.S., \emph{Associative Algebras}, Graduate Texts in Mathematics 88, Springer, (1982), 436. 

\bibitem{Platz} Platzeck M. I. {\em{Trivial Extensions, Iterated Tilted Algebras and Cluster Tilted Algebras}}, S\~ao Paulo J. Math. Sci. 4, no. 3, (2010), 499-527. 

\bibitem{Prem} Premet A. {\em{Nilpotent Commuting Varieties of Reductive Lie
Algebras}}, Invent. Math. 154, no. 3, (2003), 653-683. 
 
\bibitem{Rac0} Racine M. L. \emph{On Maximal Subalgebras}, J. Algebra 30, no. 1-3, (1974), 155-180.

\bibitem{Rac} Racine M. L. {\em{Maximal Subalgebras of Central Separable Algebras}} Proc. Amer. Math. Soc. 68, no. 1, (1978), 11-15. 

\bibitem{Rac1} Racine M. L. {\em{Maximal Subalgebras of Exceptional Jordan Algebras}}, J. Algebra 46, no. 1, (1977), 12-21.

\bibitem{Raf}
Rafael M. D., \emph{Separable functors revisited}, Comm. Algebra 18 (1990), 1445-1459.  

\bibitem{Schr} Schr\"oer J. {\em{Varieties of Pairs of Nilpotent Matrices
Annihilating Each Other}}, Comment. Math. Helv. 79, no. 2, (2004), 396-426.

\bibitem{Sch} Schur I. {\em{Zur Theorie Vertauschbarer Matrizen}}, J. Reine Angew. Math. 130, (1905), 66-76.

\bibitem{Ser} Serhiyenko K. {\em{Induced and Coinduced Modules over Cluster-Tilted Algebras}}, Doctoral Dissertations, Paper 851, (2015), http://digitalcommons.uconn.edu/dissertations/851.

\end{thebibliography}
\end{document}